\documentclass[12pt]{amsart}

\usepackage{hyperref}
\usepackage{amssymb, latexsym, amsthm, enumerate, mathptmx}

\usepackage{tikz}
\usetikzlibrary{arrows}
\usepackage[all]{xy}
\usepackage{color}
\usepackage{cleveref}
\usepackage{mathtools}
\usepackage{paralist}
\usepackage{comment}

\textwidth=15 cm
\textheight=22 cm
\topmargin=0.5 cm
\oddsidemargin=0.5 cm
\evensidemargin=0.5 cm
\footskip=40 pt

\newtheorem{thm}{Theorem}[section]
\newtheorem{lem}[thm]{Lemma}
\newtheorem{cor}[thm]{Corollary}
\newtheorem{prop}[thm]{Proposition}
\newtheorem{conj}[thm]{Conjecture}

\theoremstyle{definition} 
\newtheorem{defn}[thm]{Definition} 
\newtheorem{remark}[thm]{Remark}
\newtheorem{example}[thm]{Example}

\DeclareMathOperator{\Sym}{Sym}
\DeclareMathOperator{\Inc}{Inc}
\DeclareMathOperator{\Tor}{Tor}
\DeclareMathOperator{\reg}{reg}
\DeclareMathOperator{\ind}{ind}
\DeclareMathOperator{\ini}{in}
\DeclareMathOperator{\gin}{gin}
\DeclareMathOperator{\rev}{rev}

\def\Icc{{\mathcal I}}
\def\Jcc{{\mathcal J}}

\newcommand{\N}{\mathbb{N}}
\newcommand{\Z}{\mathbb{Z}}
\newcommand{\be}{{\bf e}}

\title{Castelnuovo-Mumford regularity up to symmetry}

\author{Dinh Van Le}
\address{Institut f\"ur Mathematik, Universit\"at Osnabr\"uck, 49069 Osnabr\"uck, Germany}
\email{dlevan@uos.de}

\author{Uwe Nagel}
\address{Department of Mathematics, University of Kentucky, 715 Patterson office tower, Lexington, KY 40506-0027, USA}
\email{uwe.nagel@uky.edu}

\author{Hop D. Nguyen}
\address{Institute of Mathematics, Vietnam Academy of Science and Technology, 18 Hoang Quoc Viet, 10307 Hanoi, Vietnam}
\email{ngdhop@gmail.com}

\author{Tim R\"{o}mer}
\address{Institut f\"ur Mathematik, Universit\"at Osnabr\"uck, 49069 Osnabr\"uck, Germany}
\email{troemer@uos.de}

\begin{document}

\begin{abstract}
We study the asymptotic behavior of the Castelnuovo-Mumford regularity along chains of graded ideals in increasingly larger polynomial rings that are invariant under the action of symmetric groups. A linear upper bound for the regularity of such ideals is established. We conjecture that their regularity grows eventually precisely linearly. We establish this conjecture in several cases, most notably when the ideals are Artinian or squarefree monomial.
\end{abstract}

\keywords{Invariant ideal, monoid, polynomial ring, symmetric group}
\subjclass[2010]{13A50, 13D02, 13F20, 16P70, 16W22}

\maketitle

\section{Introduction}

Chains of ideals in increasingly larger polynomial rings possessing certain symmetries arise naturally in various areas of mathematics, including
algebraic chemistry \cite{AH07, Dr10},
algebraic statistics and toric algebra \cite{BD11,Dr14,DEKL15,DK14,HM13,HS07,SS03},
and group theory \cite{Co67}.

A newly established technique to deal with such a chain is to pass to a non-Noetherian limit of the chain. Typically, this leads to the study of ideals in a polynomial ring $K[X]=K[x_{k,j}\mid 1\le k\le c, j\ge 1]$ ($c\in \N$) over a field $K$ that are invariant under the action of some large monoid, such as $\Sym(\infty)=\bigcup_{n\ge1}\Sym(n)$, where $\Sym(n)$ is the symmetric group on $\{1,\dots,n\}$. The action is induced by $\sigma \cdot x_{k,j}=x_{k,\sigma(j)}$ for $\sigma\in\Sym(\infty)$. With this action any $\Sym(\infty)$-invariant ideal $I$ can also be described by a $\Sym(\infty)$-invariant chain of ideals
\[
I_1\subseteq I_2\subseteq\cdots\subseteq I_n\subseteq\cdots,
\]
in which each truncation $I_n=I\cap K[X_n]$ is an ideal in the Noetherian polynomial ring
$$
 K[X_n]=K[x_{k,j}\mid 1\le k\le c, 1\le j\le n]
$$
that satisfies $\Sym(n)(I_m)\subseteq I_n$ for all $m\le n$.
Recently, significant advances have been made in this new research direction. For instance, Cohen \cite{Co67, Co87} (see also Aschenbrenner-Hillar \cite{AH07} and Hillar-Sullivant \cite{HS12}) proved that $K[X]$ is $\Sym(\infty)$-Noetherian, i.e., any $\Sym(\infty)$-invariant ideal is generated by finitely many $\Sym(\infty)$-orbits. This result provides a framework as well as a motivation for further studies of  properties of $\Sym(\infty)$-invariant ideals.
In \cite{NR17}, the second and fourth author defined Hilbert series for such ideals and showed the rationality of these series. They obtained in fact a rather explicit formula for the Hilbert series, which enabled them to estimate the asymptotic behavior of  dimensions and degrees of ideals in $\Sym(\infty)$-invariant chains. Their subsequent work \cite{NR17+} extends the aforementioned result of Cohen to the setting of FI-modules (see \cite{CEF} for more details on FI-modules), yielding the stabilization of syzygies in any fixed homological degree of FI-modules. In particular, this applies to any  $\Sym(\infty)$-invariant chain of ideals as discussed above.
For related results the reader may also consult \cite{SaS,Sn}.

In this paper, we study the asymptotic behavior of the Castelnuovo-Mumford regularity of ideals in $\Sym(\infty)$-invariant chains. Recall that the \emph{Castelnuovo-Mumford regularity} (or \emph{regularity} for short) of a finitely generated graded module $M$ over the standard graded polynomial ring $S=K[y_1,\dots,y_m]$ is defined to be
\[
\reg(M)=\max\{j-i\mid \Tor^S_i(K,M)_j\neq 0 \}.
\]
See, e.g., \cite{Ei95,Ei05} for detailed discussions on this important invariant and the role that it plays in algebraic geometry and commutative algebra.

Our first main result (see Theorem \ref{thm_bounding_reg_monomial}) implies that for any $\Sym(\infty)$-invariant chain $(I_n)_{n\ge1}$, $\reg(I_n)$ is eventually bounded above by a linear function, that is, there are numbers $C$ and $D$ such that
\[
\reg(I_n)\le Cn+D\quad \text{for all }\ n\gg 0.
\]
In fact, using a result in \cite{HT} it is not too difficult to show that there is an upper linear bound (see Remark \ref{rem:hoa-trung}). Our contribution is that
we establish a  much better and rather sharp bound for $\reg(\ini_{\le}(I_n))$, where $\le$ is a suitable monomial order on $K[X]$. Note that the chain $(\ini_{\le}(I_n))_{n\ge 1}$ is not $\Sym(\infty)$-invariant in general, but it is invariant under the action of the monoid $\Inc(\N)^i$ of increasing functions on $\N$ for any integer $i\ge 0$ (see Section \ref{sec2} for the precise definition). We will therefore work in a more general setting of $\Inc(\N)^i$-invariant chains.

When the ring $K[X]$ has only one row of variables (that is, $c=1$), we are led to several interesting consequences. For instance,  if $I$ is an $\Inc(\N)$-invariant monomial ideal in $K[X]$ that contains at least one squarefree monomial, e.g., a squarefree monomial ideal, then $\reg(I_n)$ is eventually constant, where $I_n=I\cap K[X_n]$ for all $n\ge 1$ (see Proposition \ref{c=1_constant}).

The mentioned results suggest the following expectation:
\begin{conj}
 \label{conj}
 Let $i\ge 0$ be an integer and $(I_n)_{n\ge 1}$ an $\Inc(\N)^i$-invariant chain of graded ideals. Then $\reg(I_n)$ is eventually a linear function, that is,
\[
\reg(I_n)=Cn+D \quad \text{for some integer constants } C,\; D\ \text{ whenever }\ n\gg 0.
\]
\end{conj}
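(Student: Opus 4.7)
The plan is to reduce the conjecture to the monomial case via Gröbner degeneration, and then analyze $\Inc(\N)^i$-invariant monomial chains through their lcm-lattices.

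First I would invoke the monomial order $\le$ from Theorem~\ref{thm_bounding_reg_monomial}, under which $(\ini_{\le}(I_n))_{n\ge 1}$ is $\Inc(\N)^i$-invariant. Since $\reg(I_n)\le\reg(\ini_{\le}(I_n))$, eventual linearity for the initial chain would at least deliver an eventually linear upper bound for $\reg(I_n)$. The cases mentioned in the paper (Artinian, squarefree monomial) together with Proposition~\ref{c=1_constant} indicate that the monomial case itself is tractable by combinatorial methods, which I would try to push to the general monomial setting.

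So suppose $J=\bigcup J_n$ is an arbitrary $\Inc(\N)^i$-invariant monomial ideal. Its minimal generators form finitely many orbits $\Inc(\N)^i\cdot u_j$ for seed monomials $u_1,\dots,u_s$, so the minimal generators of $J_n$ are precisely the orbit elements that lie in $K[X_n]$. Using the Gasharov--Peeva--Welker description of $\Tor^{K[X_n]}_i(K,J_n)_j$ in terms of reduced homology of open intervals in the lcm-lattice $L_n$ of these generators, I would try to show that $L_n$ is built from a fixed combinatorial template repeated in an increasingly parallel fashion as $n$ grows. This should imply that both the projective dimension and the top bidegree of nonvanishing Tor grow linearly with stabilizing slopes, yielding eventual linearity of $\reg(J_n)$.

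Two obstacles stand out. First, even in the monomial case, a matching linear lower bound requires exhibiting persistent nonzero Tor classes: one must produce a combinatorial subconfiguration of shifted generators whose corresponding open interval in $L_n$ has nonvanishing top reduced homology in a degree growing linearly in $n$. Second, and more serious, the inequality $\reg(I_n)\le\reg(\ini_{\le}(I_n))$ may be strict, so transferring eventual linearity from the monomial case back to $(I_n)$ is not automatic. To handle this I would attempt to construct a $\Sym(\infty)$-equivariant analogue of the generic initial ideal, ensuring that some $\gin(I_n)$ is simultaneously $\Inc(\N)^i$-invariant and preserves regularity. Failing that, the lower bound for $\reg(I_n)$ would have to be established directly, perhaps using the stabilization of FI-module syzygies from \cite{NR17+} to pin down linearly growing Betti numbers in each homological degree and then controlling the maximum over all degrees via a separate argument on the shape of the Betti table.
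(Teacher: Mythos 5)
The statement you are attempting to prove is Conjecture~\ref{conj}, which the paper leaves open in general: it is established only for special classes of chains (extremal chains in Theorem~\ref{thm_extremal}, chains that are eventually Artinian in Corollary~\ref{thm: artinian chains}, saturated squarefree monomial chains in Proposition~\ref{c=1_constant}, and single-orbit monomial chains in Proposition~\ref{mon_orbit}). Your proposal is therefore a sketch of an attack on an open problem rather than a reconstruction of a proof, and it contains genuine gaps that you have partly identified yourself.

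One of your proposed escape routes, however, is actively ruled out by the paper. You suggest constructing a $\Sym(\infty)$-equivariant generic initial ideal so that $(\gin(I_n))_{n\ge1}$ is both $\Inc(\N)^i$-invariant and regularity-preserving, thereby transferring the monomial case back to $(I_n)$. Proposition~\ref{generic} shows that if $(\gin_{\le}(I_n))_{n\ge1}$ is $\Inc(\N)^i$-invariant then $(\reg(I_n))_{n\ge1}$ must be \emph{bounded}; combined with Example~\ref{artinian}, where an Artinian $\Inc(\N)$-invariant chain has $\reg(I_n)$ growing with positive slope, this means the generic initial chain you want cannot exist in general. The other two ingredients you lean on --- a linear \emph{lower} bound from persistent nonvanishing Tor classes in the lcm-lattice, and the deduction of the graded case from the monomial case --- are exactly the open difficulties: the inequality $\reg(I_n)\le\reg(\ini_{\le}(I_n))$ can be strict, and no control of the Betti table beyond a single homological degree (which is what the FI-module stabilization of \cite{NR17+} provides) is available. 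The paper sidesteps all of this by an entirely different mechanism: the recursive colon-ideal formula (Lemma~\ref{cor:bdl} and Corollary~\ref{cor:repeat bdl}) together with the weight invariants of Section~\ref{sec3} and the extremality framework of Definition~\ref{extremal}, and even this only yields the conjecture in the special cases listed above. Your proposal does not close the conjecture.
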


This conjecture is reminiscent of the well-known asymptotic behavior of the regularity of powers of a graded ideal in a Noetherian polynomial ring \cite{CHT,K}. Note, however, that the methods used in \cite{CHT,K}, which are based on the Noetherianity of the Rees algebra, do not apply to our situation. 

The second main result of the paper establishes Conjecture~\ref{conj} for families of chains that are extremal in a certain sense (see Theorem \ref{thm_extremal}). As an application, we show that this conjecture is true for invariant chains whose members are eventually Artinian (see Corollary \ref{thm: artinian chains}).

It is worth mentioning that our study of the regularity of $\Inc(\N)^i$-invariant chains has an immediate, yet rather surprising implication for the existence of generic initial chains of such chains. Generic initial chains of $\Inc(\N)^i$-invariant chains, if existed, would be a very useful tool for studying equivariant Hilbert series and other invariants of interest. However, as we will show, they do not exist in general (see Proposition \ref{generic}). 

Many of the techniques developed in this paper are not specific to the Castelnuovo-Mumford regularity. Potentially, they could be used to analyze other invariants. Indeed, for the codimension and projective dimension this has been carried out in \cite{LNNR}. 

The paper is organized as follows. In the next section we set up notation and review some basic facts concerning $\Inc(\N)^i$-invariant chains. \Cref{sec3} is devoted to defining some invariants that are used to provide a sharp upper bound on the growth of regularity along such chains. This bound and its consequences are given in \Cref{sec4}, while technical proofs are postponed until \Cref{sec5}. In \Cref{sec6} we verify Conjecture~\ref{conj} for extremal chains, and particularly, for chains whose ideals are eventually Artinian. Finally, the existence of generic initial chains of $\Inc(\N)^i$-invariant chains is discussed in \Cref{sec7}.

{\bf Acknowledgements.}
 We would like to thank the anonymous referees for insightful comments and suggestions that helped us to significantly improve the quality and clarity of the paper. The second author was partially supported by Simons Foundation grant $\#$317096. The third author was partially supported by Project CT 0000.03/19-21 of Vietnam Academy of Science and Technology (VAST); he acknowledges the support of Project ICRTM01$\_$2019.01 of the International Centre for Research and Postgraduate Training in Mathematics (ICRTM), Institute of Mathematics, VAST.


\section{Invariant chains of ideals}\label{sec2}

In this section we recall relevant notions and basic facts concerning invariant chains of ideals. 
Let $\N$ denote the set of positive integers. For any $n\in \N$, set $[n]=\{1,\dots,n\}$. In the sequel we fix a field $K$ and a positive integer $c\in \N$. Consider the following sets of indeterminates
\[
 \begin{aligned}
  X_n&=\{x_{k,j}\mid  k\in [c], j\in [n]\}\quad \text{for }n \ge 1,\\
  X&=\bigcup_{n\ge1}X_n=\{x_{k,j}\mid k\in [c], j\in \N\}.
 \end{aligned}
\]
Denote by $R_n=K[X_n]$ and $R=K[X]$ the polynomial rings over the field $K$ with indeterminates $X_n$ and $X$, respectively. Thus, we have a chain
\[
R_1\subseteq R_2\subseteq\cdots\subseteq R_n\subseteq\cdots
\]
of Noetherian subrings of the non-Noetherian ring $R$.

For $n \ge 1$, let $\Sym(n)$ be the symmetric group on $[n]$. Then $\Sym(n)$ can be regarded as the subgroup of $\Sym(n+1)$ that fixes $n+1$. Let $\Sym(\infty)=\bigcup_{n\ge1}\Sym(n)$ be the group of all finite permutations of $\N$, i.e., permutations that fix all but finitely many elements of $\N$. Recall that $\Sym(\infty)$ acts on $R$ by permuting the second index of the variables:
\[
 \sigma \cdot x_{k,j}=x_{k,\sigma(j)} \quad\text{for }\ \sigma\in\Sym(\infty),\ 1\le k\le c,\ j\ge 1.
\]
This action induces an action of $\Sym(n)$ on $R_n$ for every $n \ge 1$.

An ideal $I\subseteq R$ is said to be \emph{$\Sym(\infty)$-invariant} if $\sigma(f)\in I$ for all $f\in I$ and $\sigma\in\Sym(\infty)$. Any {$\Sym(\infty)$-invariant} ideal gives rise to a $\Sym(\infty)$-invariant chain, and vice versa. By a \emph{$\Sym(\infty)$-invariant chain} we mean a sequence $(I_n)_{n\ge 1}$ of ideals $I_n\subseteq R_n$ satisfying
\[
 \Sym(n)(I_m)=\{\sigma(f)\mid f\in I_m,\ \sigma\in\Sym(n)\}\subseteq I_n\quad \text{whenever }\ m\le n.
\]
It is clear that for any {$\Sym(\infty)$-invariant chain} $(I_n)_{n\ge 1}$, the union $I=\bigcup_{n\ge 1}I_n$ is a {$\Sym(\infty)$-invariant} ideal in $R$. Conversely, if $I$ is a {$\Sym(\infty)$-invariant} ideal, then the sequence of its truncations $I_n=I\cap R_n$ forms a $\Sym(\infty)$-invariant chain, called the \emph{saturated chain} of $I$. Note that an arbitrary {$\Sym(\infty)$-invariant chain} $(I_n)_{n\ge 1}$ is a subchain of the saturated chain of $I=\bigcup_{n\ge 1}I_n$. In other words, the saturated chain is the largest chain among all $\Sym(\infty)$-invariant chains that give rise to the same {$\Sym(\infty)$-invariant} ideal. The reader may consult \cite{NR17+} for an alternative point of view of $\Sym(\infty)$-invariant chains/ideals in the context of FI-modules.

A difficulty when working with {$\Sym(\infty)$-invariant} ideals is that $\Sym(\infty)$ behaves badly with respect to monomial orders on $R$ (see \cite[Remark 2.1]{BD11}). In particular, the initial ideal of a $\Sym(\infty)$-invariant ideal is not necessary a $\Sym(\infty)$-invariant ideal (see \cite[Example 2.2]{LNNR}). To overcome this difficulty, one introduces the following monoid of increasing functions on $\N$:
\[
 \Inc(\N) = \{\pi: \N \to \N \mid \pi(j)<\pi(j+1)\ \text{ for all }\ j\ge 1\}.
\]
In this paper, submonoids of $\Inc(\N)$ that fix initial segments of $\N$ also play an important role. Setting
\[
\Inc(\N)^i=\{\pi \in \Inc(\N) \mid \pi(j)=j\ \text{ for all }\ j\le i\}
\]
for any integer $i\ge0$, we obtain a descending chain of monoids
\[
\Inc(\N)=\Inc(\N)^0 \supset \Inc(\N)^1 \supset \Inc(\N)^2 \supset \cdots.
\]

As for $\Sym(\infty)$, one defines analogously the action of $\Inc(\N)^i$ on $R$ as well as $\Inc(\N)^i$-invariant chains/ideals. A chain $(I_n)_{n\ge 1}$ with $I_n$ an ideal in $R_n$ is called \emph{$\Inc(\N)^i$-invariant} if
\[
 \Inc(\N)^i_{m,n}(I_m)\subseteq I_n\quad \text{whenever }\ m\le n,
\]
where
\[
\Inc(\N)^i_{m,n}=\{\pi \in \Inc(\N)^i \mid \pi(m)\le n\}.
\]
Even though $\Inc(\N)^i$ is not a submonoid of $\Sym(\infty)$, it is easily seen that for any $f\in R_m$ and any $\pi\in\Inc(\N)^i_{m,n}$ with $m\le n$, there exists $\sigma\in\Sym(n)$ such that $\pi f=\sigma f$ (see, e.g., \cite[Lemma 7.6]{NR17}). Hence, $\Inc(\N)^i_{m,n}\cdot f\subseteq\Sym(n) \cdot f$.
It follows that every $\Sym(\infty)$-invariant chain/ideal in $R$ is also $\Inc(\N)^i$-invariant.

 Although the ring $R$ is not Noetherian, it has a very useful finiteness property. Cohen \cite[Theorem 7]{Co87} (see also Hillar-Sullivant \cite[Theorem 3.1]{HS12} and Nagel-R\"{o}mer \cite[Corollary 3.6]{NR17}) showed that $R$ is \emph{$\Inc(\N)^i$-Noetherian} (and hence, $\Sym(\infty)$-Noetherian), in the sense that any $\Inc(\N)^i$-invariant ideal $I\subseteq R$ is generated by the $\Inc(\N)^i$-orbits of finitely many elements, i.e., there exist $f_1,\dots,f_k\in I$ such that $I=\langle\Inc(\N)^i\cdot f_1,\dots,\Inc(\N)^i\cdot f_k\rangle$. From this result it follows that each $\Inc(\N)^i$-invariant chain $\Icc=(I_n)_{n\ge 1}$ \emph{stabilizes}, meaning that there is an integer $r\ge1$ such that for all $n\ge m\ge r$,
\[
I_n=\langle\Inc(\N)^i_{m,n}(I_m)\rangle_{R_n}
\]
as ideals in $R_n$ (see \cite[Lemma 5.2, Corollary 5.4]{NR17}). The least integer $r$ with this property is called the \emph{$i$-stability index} of $\Icc$, denoted by $\ind^i(\Icc)$. Obviously, an $\Inc(\N)^i$-invariant chain $\Icc$ is also $\Inc(\N)^{i+1}$-invariant. One can show that (see \cite[Corollary 6.6]{NR17})
\[
 \ind^{i+1}(\Icc)\le \ind^i(\Icc)+1.
\]

A key advantage of the monoids $\Inc(\N)^i$ over the group $\Sym(\infty)$, especially when working with invariant chains/ideals, is that the monoids $\Inc(\N)^i$ behave well with respect to certain monomial orders on $R$. We say that a monomial order $\le$ \emph{respects} $\Inc(\N)^i$ if $\pi(u)\le \pi(v)$ whenever $\pi \in \Inc(\N)^i$ and $u,v$ are monomials of $R$ with $u\le v$. This condition implies that
\[
\ini_{\le}(\pi(f))=\pi(\ini_{\le}(f))\quad \text{for all } f\in R\ \text{ and } \pi \in \Inc(\N)^i.
\]
An example of a monomial order respecting $\Inc(\N)^i$ is the lexicographic order $\le$ on $R$ induced by the following ordering of the variables:
\[
x_{k,j}\le x_{k',j'} \quad \text{if either }\ k<k'\ \text{ or }\ k=k'\ \text{ and }\ j<j'.
\]
In this paper, whenever $\le$ is a monomial order on $R$, we will use the same notation to denote its restriction to a subring $R_n$.

\begin{lem}[{\cite[Lemma 7.1]{NR17}}]
	\label{lem_initial_filtration}
	Let $\Icc=(I_n)_{n\ge 1}$ be an $\Inc(\N)^i$-invariant chain of ideals. Then for any monomial order $\le$ respecting $\Inc(\N)^i$ the chain $\ini_{\le}(\Icc)=(\ini_{\le}(I_n))_{n\ge 1}$ is also $\Inc(\N)^i$-invariant and
	\[
	\ind^i(\Icc) \le \ind^i(\ini_{\le}(\Icc)).
	\]
\end{lem}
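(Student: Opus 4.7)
The plan is to verify the two assertions separately, treating the invariance statement as the easy warm-up and the index inequality as a routine consequence of Gröbner basis theory.

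For the invariance of $\ini_{\le}(\Icc)$, I would pick any monomial generator $u$ of $\ini_{\le}(I_m)$, say $u=\ini_{\le}(f)$ with $f\in I_m$, and any $\pi\in\Inc(\N)^i_{m,n}$. The hypothesis that $\le$ respects $\Inc(\N)^i$ yields $\pi(u)=\pi(\ini_{\le}(f))=\ini_{\le}(\pi(f))$, and since $\Icc$ is $\Inc(\N)^i$-invariant we have $\pi(f)\in I_n$, whence $\pi(u)\in\ini_{\le}(I_n)$. Because $\ini_{\le}(I_m)$ is a monomial ideal, this suffices to conclude $\pi(\ini_{\le}(I_m))\subseteq\ini_{\le}(I_n)$.

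For the inequality $\ind^i(\Icc)\le\ind^i(\ini_{\le}(\Icc))$, set $r=\ind^i(\ini_{\le}(\Icc))$, fix $n\ge m\ge r$, and put
\[
J_n=\langle \Inc(\N)^i_{m,n}(I_m)\rangle_{R_n}.
\]
The invariance of $\Icc$ gives $J_n\subseteq I_n$, and the strategy is to show $\ini_{\le}(J_n)=\ini_{\le}(I_n)$, from which $J_n=I_n$ follows by the standard Gröbner basis fact that an inclusion of ideals which becomes equality after taking initial ideals is itself an equality. The inclusion $\ini_{\le}(J_n)\subseteq\ini_{\le}(I_n)$ is clear. For the reverse, for each $f\in I_m$ and $\pi\in\Inc(\N)^i_{m,n}$ we have $\pi(f)\in J_n$, so $\pi(\ini_{\le}(f))=\ini_{\le}(\pi(f))\in\ini_{\le}(J_n)$; hence $\Inc(\N)^i_{m,n}(\ini_{\le}(I_m))\subseteq\ini_{\le}(J_n)$. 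Since $m\ge r$ and $\ini_{\le}(\Icc)$ stabilizes from index $r$ (here invoking the already proved invariance of $\ini_{\le}(\Icc)$), we get
\[
\ini_{\le}(I_n)=\langle\Inc(\N)^i_{m,n}(\ini_{\le}(I_m))\rangle_{R_n}\subseteq\ini_{\le}(J_n),
\]
and equality of initial ideals follows.

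There is no real obstacle: the only point that requires a moment's care is the implication ``$J\subseteq I$ and $\ini_{\le}(J)=\ini_{\le}(I)$ imply $J=I$'' for ideals in the Noetherian ring $R_n$, which holds because the monomials outside $\ini_{\le}(J)$ give a $K$-basis of $R_n/J$, so any $x\in I$ reduces modulo a Gröbner basis of $J$ to an element of $I$ supported on monomials outside $\ini_{\le}(I)$, forcing the normal form to be $0$ and hence $x\in J$. Once this bookkeeping is in place, both parts of the lemma follow from the identity $\ini_{\le}(\pi(f))=\pi(\ini_{\le}(f))$ granted by the compatibility of $\le$ with $\Inc(\N)^i$.
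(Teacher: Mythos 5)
The paper does not actually prove this lemma; it is imported verbatim as a citation to \cite[Lemma 7.1]{NR17}, so there is no in-paper argument to compare against. That said, your proof is correct and is the natural one. The invariance part reduces cleanly to the compatibility identity $\ini_{\le}(\pi(f))=\pi(\ini_{\le}(f))$, which the paper records as a consequence of $\le$ respecting $\Inc(\N)^i$; the only minor thing worth making explicit is that every monomial in $\ini_{\le}(I_m)$ really is of the form $\ini_{\le}(f)$ for some $f\in I_m$ (a monomial multiple of an initial term is again an initial term), which you implicitly use. For the index inequality, your strategy of comparing $J_n=\langle\Inc(\N)^i_{m,n}(I_m)\rangle_{R_n}$ with $I_n$ via their initial ideals, combined with the standard Gr\"obner fact that $J\subseteq I$ and $\ini_{\le}(J)=\ini_{\le}(I)$ force $J=I$, is exactly the right tool, and your justification of that fact via Macaulay's basis theorem is sound. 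Nothing here is missing; the argument would serve as a complete proof of the cited lemma.
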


This simple lemma allows us to reduce the problem of bounding the Castelnuovo-Mumford regularity of ideals in an invariant chain to the case of monomial ideals. For the statement of our result in this case we need some further notation, which will be introduced in the next section.

\section{Weight functions}\label{sec3}

This section is devoted to defining certain invariants that we use to bound the growth of the Castelnuovo-Mumford regularity. Throughout the section, let $i \ge 0$ be a fixed integer and $\Icc=(I_n)_{n\ge 1}$ a nonzero $\Inc(\N)^i$-invariant chain of monomial ideals with $r=\ind^i(\Icc)$.
 
 For any monomial $1 \neq u \in R_n=K[X_n]$, we denote by $\min(u)$ (respectively, $\max(u)$) the smallest (respectively, largest) index $j$ such that $x_{k,j}$ divides $u$ for some $k\in [c]$. When $u=1$, we adopt the convention that $\min(u)=\max(u)=0$. Let $G(I_n)$ be the minimal set of monomial generators of $I_n$. In order to analyze the action of $\Inc(\N)^i$, we partition $G(I_n)$ into the following subsets:
\begin{align*}
 G^{(i)}_+(I_n)&=\{u\in G(I_n)\mid \min(u)>i\},\\
 G^{(i)}(I_n)&=\{u\in G(I_n)\mid \min(u)\le i <\max(u)\},\\
 G^{(i)}_-(I_n)&=\{u\in G(I_n)\mid \max(u)\le i\}.
\end{align*}

For all $n\ge r$ it is evident that $G^{(i)}_-(I_n)=G^{(i)}_-(I_r)$ since the elements of $G^{(i)}_-(I_r)$ are fixed by $\Inc(\N)^i$. So intuitively, the growth rate of $\reg(I_n)$ should depend only on the set $G^{(i)}_+(I_r)\cup G^{(i)}(I_r)$. Moreover, this growth rate might behave differently depending on whether $G^{(i)}_+(I_r)\ne \emptyset$ or $G^{(i)}_+(I_r)= \emptyset$, as illustrated by the following example.

\begin{example}
	\label{ex31}
	Consider the case $c=1$, i.e., $R$ has only one row of variables. For simplicity we take $K=\mathbb{Q}$ and write $R=\mathbb{Q}[x_j\mid j\ge 1].$ Let
	\[
	I_3=\langle x_1^2x_2,\ x_1x_3^5,\ x_2x_3^4,\ x_2^3x_3\rangle\subset R_3.
	\]
	Then $G^{(1)}_+(I_3)=\{x_2x_3^4,\ x_2^3x_3\}$. Consider the $\Inc(\N)^1$-invariant chain $\Icc=(I_n)_{n\ge 1}$ with
	\[
	I_n=\langle\Inc(\N)^1_{3,n}(I_3)\rangle \quad \text{for all }\ n\ge 3.
	\]
	($I_1$ and $I_2$ are unimportant, one may set $I_1=I_2=0$.) Using induction it is easy to see that
	\[
	I_n=\langle x_1^2x_{j-1},\ x_1x_j^5\mid 3\le j\le n\rangle +\langle x_jx_k^4,\ x_j^3x_k\mid 2\le j<k\le n\rangle \quad \text{for all }\ n\ge 3.
	\]
	Computations with Macaulay2 \cite{GS} yield the following table, which suggests that $\reg(I_n)$ could be a linear function with leading coefficient $2$ when $n\ge3$:
	\begin{table}[h]
		\begin{tabular}{|c|c|c|c|c|c|c|c|c|}
			\hline
			\multicolumn{1}{|c|}{$n$}& 3& 4 & 5 & 6 & 7  & 8  & 9  & 10 \\ \hline
			$\reg(I_n)$             &7& 9& 11 & 13 & 15 & 17 & 19& 21 \\ \hline
		\end{tabular}
	\end{table}
	
	We predict that the coefficient 2 can be determined by $G^{(1)}_+(I_3)$ as follows. For a nonzero monomial $u\in R$ let $w^{(1)}(u)$ be the largest exponent $e$ such that $x_{j}^e$ divides $u$ for some $j >1$ (a more general definition is given below). So for instance, $w^{(1)}(x_2x_3^4)=4$ and $w^{(1)}(x_2^3x_3)=3$. Now we observe the following relation between the leading coefficient 2 and the set $G^{(1)}_+(I_3)$:
	\[
	 2=\min\{w^{(1)}(u)-1\mid u\in G^{(1)}_+(I_3)\}.
	\]
	We also observe the same relation for many other $\Inc(\N)^i$-invariant chains with $G^{(i)}_+(I_r)\ne \emptyset$, which leads us to Conjecture \ref{con_c=1} in the next section.
	
	When $G^{(i)}_+(I_r)= \emptyset$ and $G^{(i)}(I_r)\ne \emptyset$, however, the leading coefficient of the predicted linear function $\reg(I_n)$ might behave differently. Computational experiments suggest that this coefficient should belong to the set $\{w^{(i)}(u)-1\mid u\in G^{(i)}(I_r)\}$, but is not necessarily the minimal one. For example, consider the ideal
	\[
	I_3'=\langle x_1^2x_2,\ x_1x_3^5\rangle\subset R_3
	\]
	and the $\Inc(\N)^1$-invariant chain $\Icc'=(I_n')_{n\ge 1}$ with
	\[
	I_n'=\langle\Inc(\N)^1_{3,n}(I_3')\rangle \quad \text{for all }\ n\ge 3.
	\]
	The following table is computed by Macaulay2:
	\begin{table}[h]
		\begin{tabular}{|c|c|c|c|c|c|c|c|c|}
			\hline
			\multicolumn{1}{|c|}{$n$}& 3& 4 & 5 & 6 & 7  & 8  & 9  & 10 \\ \hline
			$\reg(I_n')$             &7& 10& 14 & 18 & 22 & 26 & 30& 34 \\ \hline
		\end{tabular}
	\end{table}
	
	\noindent Thus, for $n\ge 4$, $\reg(I_n')$ could be a linear function with leading coefficient
	\[
	 4\in\{4,0\}=\{w^{(1)}(u)-1\mid u\in G^{(1)}(I_3')\}.
	\]
\end{example}

Example \ref{ex31} motivates the following definition.

\begin{defn}
	\label{def:i weight}
	Let $i \ge 0$ be a fixed integer and $k\in [c]$. For a nonzero monomial $u\in R_n$ set
	\[\begin{aligned}
 	w_k^{(i)}(u)&=\max\{e\mid \text{$x_{k,j}^e$ divides $u$ for some $j >i$}\},\\
	w^{(i)}(u)&=\max\{w_k^{(i)}(u)\mid k\in [c]\}.
	\end{aligned}	
	\]
	
	If $I_n\subseteq R_n$ is a nonzero monomial ideal, define the following \emph{weights}:
		\[
		\begin{aligned}
		w_k^{(i)}(I_n) &= \max \{ w_k^{(i)}(u) \mid u \in G(I_n)\},\\
		\omega^{(i)}(I_n) &=
		\begin{cases}
		\min \{ w^{(i)}(u) \mid u \in G^{(i)}_+(I_n)\}&\text{if }\ G^{(i)}_+(I_n)\ne\emptyset,\\
		\max \{ w^{(i)}(u) \mid u \in G(I_n)\}&\text{if }\  G^{(i)}_+(I_n)=\emptyset.
		\end{cases}
		\end{aligned}
		\]
\end{defn}

From the above definition one sees immediately that $w^{(i)}(u)=0$ if $u\in G^{(i)}_-(I_n)$. Hence, if $G^{(i)}_+(I_n)=\emptyset$ and $G^{(i)}(I_n)\ne\emptyset$, then
\[
 \omega^{(i)}(I_n) =\max \{ w^{(i)}(u) \mid u \in G(I_n)\}=\max \{ w^{(i)}(u) \mid u \in G^{(i)}(I_n)\}.
\]

\begin{example}
\label{ex33}
Let $i=2$ and $c= 3$. Consider the ideal $I_6=\langle u_1,\; u_2,\; u_3,\; u_4,\; u_5\rangle \subset R_6$ with
\[
u_1= x_{2,1}^4,\; u_2=x_{3,2}x_{1,3}^2x_{2,4},\; u_3=x_{1,1}^3x_{1,4}^2,\; u_4=x_{1,2}x_{2,5}^5x_{3,3}^2,\; u_5=x_{2,3}^3x_{3,4}^2x_{3,5}.
\]
Then 
$
 G^{(2)}_-(I_6)=\{u_1\},$ $G^{(2)}(I_6)=\{u_2,\; u_3,\; u_4\},$ and $G^{(2)}_+(I_6)=\{u_5\}.$
By definition,
\[
\begin{aligned}
  w_k^{(2)}(u_1)&=w^{(2)}(u_1)=0\quad \text{for all } k\in [3],\\
  w_1^{(2)}(u_2)&=2,\ w_2^{(2)}(u_2)=1,\ w_3^{(2)}(u_2)=0, \ w^{(2)}(u_2)=2,\\
  w_1^{(2)}(u_3)&=2,\ w_2^{(2)}(u_3)=0,\ w_3^{(2)}(u_3)=0, \ w^{(2)}(u_3)=2,\\
  w_1^{(2)}(u_4)&=0,\ w_2^{(2)}(u_4)=5,\ w_3^{(2)}(u_4)=2, \ w^{(2)}(u_4)=5,\\
  w_1^{(2)}(u_5)&=0,\ w_2^{(2)}(u_5)=3,\ w_3^{(2)}(u_5)=2, \ w^{(2)}(u_5)=3.
\end{aligned}
\]
It follows that
\[
 w_1^{(2)}(I_6)=2,\ w_2^{(2)}(I_6)=5,\ w_3^{(2)}(I_6)=2, \ \omega^{(2)}(I_6)=3.
\]
On the other hand, if we take $I_6'=\langle u_1,\; u_2,\; u_3,\; u_4\rangle$, then $G^{(2)}(I_6')=\{u_2,\; u_3,\; u_4\}$ and $G^{(2)}_+(I_6')=\emptyset.$ Hence,
\[
 \omega^{(2)}(I_6')=\max\{w^{(2)}(u) \mid u \in G(I_6')\}=\max\{w^{(2)}(u) \mid u \in G^{(2)}(I_6')\}=5.
\]
\end{example}

For later use we record here the following relationship between the weights.

\begin{lem}
 \label{lem_compare}
 Let $I_n\subseteq R_n$ be a nonzero monomial ideal. Then
 \[
  \omega^{(i)}(I_n)\le \max\{w_k^{(i)}(I_n)\mid k\in[c]\},
 \]
with equality if $G^{(i)}_+(I_n)=\emptyset.$
\end{lem}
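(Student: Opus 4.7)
The plan is to unravel the definitions carefully and then swap orders of $\max$ and $\min$. Everything reduces to the identity
$$\max_{u \in G(I_n)} w^{(i)}(u) \;=\; \max_{u \in G(I_n)} \max_{k\in[c]} w_k^{(i)}(u) \;=\; \max_{k\in[c]} \max_{u \in G(I_n)} w_k^{(i)}(u) \;=\; \max_{k\in[c]} w_k^{(i)}(I_n),$$
so the only content of the lemma is to dispose of the case distinction in the definition of $\omega^{(i)}$.

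First I would handle the equality case $G^{(i)}_+(I_n)=\emptyset$. Here the definition gives $\omega^{(i)}(I_n)=\max\{w^{(i)}(u)\mid u\in G(I_n)\}$ directly, so the displayed chain of equalities above delivers equality with $\max_{k}w_k^{(i)}(I_n)$ immediately. This simultaneously proves the inequality in this case.

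For the inequality in the remaining case $G^{(i)}_+(I_n)\neq\emptyset$, I would pick an arbitrary $u_0\in G^{(i)}_+(I_n)$ and bound
$$\omega^{(i)}(I_n)\;=\;\min\{w^{(i)}(u)\mid u\in G^{(i)}_+(I_n)\}\;\le\; w^{(i)}(u_0)\;=\;\max_{k\in[c]} w_k^{(i)}(u_0)\;\le\;\max_{k\in[c]} w_k^{(i)}(I_n),$$
where the last step uses that $u_0\in G(I_n)$ and hence $w_k^{(i)}(u_0)\le w_k^{(i)}(I_n)$ for every $k$.

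There is no real obstacle; the lemma is a bookkeeping statement about the two defining cases of $\omega^{(i)}$. The only thing to be mindful of is that in the second case one cannot expect equality in general, because $\omega^{(i)}(I_n)$ is taken as a minimum over the (possibly proper) subset $G^{(i)}_+(I_n)\subseteq G(I_n)$, whereas $\max_k w_k^{(i)}(I_n)$ samples the maximum weight across all generators; Example \ref{ex33} already witnesses a strict inequality, since there $\omega^{(2)}(I_6)=3<5=\max_k w_k^{(2)}(I_6)$.
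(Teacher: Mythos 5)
Your proof is correct and takes essentially the same approach as the paper: unwind the definitions, interchange the two $\max$ operators, and compare the $\min$ over $G^{(i)}_+(I_n)$ to the $\max$ over all of $G(I_n)$. The only cosmetic difference is that you split into the two cases of the definition of $\omega^{(i)}(I_n)$ explicitly, whereas the paper folds both cases into a single chain of estimates; also note your opening sentence says ``swap orders of $\max$ and $\min$'' when what you actually (correctly) do is swap two $\max$ operators.
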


\begin{proof}
 It follows from Definition \ref{def:i weight} that
 \[ 
 \begin{aligned}
  \omega^{(i)}(I_n)&\le \max \{ w^{(i)}(u) \mid u \in G(I_n)\}=\max \{ \max\{w_k^{(i)}(u)\mid k\in[c]\} \mid u \in G(I_n)\}\\
  &=\max \{ \max\{w_k^{(i)}(u)\mid u \in G(I_n)\} \mid k\in[c]\}=\max\{w_k^{(i)}(I_n)\mid k\in[c]\},
 \end{aligned}
 \]
with equality if $G^{(i)}_+(I_n)=\emptyset.$
\end{proof}

The weights defined in Definition \ref{def:i weight} can be extended to invariant chains. For this purpose, we need the following observation.

\begin{lem}
\label{lem_decreasing}
	Let $\Icc=(I_n)_{n\ge 1}$ be a nonzero $\Inc(\N)^i$-invariant chain of monomial ideals with $r=\ind^i(\Icc)$. Then for every $k\in[c]$ and $n\ge r$ one has
	\[
	\begin{aligned}
	 w_k^{(i)}(I_{n+1})\le w_k^{(i)}(I_{n})\quad\text{and}\quad
	\omega^{(i)}(I_{n+1})\le \omega^{(i)}(I_{n}).
	\end{aligned}\]
\end{lem}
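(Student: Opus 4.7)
The plan is to exploit two elementary observations about how an element $\pi \in \Inc(\N)^i$ acts on a monomial $u$. First, since $\pi$ fixes $[i]$ pointwise and maps $\N\setminus[i]$ injectively and increasingly into itself, the transformation $u \mapsto \pi(u)$ merely relabels the second indices of the variables without altering the exponents; in particular, for every $k \in [c]$,
\[
 w_k^{(i)}(\pi(u)) = w_k^{(i)}(u),\quad\text{and hence}\quad w^{(i)}(\pi(u)) = w^{(i)}(u).
\]
Secondly, since $\pi$ is strictly increasing and fixes $[i]$, one has $\min(\pi(u))=\pi(\min(u))$, which is greater than $i$ if and only if $\min(u) > i$.

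By the definition of the stability index, for $n \ge r$ we have $I_{n+1} = \langle \Inc(\N)^i_{n,n+1}(I_n)\rangle$, so every $v \in G(I_{n+1})$ can be written as $v = \pi(u)$ for some $u \in G(I_n)$ and some $\pi \in \Inc(\N)^i_{n,n+1}$. Combined with the first observation this immediately yields $w_k^{(i)}(v) = w_k^{(i)}(u) \le w_k^{(i)}(I_n)$, and taking the maximum over $v$ proves $w_k^{(i)}(I_{n+1}) \le w_k^{(i)}(I_n)$.

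For the inequality $\omega^{(i)}(I_{n+1}) \le \omega^{(i)}(I_n)$ I would argue by cases according to whether $G^{(i)}_+(I_n)$ is empty. The preparatory step is to show that $G^{(i)}_+(I_{n+1}) \ne \emptyset$ if and only if $G^{(i)}_+(I_n) \ne \emptyset$. One direction follows from the second observation applied to any $v = \pi(u) \in G^{(i)}_+(I_{n+1})$. For the converse, given $u \in G^{(i)}_+(I_n)$ one has $u \in I_{n+1}$, so some $v \in G(I_{n+1})$ divides $u$; since $u \in R_n$, also $v \in R_n$, and $\min(v) \ge \min(u) > i$, whence $v \in G^{(i)}_+(I_{n+1})$. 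Applying this to a particular $u \in G^{(i)}_+(I_n)$ attaining the minimum $\omega^{(i)}(I_n)$ produces $v \in G^{(i)}_+(I_{n+1})$ with $v \mid u$ and therefore $w^{(i)}(v) \le w^{(i)}(u) = \omega^{(i)}(I_n)$, giving the desired bound. When both $G^{(i)}_+$ sets are empty, $\omega^{(i)}$ reduces to a maximum over all of $G$, and the inequality is immediate from the first observation together with the representation of $G(I_{n+1})$ via $\Inc(\N)^i_{n,n+1}$.

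The main obstacle will be the case analysis for $\omega^{(i)}$, specifically ensuring that the ``$G^{(i)}_+$ nonempty'' status really does propagate from $I_n$ to $I_{n+1}$. This hinges on the elementary but essential fact that a monomial of $R_{n+1}$ that divides an element of $R_n$ must already lie in $R_n$, which together with the inequality $\min(v) \ge \min(u)$ allows one to replace a chosen witness $u$ in $G^{(i)}_+(I_n)$ by a witness $v$ in $G^{(i)}_+(I_{n+1})$ of no larger weight.
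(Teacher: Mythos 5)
Your proof is correct and follows essentially the same approach as the paper: both exploit that $\pi\in\Inc(\N)^i$ preserves the weights $w_k^{(i)}$ and the property $\min(\cdot)>i$, use the containment $G(I_{n+1})\subseteq\Inc(\N)^i_{n,n+1}(G(I_n))$ for the first inequality, and split into cases on whether $G^{(i)}_+(I_n)$ is empty, passing to a divisor $v\in G(I_{n+1})$ of a minimizer $u\in G^{(i)}_+(I_n)$ in the nonempty case. Your organization (establishing the equivalence of nonemptiness of $G^{(i)}_+(I_n)$ and $G^{(i)}_+(I_{n+1})$ as a preliminary, and treating the empty case uniformly via $\max$ over all of $G$) is a minor streamlining of the paper's two-case argument but carries the same content.
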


\begin{proof}
 Let $n\ge r$. We first show the inequality $w_k^{(i)}(I_{n+1})\le w_k^{(i)}(I_{n})$ for every $k\in[c]$. Since $I_{n+1}=\langle\Inc(\N)^i_{n,n+1}(I_n)\rangle=\langle\Inc(\N)^i_{n,n+1}(G(I_n))\rangle$, one gets
 \begin{equation}
 \label{eq31}
  G(I_{n+1})\subseteq \Inc(\N)^i_{n,n+1}(G(I_n)).
 \end{equation}
 For any $u\in G(I_n)$ and $\pi\in\Inc(\N)^i_{n,n+1}$ it follows from the definition that 
 \[
  w_k^{(i)}(u)=w_k^{(i)}(\pi(u)).
 \]
 This together with \eqref{eq31} gives
 \[
  \{w_k^{(i)}(v)\mid v\in G(I_{n+1})\}\subseteq \{w_k^{(i)}(u)\mid u\in G(I_{n})\}.
 \]
 Hence,
 \[
  w_k^{(i)}(I_{n+1})=\max\{w_k^{(i)}(v)\mid v\in G(I_{n+1})\}\le \max\{w_k^{(i)}(u)\mid u\in G(I_{n})\}=w_k^{(i)}(I_{n}).
 \]
 
 For the inequality $\omega^{(i)}(I_{n+1})\le \omega^{(i)}(I_{n})$ we may assume that $G^{(i)}(I_n)\cup G^{(i)}_+(I_n)\ne \emptyset$, since otherwise $G(I_n)=G^{(i)}_-(I_n)$ is fixed by $\Inc(\N)^i$, yielding $G(I_{n+1})=G(I_n)$, and thus there is nothing to prove. Consider the following cases:
 
 {\em Case 1}: $G^{(i)}_+(I_n)\ne \emptyset$. Then $\omega^{(i)}(I_{n})=\min\{w^{(i)}(u)\mid u\in G^{(i)}_+(I_{n})\}$. Let $u_0\in G^{(i)}_+(I_{n})$ be such that $\omega^{(i)}(I_{n})=w^{(i)}(u_0)$. Since $u_0\in I_n\subseteq I_{n+1}$, $u_0$ is divisible by some $v_0\in G(I_{n+1})$. It is clear that $\min(v_0)\ge \min(u_0)>i$. Thus, $v_0\in G^{(i)}_+(I_{n+1})$, and hence, $G^{(i)}_+(I_{n+1})\ne \emptyset$. So $\omega^{(i)}(I_{n+1})=\min\{w^{(i)}(v)\mid v\in G^{(i)}_+(I_{n+1})\}$ by definition. Therefore,
 \[
  \omega^{(i)}(I_{n+1})\le w^{(i)}(v_0)\le w^{(i)}(u_0)= \omega^{(i)}(I_{n}).
 \]

 {\em Case 2}: $G^{(i)}_+(I_n)= \emptyset$ and $G^{(i)}(I_n)\ne \emptyset$. Then it follows from \eqref{eq31} that $G^{(i)}_+(I_{n+1})= \emptyset$ and
 \begin{align*}
  G^{(i)}(I_{n+1})&\subseteq \Inc(\N)^i_{n,n+1}(G^{(i)}(I_n)).
 \end{align*}
 Arguing as in the proof of the inequality $w_k^{(i)}(I_{n+1})\le w_k^{(i)}(I_{n})$, with $w_k^{(i)}(u)$ replaced by $w^{(i)}(u)$ and $G(I_n)$ replaced by $G^{(i)}(I_n)$, we get
 \begin{equation}
 \label{eq32}
  \{w^{(i)}(v)\mid v\in G^{(i)}(I_{n+1})\}\subseteq \{w^{(i)}(u)\mid u\in G^{(i)}(I_{n})\},
 \end{equation}
which yields
\[
  \omega^{(i)}(I_{n+1})=\max\{w^{(i)}(v)\mid v\in G^{(i)}(I_{n+1})\}\le\max\{w^{(i)}(u)\mid u\in G^{(i)}(I_{n})\}=\omega^{(i)}(I_{n}),
 \]
 as desired.
\end{proof}

\begin{remark}
 Under the assumption of the previous lemma, it is not hard to show that if $G^{(i)}_+(I_r)\ne \emptyset$, then $\omega^{(i)}(I_{n})= \omega^{(i)}(I_{r})$ for all $n\ge r$. We leave the details to the interested reader.
\end{remark}

Lemma \ref{lem_decreasing} says that $(w_k^{(i)}(I_{n}))_{n\ge r}$ and $(\omega^{(i)}(I_{n}))_{n\ge r}$ are non-increasing sequences of non-negative integers. So they must be eventually constants. This justifies the following definition.

\begin{defn}
	\label{def:i weight2}
	Let $\Icc=(I_n)_{n\ge 1}$ be a nonzero $\Inc(\N)^i$-invariant chain of monomial ideals. Set
	\[
	\begin{aligned}
	w_k^{(i)}(\Icc)&=w_k^{(i)}(I_{n})\quad \text{for } n\gg 0,\\
	\omega^{(i)}(\Icc)&= \omega^{(i)}(I_{n})\quad \text{for } n\gg 0.
	\end{aligned}\]
\end{defn}

\begin{example}
\label{ex_i_weights}
 Let $i=1$ and $c=2$. Consider the ideal
 \[
  I_4=\langle x_{1,2}^4x_{2,2},\; x_{1,3}^2,\; x_{2,2}^3,\; x_{1,1}x_{2,3}^5,\; x_{2,4}^4 \rangle.
 \]
 Then
 \[
  w_1^{(1)}(I_{4})=4,\quad w_2^{(1)}(I_{4})=5,\quad \omega^{(1)}(I_{4})=2.
 \]
 Let $\Icc=(I_n)_{n\ge 1}$ be the $\Inc(\N)^1$-invariant chain with $I_n=\langle\Inc(\N)^1_{4,n}(I_4)\rangle$ for all $n\ge 4$. One can show by induction that
  \begin{equation}
  \label{eq38}
   I_n=\langle x_{1,2}^4x_{2,2},\; x_{1,j}^2\mid 3\le j\le n-1\rangle+\langle x_{2,j}^3,\;x_{2,n-1}^4,\;x_{2,n}^4\mid 2\le j\le n-2\rangle \ \ \text{for all } n\ge 5.
  \end{equation}
 Thus, for $n\ge 5$ it holds that
 \[
  w_1^{(1)}(\Icc)=w_1^{(1)}(I_{n})=4,\quad  w_2^{(1)}(\Icc)=w_2^{(1)}(I_{n})=4,\quad \omega^{(1)}(\Icc)=\omega^{(1)}(I_{n})=2.
 \]

\end{example}

The weights introduced in Definition \ref{def:i weight2} have further stabilization properties. Recall that an $\Inc(\N)^i$-invariant chain $\Icc=(I_n)_{n\ge 1}$ is also $\Inc(\N)^{i+1}$-invariant. By definition, it is clear that $w_k^{(i+1)}(u)\le w_k^{(i)}(u)$ and $w^{(i+1)}(u)\le w^{(i)}(u)$ for every nonzero monomial $u$ and every $k\in [c]$. This implies
\[
 w_k^{(i+1)}(\Icc) \le w_k^{(i)}(\Icc)\quad \text{and}\quad\omega^{(i+1)}(\Icc)\le \omega^{(i)}(\Icc).
\]
Thus, $w_k^{(i)}(\Icc)$ and $\omega^{(i)}(\Icc)$ must be constants for $i\gg0$, leading to the following:
\begin{defn}
	\label{def:i weight3}
	Let $\Icc=(I_n)_{n\ge 1}$ be a nonzero $\Inc(\N)^i$-invariant chain of monomial ideals for some $i\ge0$. We define
	\[
	\begin{aligned}
	w_k(\Icc)&=w_k^{(i)}(\Icc)\quad \text{for } i\gg 0,\\
	\omega(\Icc)&= \omega^{(i)}(\Icc)\quad \text{for } i\gg 0.
	\end{aligned}\]
\end{defn}

\begin{example}
 Consider again the $\Inc(\N)^1$-invariant chain $\Icc=(I_n)_{n\ge 1}$ in Example \ref{ex_i_weights}. Then from \eqref{eq38} it is easily seen that
 \[
  w_1^{(i)}(\Icc)=2,\quad  w_2^{(i)}(\Icc)=4,\quad \omega^{(i)}(\Icc)=2\quad \text{for all } i\ge 2.
 \]
Hence, $w_1(\Icc)=2$, $w_2(\Icc)=4$, and $\omega(\Icc)=2$.
\end{example}


\section{Upper bounds for Castelnuovo-Mumford regularity}\label{sec4}

We continue to use the notation introduced in the previous sections. In particular, we fix a positive integer $c$ and consider the polynomial ring $R=K[x_{k,j}\mid k\in [c], j\in \N]$ and its subrings $R_n=K[x_{k,j}\mid k\in [c], j\in [n]]$ for $n\ge 1$. 
With the notation of Definition \ref{def:i weight3} the main result of this section is:

\begin{thm}
	\label{thm_bounding_reg_monomial}
	Let $\Icc=(I_n)_{n\ge 1}$ be a nonzero $\Inc(\N)^i$-invariant chain of monomial ideals. Set 
	\[
	C(\Icc)=\max\{\omega(\Icc)-1,0\}+\max\Big\{\sum_{k\ne l}w_k(\Icc)\mid l\in[c]\Big\}. 
	\]
	Then
	\[
	\reg(I_n)\le C(\Icc)n+D(\Icc)\quad \text{for all }\ n\gg 0,
	\]
	where $D(\Icc)$ is a suitable constant.
\end{thm}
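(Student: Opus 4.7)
The plan is to prove the linear bound by establishing, for all $n$ sufficiently large, the single-step increment
\[
\reg(I_{n+1}) \leq \reg(I_n) + C(\Icc).
\]
Here ``sufficiently large'' means past the stability index $\ind^i(\Icc)$ and past the point where $w_k^{(i)}(I_n)$ and $\omega^{(i)}(I_n)$ have attained their stable values $w_k(\Icc)$ and $\omega(\Icc)$ from Definitions~\ref{def:i weight2}--\ref{def:i weight3}. Iterating this inequality from a base index $n_0$ then yields $\reg(I_n) \leq C(\Icc)n + D(\Icc)$ with $D(\Icc) = \reg(I_{n_0}) - C(\Icc)n_0$, which absorbs the initial regularity.

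To prove the increment, I would strip the variables in column $n+1$ one at a time via iterated short exact sequences of the form
\[
0 \to \bigl(R_{n+1}/(I_{n+1}:x_{l,n+1})\bigr)(-1) \xrightarrow{\,\cdot\, x_{l,n+1}} R_{n+1}/I_{n+1} \to R_{n+1}/(I_{n+1}+(x_{l,n+1})) \to 0.
\]
For a fixed choice of $l \in [c]$, successive colons by $x_{l,n+1}$ stabilize after at most $w_l^{(i)}(I_{n+1}) = w_l(\Icc)$ steps, since no minimal generator of $I_{n+1}$ contains $x_{l,n+1}$ to a higher power. Performing this filtration row-by-row for each $k \in [c]$ produces a filtration of $R_{n+1}/I_{n+1}$ whose terminal piece is a quotient of $R_n$ by the monomial ideal obtained from $I_{n+1}$ by setting every column-$(n+1)$ variable to zero. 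The cumulative regularity shift across the filtration is at most $\sum_{k \neq l} w_k(\Icc)$ for the best choice of $l$, plus an additional $\max\{\omega(\Icc)-1,0\}$ contributed by the ``purely-new-column'' generators, i.e.\ those in $G^{(i)}_+$ realizing $\omega(\Icc)$.

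The key reduction is that this terminal ideal coincides with $I_n$ up to a perturbation of degree bounded independently of $n$ (absorbable into $D(\Icc)$). This uses $\Inc(\N)^i$-invariance crucially: each minimal generator of $I_{n+1}$ either already lies in $R_n$ (and thus in $I_n$) or involves some $x_{k,n+1}$ and is killed by the final substitution. Combining the filtration estimate with this reduction, and then taking the $\max$ over $l \in [c]$ to cover the worst-case choice (which may vary from step to step along the chain), produces the stated coefficient $C(\Icc)$.

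The main obstacle will be the bookkeeping needed to avoid losing a factor of $\min_l w_l(\Icc)$: a naive sum over all rows would yield $\sum_k w_k(\Icc)$ rather than $\max_l \sum_{k \neq l} w_k(\Icc)$. Saving this factor requires observing that at the final stripping step, when only one row remains, the resulting colon already agrees with the ideal one is reducing to, so no further shift is incurred. A second subtle point is the $\max\{\omega(\Icc)-1,0\}$ correction: when $G^{(i)}_+(I_r) \neq \emptyset$, the generator realizing $\omega(\Icc)$ contributes $\omega(\Icc)-1$ (rather than $\omega(\Icc)$) via a standard mapping-cone estimate on one level of the filtration; when $G^{(i)}_+(I_r) = \emptyset$ or $\omega(\Icc) \leq 1$, the contribution collapses to $0$, which the outer $\max$ handles uniformly. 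I expect these bookkeeping arguments to be the technical content deferred to \Cref{sec5}.
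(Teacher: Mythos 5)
Your plan hinges on stripping the variables $x_{1,n+1},\dots,x_{c,n+1}$ in the \emph{last} column of $R_{n+1}$ and relating the terminal stripped ideal to $I_n$. This is where the argument breaks down. The paper instead strips the variables $x_{1,i+1},\dots,x_{c,i+1}$ in column $i+1$, the first column that actually moves under $\Inc(\N)^i$, and this choice is what makes the induction possible. Concretely: for each ${\bf e}\in\Z^c_{\ge 0}$ one forms the chain $\Icc_{\bf e}=(I_{{\bf e},n})$ with $I_{{\bf e},n}=\langle I_n:x_{1,i+1}^{e_1}\cdots x_{c,i+1}^{e_c},x_{1,i+1},\dots,x_{c,i+1}\rangle$, and Lemma~\ref{lem_linearform_manysteps} establishes two facts neither of which is available in your setup: (i) $\Icc_{\bf e}$ is an $\Inc(\N)^{i+1}$-invariant chain with stability index $r+1$; and (iii) $q(\Icc_{\bf e})\le q(\Icc)$, with equality forcing $I_{{\bf e},n+1}=\langle\sigma_i(I_n),x_{1,i+1},\dots,x_{c,i+1}\rangle$, so that $\reg(I_{{\bf e},n+1})=\reg(I_n)$. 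The induction is then on the finite quantity $q(\Icc)$, not on $n$: when $q$ drops you invoke the induction hypothesis on $(i+1,\Icc_{\bf e})$ (which requires invariance, i.e.\ fact (i)), and when $q$ does not drop, fact (iii) supplies the recursion $\reg(I_{{\bf e},n+1})=\reg(I_n)$ with the weight bound $|{\bf e}|\le C^{(i)}(\Icc)$ from Proposition~\ref{i-weight BC}.

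If you instead colon by powers of $x_{k,n+1}$, the resulting ideals $\langle I_{n+1}:x_{1,n+1}^{e_1}\cdots x_{c,n+1}^{e_c},x_{1,n+1},\dots,x_{c,n+1}\rangle$ depend on $n$ through the column being stripped; they do not assemble into an $\Inc(\N)^j$-invariant chain for any $j$, so neither the paper's $q$-induction nor your proposed $n$-induction has anything to recurse onto. Relatedly, your assertion that the terminal stripped ideal ``coincides with $I_n$ up to a perturbation of degree bounded independently of $n$'' is the entire content of the argument and is not established; the intermediate colon ideals $I_{n+1}:x_{k,n+1}^e$ for $0<e<d_k$ have no tractable relation to $\reg(I_n)$. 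Finally, your reading of $C(\Icc)$ as a ``cumulative regularity shift $\sum_{k\neq l}w_k$ plus a correction $\max\{\omega-1,0\}$'' across a filtration is not how the coefficient enters the paper's proof: there, $C^{(i)}(\Icc)$ appears purely as an upper bound on $|{\bf e}|$ for the exponent vectors ${\bf e}$ with $q(\Icc_{\bf e})=q(\Icc)$ (Proposition~\ref{i-weight BC}, Condition (BC2)), obtained from the combinatorial fact that some $e_k<\widetilde{\omega}^{(i)}(\Icc)\le\omega^{(i)}(\Icc)$ via Lemma~\ref{compare weights}(ii). The ingredients you do correctly identify — Lemma~\ref{cor:bdl}, the shift-by-$|{\bf e}|$ bookkeeping, and the role of $G^{(i)}_+$ — are all present in the paper, but they are deployed in a fundamentally different induction whose engine you are missing.
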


We postpone the proof of Theorem \ref{thm_bounding_reg_monomial} until the next section. For now, let us derive from it some important consequences. At first, we obtain a weaker version of the theorem that is easier to use.

\begin{cor}
	\label{cor_i_bound}
	Let $\Icc=(I_n)_{n\ge 1}$ be a nonzero $\Inc(\N)^i$-invariant chain of monomial ideals. Set 
	\[
	C^{(i)}(\Icc)=\max\{\omega^{(i)}(\Icc)-1,0\}+\max\Big\{\sum_{k\ne l}w_k^{(i)}(\Icc)\mid l\in[c]\Big\}. 
	\]
	Then there exists a constant $D^{(i)}(\Icc)$ such that
	\[
	\reg(I_n)\le C^{(i)}(\Icc)n+D^{(i)}(\Icc)\quad \text{for all }\ n\gg 0.
	\]
\end{cor}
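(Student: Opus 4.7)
My plan is to deduce this corollary directly from Theorem \ref{thm_bounding_reg_monomial} by comparing the constants $C(\Icc)$ and $C^{(i)}(\Icc)$. The corollary is essentially the same statement with weights that are potentially larger, so all the real work is contained in the theorem; what remains is only a monotonicity check.

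The key input is the inequality
\[
w_k(\Icc) \le w_k^{(i)}(\Icc) \quad \text{for every } k \in [c], \qquad \omega(\Icc) \le \omega^{(i)}(\Icc),
\]
which I would justify as follows. Since $\Icc$ is $\Inc(\N)^i$-invariant, it is also $\Inc(\N)^j$-invariant for every $j \ge i$, so each $w_k^{(j)}(\Icc)$ and $\omega^{(j)}(\Icc)$ is defined for such $j$. The discussion preceding Definition \ref{def:i weight3} notes that these sequences are non-increasing in $j$, and by Definition \ref{def:i weight3} their eventual constant values are exactly $w_k(\Icc)$ and $\omega(\Icc)$. This yields the displayed inequalities.

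From these termwise bounds I obtain $\max\{\omega(\Icc)-1,0\} \le \max\{\omega^{(i)}(\Icc)-1,0\}$, and for each fixed $l \in [c]$, $\sum_{k \ne l} w_k(\Icc) \le \sum_{k \ne l} w_k^{(i)}(\Icc)$; taking the maximum over $l$ preserves this inequality. Adding the two resulting bounds gives $C(\Icc) \le C^{(i)}(\Icc)$.

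Theorem \ref{thm_bounding_reg_monomial} now supplies a constant $D(\Icc)$ and an integer $N$ with $\reg(I_n) \le C(\Icc)\,n + D(\Icc)$ for all $n \ge N$. Setting $D^{(i)}(\Icc) := D(\Icc)$ and combining with $C(\Icc) \le C^{(i)}(\Icc)$ yields the desired bound $\reg(I_n) \le C^{(i)}(\Icc)\,n + D^{(i)}(\Icc)$ for $n \gg 0$. There is no genuine obstacle here; the only point worth being careful about is invoking the correct monotonicity in $i$ (rather than in $n$) to pass from the $i$-th weights to the stable weights used in the theorem.
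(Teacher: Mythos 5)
Your argument is essentially identical to the paper's proof of Corollary \ref{cor_i_bound}: both establish $C(\Icc)\le C^{(i)}(\Icc)$ via the monotonicity $w_k(\Icc)\le w_k^{(i)}(\Icc)$ and $\omega(\Icc)\le\omega^{(i)}(\Icc)$ and then invoke Theorem \ref{thm_bounding_reg_monomial}. One caveat worth keeping in mind: in the paper's actual logical development Theorem \ref{thm_bounding_reg_monomial} is itself deduced from Corollary \ref{cor_i_bound} (via Remark \ref{rem: equi}, Theorem \ref{bounding_coefficients}, and Proposition \ref{i-weight BC}), so your reduction of the corollary to the theorem is non-circular only because the corollary also receives that independent proof in Section \ref{sec5}.
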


\begin{proof}
 By definition, $\omega(\Icc)\le \omega^{(i)}(\Icc)$ and $w_k(\Icc)\le w_k^{(i)}(\Icc)$ for all $k\in[c]$. Hence, $C(\Icc)\le C^{(i)}(\Icc)$, and the corollary follows at once from Theorem \ref{thm_bounding_reg_monomial}.
\end{proof}

\begin{remark}
 \label{rem: equi}
 Although Corollary \ref{cor_i_bound} is weaker than Theorem \ref{thm_bounding_reg_monomial}, the correctness of these two results are equivalent. Indeed, assume that Corollary \ref{cor_i_bound} is true. Let $i_0\gg0$ be such that $C(\Icc)= C^{(i_0)}(\Icc)$. Since $\Icc$ is also an $\Inc(\N)^{i_0}$-invariant chain, it follows from Corollary \ref{cor_i_bound} that for all $n\gg 0$,
 \[
 \reg(I_n)\le C^{(i_0)}(\Icc)n+D^{(i_0)}(\Icc)=C(\Icc)n+D^{(i_0)}(\Icc),
 \]
 yielding Theorem \ref{thm_bounding_reg_monomial}. Therefore, in order to prove Theorem \ref{thm_bounding_reg_monomial}, it is enough to prove Corollary \ref{cor_i_bound}.
\end{remark}

\begin{remark}
   \label{rem:hoa-trung}
 Let $\Icc=(I_n)_{n\ge 1}$ be a nonzero $\Inc(\N)^i$-invariant chain of monomial ideals with $r=\ind^i(\Icc)$. Using \cite[Theorem 1.2(i)]{HT} one can show that
 \[
 \reg(I_n)\le \delta(I_r)cn\quad \text{for }\ n\gg 0,
 \]
 where $\delta(I_r)$ denotes the maximal degree of a monomial in $G(I_r)$. Since clearly
 \[
  \delta(I_r)\ge \omega^{(i)}(I_r)\ge \omega^{(i)}(\Icc)\ge \omega(\Icc)\ \text{ and } \ \delta(I_r)\ge w_k^{(i)}(I_r)\ge w_k^{(i)}(\Icc)\ge w_k(\Icc) 
 \]
for all $k\in[c]$ (and usually $\delta(I_r)>\max\{w_k^{(i)}(\Icc)\mid k\in [c]\}\ge\omega^{(i)}(\Icc)$), the bounds in Theorem~\ref{thm_bounding_reg_monomial} and Corollary \ref{cor_i_bound} are considerably stronger than the previous one. In fact, the bound in Theorem \ref{thm_bounding_reg_monomial} is rather sharp, as illustrated by the next example and the discussions in the case $c=1$ below (see, especially, Conjecture \ref{con_c=1} and Propositions \ref{c=1_constant}, \ref{mon_orbit}). 
\end{remark}

\begin{example}
 Let $\Icc=(I_n)_{n\ge 1}$ be an $\Inc(\N)^1$-invariant chain where $c = 2$, $K=\mathbb{Q}$, $I_n = \langle \Inc(\N)^1_{3, n} (I_3) \rangle$ if $n \ge 3$, and 
 \[
  I_3=\langle x_{1,1}x_{1,3},\; x_{1,2}^2x_{2,2}^2\rangle.
 \]
Then one can easily check that
\[
 w_1(\Icc)=w_1^{(1)}(\Icc)=2, \quad w_2(\Icc)=w_2^{(1)}(\Icc)=2, \quad \omega(\Icc)=\omega^{(1)}(\Icc)=2.
\]
Thus,
\[
 C(\Icc)=C^{(1)}(\Icc)=2-1+2=3.
\]
This number is possibly the leading coefficient of a linear function describing $\reg(I_n)$ when $n\gg0$, as suggested by the following table, which was obtained using Macaulay2:
\begin{table}[h]
		\begin{tabular}{|c|c|c|c|c|c|c|c|c|}
			\hline
			\multicolumn{1}{|c|}{$n$}& 3& 4 & 5 & 6 & 7  & 8  & 9  & 10 \\ \hline
			$\reg(I_n)$             &5& 7& 10 & 13 & 16 & 19 & 22& 25 \\ \hline
		\end{tabular}
	\end{table}

More generally, for $c\ge1$, computations with Macaulay2 also suggest that the bound in Theorem~\ref{thm_bounding_reg_monomial} is tight if one considers the chain $\Icc'=(I_n')_{n\ge 1}$ defined as above with $I_3$ replaced by
\[
  I_3'=\langle x_{1,1}x_{1,3},\; x_{1,2}^mx_{2,2}^m\cdots x_{c,2}^m\rangle,
 \]
 where $m$ is a positive integer.
\end{example}

 As another application of Theorem \ref{thm_bounding_reg_monomial} we give an upper linear bound for the asymptotic behavior of the Castelnuovo-Mumford regularity along any $\Inc(\N)^i$-invariant chain of graded ideals.

\begin{cor}
	\label{thm_bounding_reg}
 Let $\Icc=(I_n)_{n\ge 1}$ be a nonzero $\Inc(\N)^i$-invariant chain of graded ideals. Let $\le$ be a monomial order on $R$ respecting $\Inc(\N)^i$. 
 Then there exists a constant $D(\Icc)$ such that
	\[
	\reg(I_n)\le C(\ini_{\le}(\Icc))n+D(\Icc)\quad \text{for all }\ n\gg 0.
	\]
\end{cor}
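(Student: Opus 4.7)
The plan is to reduce immediately to the monomial case by passing to initial ideals. The key classical fact I would invoke is that for any monomial order $\le$ on a Noetherian polynomial ring and any graded ideal $J$, one has $\reg(J)\le \reg(\ini_{\le}(J))$ (this is a standard upper-semicontinuity statement for regularity under Gr\"obner deformation; see, e.g., Eisenbud, \emph{The Geometry of Syzygies}). Applied term-by-term to the chain $\Icc=(I_n)_{n\ge 1}$, this yields
\[
\reg(I_n)\le \reg(\ini_{\le}(I_n))\quad \text{for every } n\ge 1.
\]

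Next, I would use Lemma \ref{lem_initial_filtration}: since $\le$ respects $\Inc(\N)^i$, the chain $\ini_{\le}(\Icc)=(\ini_{\le}(I_n))_{n\ge 1}$ is itself an $\Inc(\N)^i$-invariant chain, and it is obviously a chain of monomial ideals. Thus Theorem \ref{thm_bounding_reg_monomial} applies to $\ini_{\le}(\Icc)$: there exists a constant $D(\ini_{\le}(\Icc))$ such that
\[
\reg(\ini_{\le}(I_n))\le C(\ini_{\le}(\Icc))\, n+D(\ini_{\le}(\Icc))\quad \text{for all }\ n\gg 0.
\]
Combining the two inequalities and setting $D(\Icc):=D(\ini_{\le}(\Icc))$ gives the claimed bound.

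There is essentially no obstacle here, as both ingredients are in place: the semicontinuity of regularity under taking initial ideals is classical, and the invariance of $\ini_{\le}(\Icc)$ is precisely the content of Lemma \ref{lem_initial_filtration}. The only small point to be careful about is that one must apply $\reg(J)\le \reg(\ini_{\le}(J))$ inside each Noetherian ring $R_n$, using the restriction of $\le$ to $R_n$ (which still respects the induced action); but this is unproblematic since the paper explicitly adopts the convention that $\le$ denotes its restriction to each $R_n$.
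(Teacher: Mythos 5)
Your argument is correct and is essentially identical to the paper's proof: both invoke the classical inequality $\reg(I_n)\le \reg(\ini_{\le}(I_n))$, then pass to the $\Inc(\N)^i$-invariant monomial chain $\ini_{\le}(\Icc)$ via Lemma~\ref{lem_initial_filtration}, and conclude by applying Theorem~\ref{thm_bounding_reg_monomial}. Nothing is missing.
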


\begin{proof}
It is well-known (see, e.g., \cite[Theorem 3.3.4]{HH11}) that 
$\reg (I_n) \le \reg (\ini_{\le}(I_n))$ for all $n\ge 1$.
Hence the result follows from Lemma \ref{lem_initial_filtration} and Theorem \ref{thm_bounding_reg_monomial}.
\end{proof}

Recall that any $\Sym(\infty)$-invariant chain is also $\Inc(\N)^i$-invariant. Therefore, as a direct consequence of Corollary \ref{thm_bounding_reg} we get:

\begin{cor}
	\label{cor_boundingreg_Sym}
	Let $\Icc=(I_n)_{n\ge 1}$ be a $\Sym(\infty)$-invariant chain of graded ideals. Then there exist integers $C$ and $D$ such that 
$$\reg (I_n) \le Cn+D \quad\text{ for all }\ n\gg 0.$$
\end{cor}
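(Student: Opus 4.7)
The plan is to derive this corollary as a direct consequence of Corollary \ref{thm_bounding_reg}. The key bridge is already pointed out in \Cref{sec2}: for any $f \in R_m$ and any $\pi \in \Inc(\N)^i_{m,n}$ with $m \le n$, there is some $\sigma \in \Sym(n)$ with $\pi f = \sigma f$; consequently every $\Sym(\infty)$-invariant chain is also $\Inc(\N)^i$-invariant for every $i \ge 0$. So a $\Sym(\infty)$-invariant chain $\Icc$ automatically falls within the scope of Corollary \ref{thm_bounding_reg}.

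To invoke that corollary, I need to fix a monomial order on $R$ respecting $\Inc(\N)^i$. Fortunately such an order is readily available: the lexicographic order induced by the ordering $x_{k,j} \le x_{k',j'}$ if either $k < k'$, or $k = k'$ and $j < j'$, was noted earlier in \Cref{sec2} to respect $\Inc(\N)^i$ for every $i \ge 0$. Taking $i = 0$ (or any convenient $i$) and this monomial order $\le$, Corollary \ref{thm_bounding_reg} yields a constant $D(\Icc)$ and the linear bound
\[
\reg(I_n) \le C(\ini_{\le}(\Icc))\, n + D(\Icc) \quad \text{for all } n \gg 0.
\]
Setting $C = C(\ini_{\le}(\Icc))$ and $D = D(\Icc)$ gives integers with the required property.

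There is essentially no obstacle here, because all the work has been done in Theorem \ref{thm_bounding_reg_monomial} and Corollary \ref{thm_bounding_reg}; the present statement just packages those results in the special case of $\Sym(\infty)$-invariance without referring to the weight invariants introduced in \Cref{sec3}. The only thing worth double-checking is that the constant $C(\ini_{\le}(\Icc))$ is indeed an integer, which is immediate from its definition in terms of the integer-valued weights $\omega$ and $w_k$ of the (nonzero) monomial chain $\ini_{\le}(\Icc)$; and for the degenerate case where $\Icc$ is the zero chain, $\reg(I_n) = -\infty$ for all $n$ and any $C$ and $D$ work.
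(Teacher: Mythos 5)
Your proof is correct and follows exactly the paper's route: the paper derives this corollary immediately from Corollary \ref{thm_bounding_reg} by observing that every $\Sym(\infty)$-invariant chain is $\Inc(\N)^i$-invariant, which is precisely what you do (with the helpful additional checks that the lexicographic order respects $\Inc(\N)^i$, that $C(\ini_{\le}(\Icc))$ is an integer, and that the zero chain is trivial).
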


Next, we discuss some interesting corollaries of Theorem \ref{thm_bounding_reg_monomial} in the case $c=1$, that is, when $R$ has only one row of variables. In this case, we will write $R_n=K[x_1,\dots,x_n]$ and $R=K[x_j\mid j\in\N]$. The next result is an immediate consequence of Theorem \ref{thm_bounding_reg_monomial}.

\begin{cor}
	\label{c=1}
	Suppose $c=1$. Let $\Icc=(I_n)_{n\ge 1}$ be a nonzero $\Inc(\N)^i$-invariant chain of monomial ideals. Then there is a constant $D(\Icc)$ such that
	\[
	\reg(I_n)\le \max\{\omega(\Icc)-1,0\}n+D(\Icc)\quad \text{for all }\ n\gg 0.
	\]
\end{cor}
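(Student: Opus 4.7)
The plan is simply to specialize Theorem \ref{thm_bounding_reg_monomial} to the case $c=1$ and observe that the second summand in the definition of $C(\Icc)$ vanishes. No additional work beyond unpacking definitions should be required; in particular, I do not need to reprove anything, since Theorem \ref{thm_bounding_reg_monomial} is available.

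More precisely, recall that
\[
C(\Icc) \;=\; \max\{\omega(\Icc)-1,\,0\} \;+\; \max\Big\{\sum_{k\ne l} w_k(\Icc) \;\Big|\; l\in[c]\Big\}.
\]
When $c=1$, the only choice of $l\in[c]$ is $l=1$, and then the index set $\{k\in[c]\mid k\ne l\}$ is empty. The inner sum is therefore the empty sum, which equals $0$, so the outer maximum is also $0$. Hence
\[
C(\Icc) \;=\; \max\{\omega(\Icc)-1,\,0\}.
\]

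Applying Theorem \ref{thm_bounding_reg_monomial} directly then yields a constant $D(\Icc)$ with
\[
\reg(I_n) \;\le\; C(\Icc)\,n + D(\Icc) \;=\; \max\{\omega(\Icc)-1,\,0\}\,n + D(\Icc)
\]
for all $n \gg 0$, which is exactly the claim. There is no genuine obstacle here: the proof is a one-line specialization, and the only thing worth emphasizing in the writeup is the convention that the empty sum is zero, which forces the second term of $C(\Icc)$ to disappear when $c=1$.
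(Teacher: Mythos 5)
Your proof is correct and matches the paper's approach: the paper states this corollary is an immediate consequence of Theorem \ref{thm_bounding_reg_monomial}, and indeed the only observation needed is that when $c=1$ the sum $\sum_{k\ne l} w_k(\Icc)$ is empty, hence zero, so $C(\Icc)=\max\{\omega(\Icc)-1,0\}$.
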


Keep the assumption of the preceding corollary and let $r= \ind^i(\Icc)$. Since $\omega(\Icc)\le \omega^{(i)}(\Icc)\le\omega^{(i)}(I_r)$ one gets
\begin{equation}
\label{eq_c=1}
 \reg(I_n)\le \max\{\omega^{(i)}(I_r)-1,0\}n+D(\Icc)\quad \text{for all }\ n\gg 0.
\end{equation}
By definition, it is apparent that $\omega^{(i)}(I_r)=0$ if and only if
\begin{enumerate}
 \item[(a)]
 $G(I_r)=G^{(i)}_-(I_r)$. 
\end{enumerate}
Moreover, $\omega^{(i)}(I_r)=1$ if and only if one of the following conditions is satisfied:
\begin{enumerate}
 \item[(b)]
 $G^{(i)}_+(I_r)$ contains at least one squarefree monomial,
 \item[(c)]
 $G^{(i)}_+(I_r)=\emptyset$, $G^{(i)}(I_r)\ne \emptyset$ and the elements of $G^{(i)}(I_r)$ are squarefree in the variables $x_{k,j}$ with $k\in [c]$ and $j>i$.
\end{enumerate}
Note also that $G^{(i)}_+(I_r)=G(I_r)$ when $i=0$ and $I_r\ne R_r$. So from \eqref{eq_c=1} we obtain:

\begin{cor}
	\label{c=1_bounded}
	Suppose $c=1$. Let $\Icc=(I_n)_{n\ge 1}$ be a nonzero $\Inc(\N)^i$-invariant chain of monomial ideals with $r=\ind^i(\Icc)$. Assume that $I_r$ satisfies one of Conditions (a)-(c) above. Then the sequence $(\reg(I_n))_{n\ge 1}$ is bounded.
	
	In particular, the sequence $(\reg(I_n))_{n\ge 1}$ is bounded if $\Icc=(I_n)_{n\ge 1}$ is a nonzero $\Inc(\N)$-invariant chain of monomial ideals with $r=\ind^0(\Icc)$ and $I_r$ has at least one squarefree minimal generator.
\end{cor}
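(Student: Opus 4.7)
The plan is to reduce to the upper bound \eqref{eq_c=1} from the preceding discussion and to use the dichotomy already stated there: $\omega^{(i)}(I_r)=0$ precisely under Condition (a), and $\omega^{(i)}(I_r)=1$ precisely under Condition (b) or (c). In any of the three cases one gets $\omega^{(i)}(I_r)\le 1$, so the slope $\max\{\omega^{(i)}(I_r)-1,\,0\}$ in \eqref{eq_c=1} is zero, and hence $\reg(I_n)\le D(\Icc)$ for all $n\gg 0$. Combined with the finiteness of the remaining (finitely many) initial terms, this bounds the entire sequence $(\reg(I_n))_{n\ge 1}$.

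Should the dichotomy require justification, I would derive it directly from Definition \ref{def:i weight}. Any $u\in G^{(i)}_+(I_r)\cup G^{(i)}(I_r)$ is divisible by some $x_{k,j}$ with $j>i$, so $w^{(i)}(u)\ge 1$, with $w^{(i)}(u)=1$ iff $u$ is squarefree (for $u\in G^{(i)}_+(I_r)$, where every variable dividing $u$ already has second index $>i$) or squarefree in the variables indexed by $j>i$ (for $u\in G^{(i)}(I_r)$). Meanwhile $w^{(i)}(u)=0$ holds exactly on $G^{(i)}_-(I_r)$. Feeding this into the two-branch definition of $\omega^{(i)}$ yields the claimed equivalences.

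For the ``in particular'' statement I take $i=0$. Any nonconstant minimal monomial generator $u$ of $I_r$ satisfies $\min(u)\ge 1>0=i$, so it lies in $G^{(0)}_+(I_r)$, and $G^{(0)}(I_r)=\emptyset$. The hypothesis of a squarefree minimal generator then places $I_r$ in Condition (b), and the first part of the corollary applies. No serious obstacle is expected here: essentially all the work has already been done in the derivation of \eqref{eq_c=1} and in the case analysis preceding the corollary; the only thing left is to match each hypothesis with the right branch of the definition of $\omega^{(i)}$.
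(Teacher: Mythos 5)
Your argument is correct and is exactly what the paper does: the corollary is an immediate consequence of the bound \eqref{eq_c=1} together with the observation $\omega^{(i)}(I_r)\le 1$ under Conditions (a)--(c), which the paper establishes in the discussion immediately preceding the statement. Your verification of the dichotomy from Definition~\ref{def:i weight} and the reduction of the ``in particular'' clause to Condition~(b) via $G^{(0)}_+(I_r)=G(I_r)$ (for $I_r\ne R_r$) both match the paper's reasoning.
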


\begin{cor}
	Suppose $c=1$. Let $\Icc=(I_n)_{n\ge 1}$ be a nonzero $\Inc(\N)$-invariant chain of graded ideals. Assume that
	$I_r$ has a minimal generator whose terms are squarefree for some $r\ge 1$. Then the sequence $(\reg(I_n))_{n\ge 1}$ is bounded.
	
	In particular, the conclusion is true if $\Icc$ is a $\Sym(\infty)$-invariant chain of graded ideals.
\end{cor}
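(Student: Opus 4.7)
The plan is to reduce to the monomial case by passing to an initial chain and applying Corollary \ref{thm_bounding_reg}. Let $\le$ be, for instance, the lexicographic order on $R$ respecting $\Inc(\N)$ that was recalled in Section \ref{sec2}. By Corollary \ref{thm_bounding_reg}, there is a constant $D(\Icc)$ such that
\[
\reg(I_n)\le C(\ini_{\le}(\Icc))\,n+D(\Icc)\quad \text{for all }\ n\gg 0.
\]
Since $c=1$, the inner sum $\sum_{k\ne l}w_k(\ini_{\le}(\Icc))$ is empty and hence vanishes, so $C(\ini_{\le}(\Icc))=\max\{\omega(\ini_{\le}(\Icc))-1,0\}$. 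Therefore the boundedness of $(\reg(I_n))_{n\ge 1}$ will follow once $\omega(\ini_{\le}(\Icc))\le 1$ is established.

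To prove this inequality, let $f\in I_r$ be a minimal generator all of whose terms are squarefree monomials. Then $\ini_{\le}(f)$ is itself a squarefree monomial, and it belongs to $\ini_{\le}(I_r)$. Since $I_r\subseteq I_n$ for every $n\ge r$, we have $\ini_{\le}(I_r)\subseteq\ini_{\le}(I_n)$, and hence $\ini_{\le}(f)\in\ini_{\le}(I_n)$. Any minimal monomial generator of $\ini_{\le}(I_n)$ dividing $\ini_{\le}(f)$ is then itself squarefree. For $i=0$ and $\ini_{\le}(I_n)$ a proper ideal one has $G^{(0)}_+(\ini_{\le}(I_n))=G(\ini_{\le}(I_n))$, so Definition \ref{def:i weight} gives $\omega^{(0)}(\ini_{\le}(I_n))\le 1$ for every $n\ge r$. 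Consequently
\[
\omega(\ini_{\le}(\Icc))\le\omega^{(0)}(\ini_{\le}(\Icc))\le 1
\]
by Definitions \ref{def:i weight2} and \ref{def:i weight3}, which completes the argument. The degenerate case $\ini_{\le}(I_n)=R_n$ is harmless since then $\reg(I_n)=0$.

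For the ``in particular'' clause no further work is required: as observed in Section \ref{sec2}, every $\Sym(\infty)$-invariant chain is automatically $\Inc(\N)$-invariant, so the first part of the corollary applies. The main subtlety of the proof is the combinatorial observation that a squarefree monomial inside a monomial ideal forces the existence of a squarefree \emph{minimal} generator; once this is recognized, the weight bound $\omega\le 1$ is automatic and Corollary \ref{thm_bounding_reg} supplies the desired boundedness.
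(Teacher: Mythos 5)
Your proof is correct and follows essentially the same strategy as the paper: pass to initial ideals via a monomial order respecting $\Inc(\N)$, observe that $\ini_\le(f)$ is a squarefree monomial forcing a squarefree minimal generator of the initial ideals, and conclude that the leading coefficient of the bound vanishes. The paper routes this through Corollary \ref{c=1_bounded} applied to $\ini_{\le}(I_{r'})$ with $r'=\ind^0(\ini_{\le}(\Icc))$, which requires a short argument that a squarefree minimal generator of $\ini_{\le}(I_r)$ forces one of $\ini_{\le}(I_{r'})$ whether $r\ge r'$ or $r<r'$; you instead cite Corollary \ref{thm_bounding_reg} and bound $\omega^{(0)}(\ini_{\le}(I_n))\le 1$ directly for every $n\ge r$, which sidesteps that case distinction since $\omega^{(0)}(\ini_{\le}(\Icc))=\omega^{(0)}(\ini_{\le}(I_n))$ for $n\gg 0$. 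Both arguments hinge on the same combinatorial observation, and your version is, if anything, marginally more streamlined.
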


\begin{proof}
It suffices to prove the first assertion. We may assume that the ideals $I_n$ are proper, since $\reg(R_n)=0$. Let $\le$ be a monomial order on $R$ respecting $\Inc(\N)$. By Lemma \ref{lem_initial_filtration}, the chain $\ini_{\le}(\Icc)=(\ini_{\le}(I_n))_{n\ge 1}$ is also $\Inc(\N)$-invariant. Set $r'=\ind^0(\ini_{\le}(\Icc))$. The assumption implies that $\ini_{\le}(I_r)$ has a squarefree minimal generator. Clearly, no matter whether $r\ge r'$ or $r<r'$ this forces that $\ini_{\le}(I_{r'})$ has a squarefree minimal generator. The assertion now follows from Corollary \ref{c=1_bounded} and the fact that $\reg (I_n) \le \reg (\ini_{\le}(I_n))$.
\end{proof}

In the previous result, the assumption on a minimal generator can sometimes be relaxed. 

\begin{example}
Let $K=\mathbb{Q}$. Consider the $\Inc(\N)$-invariant chain $\Icc=(I_n)_{n\ge 1}$ generated by 
$I_3 = \langle x_1 x_2 + x_3^2 \rangle \subseteq R_3$. Using the lexicographic order $\le$, the 
ideals $\ini_{\le}(I_n)$ contain the monomial $x_1 x_2$ if $n \ge 3$. Thus, the sequences 
$(\reg(\ini_{\le} (I_n)))_{n\ge 1}$ and $(\reg(I_n))_{n\ge 1}$ are bounded by Corollary \ref{c=1_bounded}. A Macaulay2 
computation gives $\reg (I_n) = 4$ if $4 \le n \le 10$, suggesting that $\reg (I_n) = 4$ whenever 
$n \ge 4$. 
\end{example}

As mentioned in Example \ref{ex31}, experiments with Macaulay2 suggest that the bound in Corollary \ref{c=1} is tight if $G_i^+(I_r)\ne \emptyset$. Therefore, we propose the following more precise form of Conjecture \ref{conj} for this case.

\begin{conj}
\label{con_c=1}
Suppose $c=1$. Let $\Icc=(I_n)_{n\ge 1}$ be a nonzero $\Inc(\N)^i$-invariant chain of monomial ideals with $r= \ind^i(\Icc)$. Assume that $G_i^+(I_r)\neq \emptyset$. Then there is a constant $D(\Icc)$ such that
\[
\reg(I_n)=(\omega(\Icc)-1)n+D(\Icc)\quad \text{for }\ n\gg 0.
\]

In particular, the above statement holds if $\Icc=(I_n)_{n\ge 1}$ is a nonzero $\Inc(\N)$-invariant chain of proper monomial ideals.
\end{conj}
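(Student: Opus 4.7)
The upper bound $\reg(I_n)\le(\omega(\Icc)-1)n+D(\Icc)$ for $n\gg 0$ is immediate from Corollary~\ref{c=1}, once one observes that $G^{(i)}_+(I_r)\ne\emptyset$ forces $\omega(\Icc)\ge 1$, so the outer $\max$ in that corollary collapses to $\omega(\Icc)-1$. The conjecture therefore reduces to producing a matching lower bound with the same slope $\omega-1$, where $\omega:=\omega(\Icc)$, and then to upgrading the two-sided linear estimate to an honest equality.

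The plan is to build, for each $n\gg 0$, a nonvanishing graded Betti number of $I_n$ realizing the slope $\omega-1$. Fix a minimal generator $u_0\in G^{(i)}_+(I_r)$ with $w^{(i)}(u_0)=\omega$; its existence, together with the identity $\omega(\Icc)=\omega^{(i)}(\Icc)$, follows from the Remark after Lemma~\ref{lem_decreasing} combined with a shift argument producing weight-$\omega$ minimal generators in $G^{(i')}_+(I_n)$ for arbitrarily large $i'$ and $n$. Write $u_0=x_{j_0}^{\omega}v_0$ with $x_{j_0}\nmid v_0$. The first attempt is to pick $\pi_1,\dots,\pi_s\in\Inc(\N)^i$ whose images $\pi_t(u_0)$ have pairwise disjoint support in $R_n$ and to form the multidegree $\mu_n=\prod_{t=1}^s\pi_t(u_0)$. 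Since the $\pi_t(u_0)$ form a regular sequence, the Taylor subcomplex at $\mu_n$ supported on $\{\pi_1(u_0),\dots,\pi_s(u_0)\}$ is the Koszul complex and carries a nonzero cycle in homological degree $s-1$; provided no other minimal generator of $I_n$ divides $\mu_n$ (a purely combinatorial check on the finite data $G(I_r)$), this cycle persists in the full Taylor complex of $I_n$, giving $\beta_{s-1,\mu_n}(I_n)\ne 0$ and hence $\reg(I_n)\ge|\mu_n|-(s-1)=s(\deg u_0-1)+1$.

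\emph{The principal obstacle} is that this disjoint packing only achieves $s=\lfloor(n-i)/(\max(u_0)-i)\rfloor$, producing a lower bound of slope $(\deg u_0-1)/(\max(u_0)-i)$, not $\omega-1$. To close this gap one must pack shifts densely enough to expose one fresh pure power $x_j^{\omega}$ in a colon ideal for \emph{every} index $j\in(i,n]$. My intended construction is to set $\pi_t(j_0)=i+t$ for $1\le t\le n-i$, let the $\pi_t(v_0)$ overlap freely in the complementary variables, take $w_n=\prod_t\pi_t(v_0)$, and observe that
\[
(I_n:w_n)\ \supseteq\ \bigl(x_{i+1}^{\omega},\dots,x_n^{\omega}\bigr).
\]
The remaining task is to convert this colon inclusion into a nonvanishing Betti number of $I_n$ itself at a carefully chosen multidegree: since regularity is not monotone under ideal inclusion, one cannot merely invoke $\reg(I_n:w_n)\ge(n-i)(\omega-1)+1$, and instead one must exhibit a specific Koszul cycle of the pure-power complete intersection inside the colon that lifts to a non-boundary in the Taylor complex of $I_n$. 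The key combinatorial input here is that the lcm lattice of $I_n$ is, by $\Inc(\N)^i$-invariance, a periodic extension of the finite lcm lattice of $I_r$, so the obstructions to lifting the cycle are governed by a finite amount of data and should in principle be amenable to an inductive bookkeeping in $n$. Once the slopes match, eventual integer linearity of $\reg(I_n)$ is expected to follow from a separate short-exact-sequence analysis of
\[
0\to I_nR_{n+1}\to I_{n+1}\to I_{n+1}/I_nR_{n+1}\to 0
\]
combined with the Hilbert-series framework of \cite{NR17}, showing that $\reg(I_{n+1})-\reg(I_n)=\omega-1$ for $n\gg 0$. The ``in particular'' case $i=0$ is automatic since, for a proper nonzero $\Inc(\N)$-invariant chain, $G^{(0)}_+(I_r)=G(I_r)\ne\emptyset$.
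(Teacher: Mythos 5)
What you are asked to prove is Conjecture~\ref{con_c=1} in the paper: the authors pose it as open, verifying it only in special cases (Proposition~\ref{c=1_constant} for $\omega(\Icc)\le 1$ and saturated chains, Proposition~\ref{mon_orbit} for a single monomial orbit, Corollary~\ref{thm: artinian chains} for eventually Artinian chains) and crediting the $\Sym(\infty)$-invariant case to Murai \cite{Mu}. There is therefore no proof in the paper to compare against, and your proposal must stand on its own.

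On its own terms, your proposal establishes only the upper bound and candidly labels the rest as a plan. The upper bound is sound: $G^{(i)}_+(I_r)\ne\emptyset$ does force $\omega(\Icc)\ge 1$, and in fact $\omega(\Icc)=\omega^{(i)}(\Icc)$. (To see this, note that any $v\in G^{(i')}_+(I_n)$ with $n\gg 0$ and $i'\ge i$ has the form $v=\pi(u)$ with $u\in G(I_r)$, $\pi\in\Inc(\N)^i_{r,n}$; since $\min(\pi(u))>i'\ge i$ one gets $\min(u)>i$, so $u\in G^{(i)}_+(I_r)$ and $w^{(i')}(v)=w^{(i)}(u)\ge\omega^{(i)}(I_r)$, giving the reverse of the trivial inequality $\omega^{(i')}\le\omega^{(i)}$.) Hence the $\max$ in Corollary~\ref{c=1} collapses and the slope $\omega(\Icc)-1$ is correct for the upper bound. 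The lower bound, however, is where the proof is genuinely missing, and you identify the gap yourself: the colon inclusion $(I_n:w_n)\supseteq(x_{i+1}^\omega,\dots,x_n^\omega)$ is correct but produces neither a Betti number of $I_n$ nor a direct regularity estimate, since $\reg$ is not monotone under ideal inclusion and there is no general mechanism transporting regularity from $I_n:w_n$ down to $I_n$. The proposed ``inductive bookkeeping'' lifting a Koszul cycle of the pure-power complete intersection through the Taylor complex of $I_n$ is exactly the substance of the problem, not a routine verification, and the lcm-lattice periodicity you appeal to is an unproven heuristic here. Finally, even with matching two-sided linear bounds in hand, the displayed equality does not follow automatically: the paper's Theorem~\ref{thm_extremal} shows precisely how delicate that last step is, requiring the extremal-family apparatus of Section~\ref{sec6}, which your sketch of a short exact sequence and Hilbert-series argument does not replace. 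Your angle (Taylor complexes, Koszul cycles, lcm-lattice periodicity) is genuinely different from the paper's evidence — Proposition~\ref{mon_orbit} uses a Cohen--Macaulay short exact sequence and the degree of the $h$-polynomial, and the Artinian case uses the colon recursion of Lemma~\ref{lem:bdl for Artinian} — and could in principle be pursued, but as written it does not close the conjecture.
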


We will verify this conjecture for two special classes of saturated chains. Recall that an $\Inc(\N)^i$-invariant chain $\Icc=(I_n)_{n\ge 1}$ is {saturated} if there is an $\Inc(\N)^i$-invariant ideal $I$ in $R$ such that $I_n = I \cap R_n$ for $n\ge 1$. When this is the case, Conjecture \ref{con_c=1} is true if $I$ is either a squarefree monomial ideal or generated by one monomial orbit. For the proofs we will need the following well-known fact on regularity (see, e.g., \cite[Theorem 20.2]{Pe11}).

\begin{lem} 
   \label{reg mod regular element} 
If $u$ is a homogeneous non-zero-divisor on a finitely generated graded $R_n$-module $M$ with $\deg(u)>0$, then
\begin{equation*}
 \reg(M/uM)=\reg(M)+\deg(u)-1. 
\end{equation*}
\end{lem}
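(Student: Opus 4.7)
The plan is to use the standard multiplication-by-$u$ short exact sequence together with the vanishing of a connecting map on $\Tor$. Write $d=\deg(u)>0$. Since $u$ is a non-zero-divisor on $M$, we have a short exact sequence of graded $R_n$-modules
\[
0\to M(-d)\xrightarrow{\cdot u} M\to M/uM\to 0.
\]
Applying $\Tor^{R_n}(K,-)$ produces a long exact sequence relating the graded Betti numbers of $M$ and $M/uM$.

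The key observation is that each induced map $\Tor^{R_n}_i(K,M(-d))\to\Tor^{R_n}_i(K,M)$ vanishes. Indeed, this map is induced by multiplication by $u$, and $u$ lies in the graded maximal ideal $\mathfrak{m}=(R_n)_{>0}$, which acts as zero on $K=R_n/\mathfrak{m}$ and hence on every $\Tor^{R_n}_i(K,M)$. Consequently, the long exact sequence breaks into short exact sequences
\[
0\to\Tor^{R_n}_i(K,M)_j\to\Tor^{R_n}_i(K,M/uM)_j\to\Tor^{R_n}_{i-1}(K,M)_{j-d}\to 0
\]
for every $i\ge 0$ and $j\in\Z$. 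Thus $\Tor^{R_n}_i(K,M/uM)_j\neq 0$ if and only if $\Tor^{R_n}_i(K,M)_j\neq 0$ or $\Tor^{R_n}_{i-1}(K,M)_{j-d}\neq 0$.

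It then remains to compute the maximum of $j-i$ over indices where the above $\Tor$s of $M/uM$ are nonzero. The first summand contributes $j-i\le\reg(M)$, attained by definition. The second yields $j-i=(j-d)-(i-1)+(d-1)\le\reg(M)+d-1$, again attained. Since $d\ge 1$, the overall maximum is $\reg(M)+d-1$, giving the claimed equality. The only genuinely nontrivial ingredient is the vanishing of the connecting maps; I expect that to be the single step worth dwelling on, with the rest being an immediate maximum computation from the definition of regularity.
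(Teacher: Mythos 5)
Your argument is correct and is the standard one: the mapping-cone/long-exact-sequence in $\Tor$ together with the observation that $u\in\mathfrak{m}$ kills every $\Tor^{R_n}_i(K,M)$, splitting the long exact sequence into short exact sequences from which both the upper bound and the attainment of $\reg(M)+\deg(u)-1$ follow. The paper does not prove this lemma itself but cites \cite[Theorem 20.2]{Pe11}, and your proof is essentially the argument that reference supplies.
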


\begin{prop}
        \label{c=1_constant}
	Suppose $c=1$. Let $\Icc=(I_n)_{n\ge 1}$ be a nonzero saturated $\Inc(\N)^i$-invariant chain of monomial ideals. If $\omega(\Icc)\le 1$ (e.g., one of Conditions (a)-(c) above Corollary \ref{c=1_bounded} is satisfied), then the sequence $(\reg(I_n))_{n\ge 1}$ is eventually constant. 
	
In particular, the conclusion is true if every $I_n$ is a squarefree monomial ideal. 
\end{prop}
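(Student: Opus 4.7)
The plan is to combine boundedness with eventual monotonicity. By Corollary~\ref{c=1}, the hypothesis $\omega(\Icc)\le 1$ forces $\reg(I_n)\le D(\Icc)$ for $n\gg 0$, so the integer sequence $(\reg(I_n))_{n\ge 1}$ is bounded; a bounded, eventually non-decreasing sequence of integers must be eventually constant. Thus the task reduces to showing $\reg(I_n)\le \reg(I_{n+1})$ for all $n\gg 0$.

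To establish this monotonicity I would exploit the saturation. For a saturated chain of monomial ideals, the minimal generators of $I_{n+1}$ not divisible by $x_{n+1}$ coincide with $G(I_n)$, hence $R_{n+1}/(I_{n+1}+(x_{n+1}))\cong R_n/I_n$. Setting $J_n:=(I_{n+1}:x_{n+1})$ this yields the short exact sequence
\[
0\longrightarrow (R_{n+1}/J_n)(-1)\xrightarrow{\,\cdot x_{n+1}\,} R_{n+1}/I_{n+1}\longrightarrow R_n/I_n\longrightarrow 0.
\]
The standard regularity estimate along a short exact sequence gives $\reg(R_n/I_n)\le\max\{\reg(R_{n+1}/I_{n+1}),\,\reg(R_{n+1}/J_n)\}$, so it remains to show $\reg(R_{n+1}/J_n)\le\reg(R_{n+1}/I_{n+1})$ for $n\gg 0$.

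I would first handle the ``in particular'' squarefree case directly. Let $\Delta_{n+1}$ be the Stanley--Reisner complex of $I_{n+1}$; then $J_n$ is (the extension to $R_{n+1}$ of) the Stanley--Reisner ideal of $\mathrm{link}_{\Delta_{n+1}}(x_{n+1})$. By Hochster's formula, $\reg(R_{n+1}/I_{n+1})$ and $\reg(R_{n+1}/J_n)$ are each one more than the maximal $k$ for which some restriction has nontrivial $\tilde H_k$. A Mayer--Vietoris argument, using the decomposition of $(\Delta_{n+1})_{W\cup\{x_{n+1}\}}$ into its deletion and its contractible star with intersection the link, shows that any nontrivial $\tilde H_k$ of the link forces nontrivial $\tilde H_k$ of the deletion or nontrivial $\tilde H_{k+1}$ of the full complex, each of which is in turn bounded above by $\reg(R_{n+1}/I_{n+1})-1$; this delivers the required inequality. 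For the general case $\omega(\Icc)\le 1$, reduce to the squarefree case by polarization: the hypothesis means that, after enlarging $i$, the non-squarefree behavior of $G(I_n)$ is controlled, so one can polarize uniformly across the chain to obtain a saturated invariant chain of squarefree ideals with identical regularities and invoke the squarefree argument.

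The main obstacle will be making the polarization step uniform in $n$. This is clean when $G^{(i)}_+(I_n)$ is eventually empty, since then $\omega^{(i)}(I_n)\le 1$ bounds every generator's weight in late variables and the non-squarefree exponents are confined to the fixed finite set $\{x_1,\dots,x_i\}$. The subtlety arises when $G^{(i)}_+(I_n)\ne\emptyset$ for all large $n$: there the distinguished squarefree generator in $G^{(i)}_+$ coexists with generators of higher weight in the late variables, so the polarization must be performed equivariantly—potentially introducing an additional row of variables—and one must verify that the resulting chain remains saturated and $\Inc(\N)^i$-invariant with unchanged regularity.
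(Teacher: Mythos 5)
Your high-level strategy is exactly the paper's: establish boundedness via Corollary~\ref{c=1} and then establish eventual monotonicity $\reg(I_n)\le\reg(I_{n+1})$, so that a bounded, eventually non-decreasing integer sequence must stabilize. The identification of $G(I_n)$ inside $G(I_{n+1})$ via saturation, and the resulting short exact sequence, are both correct.

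However, your route to the monotonicity has a genuine gap, and it misses a much simpler argument that is already available in the paper. After writing the exact sequence
$0\to(R_{n+1}/J_n)(-1)\to R_{n+1}/I_{n+1}\to R_n/I_n\to 0$,
you reduce to proving $\reg(R_{n+1}/J_n)\le\reg(R_{n+1}/I_{n+1})$, i.e.\ $\reg(I_{n+1}:x_{n+1})\le\reg(I_{n+1})$. This is precisely one half of the paper's Lemma~\ref{cor:bdl} (the ``$\max\{\reg(J:x),\reg(J,x)\}\le\reg(J)$'' inequality, valid for arbitrary monomial ideals, cited from [CH+]). You did not use that lemma; instead you attempt to reprove a special case of it from scratch via Hochster's formula and Mayer--Vietoris in the squarefree case, followed by an equivariant polarization to treat the case $\omega(\Icc)\le 1$ in general. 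The polarization step, which you yourself flag as the main obstacle, is not carried out: making it uniform in $n$, preserving saturation and $\Inc(\N)^i$-invariance, and tracking how the new variables interact with the order $\le$ and the stability index, is nontrivial and is left as a gap. Since the needed inequality holds for all monomial ideals by Lemma~\ref{cor:bdl}, the detour through Stanley--Reisner theory and polarization is both unnecessary and unfinished.

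The paper's monotonicity argument avoids the colon ideal entirely and is just three equalities and one inequality: saturation gives $\langle I_{n+1},x_{n+1}\rangle=\langle I_n,x_{n+1}\rangle$ for $n\ge\ind^i(\Icc)$; Lemma~\ref{reg mod regular element} gives $\reg(\langle I_n,x_{n+1}\rangle)=\reg(I_n)$ because $x_{n+1}$ is a non-zero-divisor on $R_{n+1}/\langle I_n\rangle$; and the lower bound in Lemma~\ref{cor:bdl} gives $\reg(\langle I_{n+1},x_{n+1}\rangle)\le\reg(I_{n+1})$. You should replace your Stanley--Reisner/polarization argument with a direct appeal to Lemma~\ref{cor:bdl} (either to close the gap in your SES approach, or, more simply, to run the paper's chain of (in)equalities).
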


\begin{proof} 
 Since $\Icc$ is saturated, it is easy to see that
 \[
  \langle I_{n+1},x_{n+1}\rangle=\langle I_{n},x_{n+1}\rangle \quad \text{for all }\ n\ge \ind^i(\Icc).
 \]
Hence, Lemma \ref{cor:bdl} below gives
\[
 \reg(I_{n+1})\ge \reg(\langle I_{n+1},x_{n+1}\rangle)=\reg(\langle I_{n},x_{n+1}\rangle)=\reg (I_n),
\]
where the last equality follows from Lemma \ref{reg mod regular element} since $x_{n+1}$ is a non-zero-divisor on $R_{n+1}/\langle I_n \rangle$. On the other hand, the sequence $(\reg(I_n))_{n\ge 1}$ is bounded by Corollary \ref{c=1}. Therefore, this sequence must be eventually constant.
\end{proof}

\begin{remark}
 Conca and Varbaro \cite[Corollary 2.7]{CV} have shown that for a graded ideal $J$ in a Noetherian polynomial ring $S$ and a monomial order $\le$ on $S$, one has $\reg(J)=\reg(\ini_{\le}(J))$ if $\ini_{\le}(J)$ is a squarefree monomial ideal. So if $c=1$ and $\Icc=(I_n)_{n\ge 1}$ is an $\Inc(\N)^i$-invariant chain of graded ideals such that $(\ini_{\le}(I_n))_{n\ge 1}$ is saturated and consists of squarefree monomial ideals for a suitable monomial order $\le$ on $R$, then Proposition \ref{c=1_constant} implies that $\reg(I_n)$ is eventually constant, verifying Conjecture \ref{conj} in this special case. 
\end{remark}

We now consider invariant ideals generated by one monomial orbit. In order to verify Conjecture \ref{con_c=1} for such ideals we recall a basic fact on regularity of Cohen-Macaulay modules. By Hilbert's theorem (see, e.g., \cite[Corollary 4.1.8]{BH}), the Hilbert series of a nonzero finitely generated graded $R_n$-module $M$ of dimension $d$ can be uniquely written in the form
\[
 H_M(t)=\frac{Q_M(t)}{(1-t)^d}\quad\text{with }\ Q_M(t)\in\Z[t,t^{-1}]\ \text{ and }\ Q_M(1)\ne 0.
\]
When $M$ is a Cohen-Macaulay module, the coefficients of $Q_M(t)$ are non-negative  (see \cite[Corollary 4.1.10]{BH}). Moreover, it follows from \cite[Theorem 4.4.3(c)]{BH} and \cite[Corollary 4.8]{Ei05}) that
\begin{equation}
 \label{eq414}
 \reg(M)=\deg Q_M(t).
\end{equation}

Note that equivariant Hilbert series of ideals generated by one monomial orbit have been studied in \cite{GN}. The next result employs the induction method used in that paper.

\begin{prop}
\label{mon_orbit} 
Let $c = 1$ and consider a monomial $u = x_{\mu_1}^{a_1} \cdots x_{\mu_d}^{a_d} \in R$,
where $\mu_1 < \cdots< \mu_d$ and $a_1,\dots,a_d \in \N$. Let  $\Icc=(I_n)_{n\ge 1}$ be the 
$\Inc(\N)$-invariant chain with $I_n = \langle \Inc(\N)_{\mu_d, n} (u) \rangle$ if $n \ge \mu_d$. Then  
$\omega (\Icc) = \max\{ a_1,\ldots,a_d\}$ and 
\[
\reg (I_n) =  (\omega (\Icc) -1)(n- \mu_d) + a_1 + \cdots + a_d\quad\text{for all }\ n \ge \mu_d.
\]
\end{prop}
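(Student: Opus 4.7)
My plan is to reduce to computing a Hilbert series by establishing that $R_n/I_n$ is Cohen-Macaulay, and then read off the regularity from the degree of its Hilbert series numerator. The argument is a double induction on $d$ (outer) and $n$ (inner). The equality $\omega(\Icc) = \max\{a_1,\ldots,a_d\}$ is immediate from the definitions: every minimal generator of $I_n$ has the form $x_{j_1}^{a_1}\cdots x_{j_d}^{a_d}$, and for any $i\ge 0$ and $n$ sufficiently large there are such generators with $j_1>i$, all of whose variables then have subscript exceeding $i$, so that $w^{(i)}$ equals $\max\{a_k\}$.

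Set $u' = x_{\mu_1}^{a_1}\cdots x_{\mu_{d-1}}^{a_{d-1}}$, let $\Icc' = (I'_m)$ be the associated chain, and put $n' = n-\mu_d+\mu_{d-1}$. A direct analysis of the orbits of $u$ and $u'$, using the containment $I_{n-1}\subseteq I'_{n'}$ forced by the gap condition $j_{d-1}\le j_d-(\mu_d-\mu_{d-1})$, yields
\[
(I_n : x_n^{a_d}) = I'_{n'}R_n \qquad\text{and}\qquad (I_n, x_n^{a_d}) = (I_{n-1}, x_n^{a_d}),
\]
and hence the short exact sequence
\[
0 \longrightarrow (R_n/I'_{n'})(-a_d) \xrightarrow{\cdot x_n^{a_d}} R_n/I_n \longrightarrow R_n/(I_{n-1}, x_n^{a_d}) \longrightarrow 0.
\]
The key claim, established by the same induction (base cases: $d=1$ regular sequence and $n=\mu_d$ principal ideal), is that $R_n/I_n$ is Cohen-Macaulay of dimension $\mu_d-1$. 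For the inductive step, the hypothesis gives that $R_{n'}/I'_{n'}$ and $R_{n-1}/I_{n-1}$ are Cohen-Macaulay of dimensions $\mu_{d-1}-1$ and $\mu_d-1$; extending scalars by polynomial variables preserves Cohen-Macaulayness, and quotienting the latter by the non-zero-divisor $x_n^{a_d}$ drops its dimension to $\mu_d-1$. Both flanking modules in the sequence are thus Cohen-Macaulay of the common dimension $\mu_d-1$; the standard depth lemma upgrades $R_n/I_n$ to the same property, and additivity of Hilbert series yields the recursion
\[
Q_n(t) = t^{a_d}Q'_{n'}(t) + (1 + t + \cdots + t^{a_d-1})Q_{n-1}(t)
\]
for the numerator $Q_n$ of $H(R_n/I_n) = Q_n(t)/(1-t)^{\mu_d-1}$.

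By Cohen-Macaulayness and \eqref{eq414}, $\reg(I_n) = \deg Q_n + 1$. A direct computation gives $\deg(t^{a_d}Q'_{n'}) = (\omega'-1)(n-\mu_d)+\sum_k a_k-1$, which equals the target $(\omega-1)(n-\mu_d)+\sum a_k-1$ precisely when $\omega' := \max\{a_1,\ldots,a_{d-1}\}=\omega$, while $\deg((1+\cdots+t^{a_d-1})Q_{n-1}) = (\omega-1)(n-1-\mu_d)+\sum a_k+a_d-2$, which equals the target precisely when $a_d=\omega$. Since $\omega = \max\{\omega',a_d\}$, at least one of these equalities always holds. The non-negativity of the coefficients of $Q'_{n'}$ and $Q_{n-1}$ (they are $h$-vectors of Cohen-Macaulay modules; cf.\ \cite[Corollary 4.1.10]{BH}) prevents any cancellation of leading coefficients in the recursion, so $\deg Q_n$ equals the target, finishing the proof.

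The step I expect to be hardest is correctly tracking Cohen-Macaulayness through the induction and, inside that, verifying the colon identity $(I_n : x_n^{a_d}) = I'_{n'}R_n$ via the combinatorial containment $I_{n-1}\subseteq I'_{n'}$. Routing the regularity computation through the Hilbert series numerator is crucial: a naive application of the short-exact-sequence regularity inequality $\reg(R_n/I_n)\le\max\{\reg(A),\reg(C)\}$ leaves $\reg(I_n)$ ambiguous in cases such as $a_1 = a_d = \omega-1$ with $\omega$ attained strictly in the interior of $u$, since both individual terms then sit strictly below the target and only the $h$-vector non-negativity argument above rules out leading-term cancellation.
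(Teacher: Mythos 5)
Your argument is essentially the same as the paper's: both prove the identity for $\omega(\Icc)$ directly, set up the short exact sequence $0 \to (R_n/I'_{n'})(-a_d) \to R_n/I_n \to R_n/(I_{n-1},x_n^{a_d}) \to 0$ (the paper cites \cite[Corollary 2.2]{GN} for it, while you sketch a direct verification via the colon identity and the gap condition), use Cohen-Macaulayness of all three modules, and pass to Hilbert-series numerators whose non-negative coefficients prevent leading-term cancellation, so $\deg Q_{R_n/I_n}$ is the maximum of the two summand degrees. Your final degree bookkeeping matches the paper's recursion $\reg(I_n)=\max\{\reg(J_{n-\nu_d})+a_d,\ \reg(I_{n-1})+a_d-1\}$ and its two-case analysis according to whether $\omega=a_d$ or $\omega=\omega'$.
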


\begin{proof}
 It is evident that $\omega (\Icc) = \max\{ a_1,\ldots,a_d\}$. To prove the remaining assertion we use induction on $d \ge 1$. If $d=1$, then $I_n = \langle x_{\mu_1}^{a_1},\ldots,x_{n}^{a_1}\rangle$ for $n \ge \mu_1$. Note that the generators of $I_n$ form an $R_n$-regular sequence. So using Lemma \ref{reg mod regular element} repeatedly one gets
\[
\reg (I_n) = \reg(R_n/I_n)+1=(a_1 -1)(n-\mu_1+1)+1= (a_1 -1)(n-\mu_1)+a_1
\]
for all $n \ge \mu_1$, as claimed.
Let $d \ge 2$. Consider the monomial $v = x_{\mu_1}^{a_1} \cdots x_{\mu_{d-1}}^{a_{d-1}}$ and the chain $\Jcc=(J_n)_{n\ge 1}$ with
 \[
 J_n= \langle \Inc(\N)_{\mu_{d-1}, n} (v)\rangle\quad\text{if }\ n\ge \mu_{d-1}.
 \]
 Then for $n \ge \mu_d$ we have the following short exact sequence
\[
 0 \to (R_n/\langle J_{n-\nu_d}\rangle_{R_n}) (-a_d) \to R_n/I_n \to  R_n/ \langle I_{n-1}, x_n^{a_d}\rangle_{R_n} \to 0,
\]
where $\nu_d = \mu_d - \mu_{d-1}$, and moreover, all the nonzero modules in this sequence are Cohen-Macaulay of dimension $\mu_d -1$; see the proof of \cite[Corollary 2.2]{GN}. So the additivity of Hilbert series gives
\[
 Q_{ R_n/I_n}(t)=t^{a_d}Q_{ R_n/\langle J_{n-\nu_d}\rangle}(t)+Q_{ R_n/\langle I_{n-1}, x_n^{a_d}\rangle}(t).
\]
Since the coefficients of the polynomials in this equation are non-negative, one gets
\[
 \deg Q_{ R_n/I_n}(t)=\max\{\deg Q_{ R_n/\langle J_{n-\nu_d}\rangle}(t)+a_d,\; \deg Q_{ R_n/\langle I_{n-1}, x_n^{a_d}\rangle}(t)\}.
\]
Thus, it follows from Equation \eqref{eq414} that
\[
 \reg (R_n/I_n) = \max \{\reg (R_n/\langle J_{n-\nu_d}\rangle_{R_n}) +a_d,\ \reg (R_n/ \langle I_{n-1}, x_n^{a_d}\rangle_{R_n})\}.
\]
This together with Lemma \ref{reg mod regular element} gives
\begin{equation}
      \label{cor:recursion}
 \reg (I_n) = \max \{\reg (J_{n-\nu_d}) + a_d,\ \reg (I_{n-1}) + a_d -1\}.
\end{equation}

Now we prove the desired assertion by a second induction on $n \ge \mu_d$. If $n = \mu_d$, then $I_n = \langle x_{\mu_1}^{a_1} \cdots x_{\mu_d}^{a_d}\rangle$, and so again by Lemma \ref{reg mod regular element},
$$
\reg (I_n) = \reg(R_n/I_n)+1= a_1 + \cdots + a_d.
$$

Consider the case $n > \mu_d$.
Set $\omega' =\omega (\Jcc)= \max\{ a_1,\ldots,a_{d-1}\}$ and $\omega =\omega (\Icc)$. Then $\omega = \max \{a_d, \omega'\}$. Notice that $n - \nu_d \ge \mu_{d-1}$ is equivalent to $n \ge \mu_d$. Thus, the induction hypothesis on $d$ gives
\begin{align*}
\reg (J_{n-\nu_d}) & = (\omega'-1)(n - \nu_d - \mu_{d-1})  + a_1 + \cdots + a_{d-1} \\
& =  (\omega'-1)(n - \mu_{d}) + a_1 + \cdots + a_{d-1}
\end{align*}
for $n \ge \mu_d$.
Hence, Equation~\eqref{cor:recursion} and the induction hypothesis on $n \ge \mu_d$ yield
\begin{align*}
\reg (I_n)  = \max \{ &(\omega'-1)(n - \mu_{d})  + a_1 + \cdots + a_{d-1} + a_d , \\
 &(\omega-1)(n - \mu_d-1) +  a_1 + \cdots + a_d + a_d  -1\}\\
 = \max \{ &(\omega'-1)(n - \mu_{d}),\  (\omega-1)(n - \mu_d) - \omega + a_d\} + a_1 + \cdots + a_d.
\end{align*}
If $\omega=a_d$, then the second term gives the maximum. If $\omega>a_d$, then $\omega = \omega'$, and the first term determines the maximum. This concludes the proof.
\end{proof}

Note that Proposition \ref{mon_orbit} can be extended to the case where $u$ is a monomial in $R$ with $c>1$, providing evidence for Conjecture \ref{conj}. We leave the details to the interested reader.

\begin{remark}
Conjecture \ref{con_c=1} has recently been confirmed for any $\Sym(\infty)$-invariant chain of monomial ideals by Murai \cite{Mu}, who employs combinatorial methods to study the asymptotic behavior of Betti tables of ideals in such a chain. In a recent talk, Raicu also announced that he independently obtained the same result by a different approach. 
\end{remark}


\section{Proof of Theorem \ref{thm_bounding_reg_monomial}}\label{sec5}

This section is devoted to the proof of Theorem \ref{thm_bounding_reg_monomial}. As pointed out in Remark \ref{rem: equi}, we only need to prove Corollary \ref{cor_i_bound}. The proof of the latter result is divided into two steps. In the first step we prove a slightly more general result, showing that Corollary \ref{cor_i_bound} holds for any bound that satisfies certain conditions. The fact that those conditions are fulfilled by the bound given in Corollary \ref{cor_i_bound} is shown in the second step.

Let us begin with some preparations. The following recursive formula for the regularity of a monomial ideal, which extends \cite[Lemma 2.10]{DHS}, was proved in \cite[Corollary 3.3 and Theorem 4.7]{CH+}.

\begin{lem}
	\label{cor:bdl}
	If $S$ is a Noetherian polynomial ring over $K$, $x$ any variable of $S$, and $J \subseteq S$ any nonzero monomial ideal, then one has
	\[
	\max\{\reg (J : x),\ \reg (J, x) \} \le 
	\reg (J) \in \{\reg (J : x) +  1,\ \reg (J, x)\}.
	\]
\end{lem}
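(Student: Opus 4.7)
The proof plan is to combine the Koszul-type short exact sequence
$$0 \to (S/(J:x))(-1) \xrightarrow{\cdot x} S/J \to S/(J,x) \to 0$$
(valid for any ideal and any variable) with a technical input that uses the monomial hypothesis. Passing to the long exact sequence of $\Tor^S_\bullet(K,-)$ and tracking internal degrees, I extract the standard upper bound $\reg(J) \le \max\{\reg(J:x)+1,\ \reg(J,x)\}$ by the sandwich argument: a nonzero $\Tor$ of $S/J$ in an extremal slot $(i, i+d)$ with $d=\reg(S/J)$ must, via the LES, be hit by a nonzero $\Tor_i(K, S/(J:x))_{i+d-1}$ or map nontrivially to a nonzero $\Tor_i(K, S/(J,x))_{i+d}$, forcing either $\reg(S/(J:x)) \ge d-1$ or $\reg(S/(J,x)) \ge d$. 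This step does not use monomiality.

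The inequalities $\reg(J:x) \le \reg(J)$ and $\reg(J,x) \le \reg(J)$ form the heart of the lemma. The key step is the first one, a classical fact about monomial ideals that can be proved, for instance, by polarizing to reduce to the squarefree case and then invoking Terai's theorem to translate the regularity bound into a statement about the projective dimension of the Alexander dual, or by direct multigraded Betti number analysis tracking how minimal generators and syzygies transform under colon with a variable. Given this, the second short exact sequence
$$0 \to J \to (J,x) \to (J,x)/J \to 0$$
together with the isomorphism $(J,x)/J \cong (S/(J:x))(-1)$ (the quotient is cyclic, generated by the class of $x$, with annihilator $J:x$) and the same LES argument yields $\reg(J,x) \le \max\{\reg(J),\reg(J:x)\} = \reg(J)$.

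For the containment $\reg(J) \in \{\reg(J:x)+1,\ \reg(J,x)\}$, I combine $\reg(J) \le \max\{\reg(J:x)+1,\ \reg(J,x)\}$ with the two bounds just established. If $\reg(J) = \reg(J,x)$ we are done; otherwise $\reg(J,x) < \reg(J) \le \reg(J:x)+1$, and $\reg(J:x) \le \reg(J)$ leaves only $\reg(J:x) \in \{\reg(J)-1,\ \reg(J)\}$. The borderline value $\reg(J:x) = \reg(J)$ is eliminated by lifting an extremal $\Tor$ of $S/(J:x)$ through the LES: the connecting multiplication-by-$x$ map cannot send it nontrivially into $\Tor_\bullet(K, S/J)$ without pushing $\reg(S/J)$ up by one, so by exactness it must come from $\Tor_{\bullet+1}(K, S/(J,x))$ in the corresponding slot, forcing $\reg(J,x) \ge \reg(J)$ and contradicting the assumption. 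Hence $\reg(J:x) = \reg(J)-1$ and $\reg(J) = \reg(J:x)+1$. The main obstacle is the monomial-specific bound $\reg(J:x) \le \reg(J)$, which genuinely needs the hypothesis that $J$ is monomial; the remainder is careful bookkeeping with the Tor LES.
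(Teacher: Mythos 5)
The paper does not prove this lemma itself; it attributes it to \cite{CH+} (Caviglia--H\`a--Herzog--Kummini--Terai--Trung, Corollary~3.3 and Theorem~4.7), noting that it extends \cite[Lemma~2.10]{DHS}. So there is no in-paper proof to compare your proposal against, and your argument has to stand on its own.

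Your homological bookkeeping is correct and is the standard reduction: the Koszul sequence $0 \to (S/(J:x))(-1) \xrightarrow{\,\cdot x\,} S/J \to S/(J,x) \to 0$, together with the three regularity inequalities attached to any short exact sequence of graded modules, reduces the whole lemma to a single monomial-specific inequality. (Your ``borderline elimination'' is exactly the general inequality $\reg(A)\le\max\{\reg(B),\ \reg(C)+1\}$ for $0\to A\to B\to C\to 0$ applied to this sequence; once one of $\reg(J:x)\le\reg(J)$ or $\reg(J,x)\le\reg(J)$ is known, the three SES inequalities finish by a short case split, so the Tor-chase at the end is redundant.) The genuine gap is exactly where you yourself locate the heart of the matter: the inequality $\reg(J:x)\le\reg(J)$ for a monomial ideal $J$, which is false for general graded ideals and is the entire nontrivial content of the lemma. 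Your two suggested proofs are only gestures. The polarization route needs a separate verification that $(J:x)^{\mathrm{pol}}$ agrees, up to a relabeling of polarization variables, with the quotient of $J^{\mathrm{pol}}$ by the first polarization variable of $x$ (true, but it needs an argument about how dividing by $x$ shifts exponents uniformly across generators), and the passage through Terai duality then requires describing the Alexander dual of $J:x$ and comparing projective dimensions --- another nontrivial layer. ``Direct multigraded Betti analysis'' is likewise not a one-liner, since colon by a variable can change the minimal generating set substantially. A cleaner self-contained route is to prove $\reg(J,x)\le\reg(J)$ instead: writing $Y$ for the variables other than $x$ and $J[Y]\subseteq K[Y]$ for the ideal generated by the minimal generators of $J$ not divisible by $x$, one has $\reg(J,x)=\reg(J[Y])$ because $x$ is regular modulo $J[Y]$, and $\beta_{i,a}(K[Y]/J[Y])=\beta_{i,a}(S/J)$ for every multidegree $a$ supported on $Y$, because the degree-$a$ strand of the Koszul complex computing $\Tor_i(K,S/J)_a$ depends only on those generators of $J$ dividing $x^a$. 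This restriction inequality gives $\reg(J,x)\le\reg(J)$ directly, after which the SES delivers $\reg(J:x)\le\reg(J)$ and the rest of your case split goes through.
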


Recall that the lower bound on $\reg (J)$ in the previous lemma was used in the proof of Proposition \ref{c=1_constant}. For our purpose below, it suffices to consider the containment statement of the lemma. Repeatedly applying this statement, we obtain:

\begin{cor}
	\label{cor:repeat bdl}
	Let $S$ be a Noetherian polynomial ring over $K$, $x$ a variable of $S$, and $J \subseteq S$ a nonzero monomial ideal. Let $d\ge 0$ be an integer such that $J:x^d=J:x^{d+1}$. Then 
	\[
	\reg (J) \in \{\reg (J:x^k,x)+k \mid 0\le k \le d \}.
	\]
\end{cor}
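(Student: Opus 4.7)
The plan is to prove this by induction on $d \ge 0$, simply iterating the containment statement of Lemma \ref{cor:bdl} and tracking how the stability hypothesis $J:x^d = J:x^{d+1}$ propagates.

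For the base case $d = 0$, the hypothesis reads $J = J:x$, which means that $x$ is a nonzerodivisor modulo $J$. In particular $\reg(J:x) + 1 = \reg(J) + 1 > \reg(J)$, so the only element of the set $\{\reg(J:x)+1,\ \reg(J,x)\}$ that can equal $\reg(J)$ is $\reg(J,x)$. Thus $\reg(J) = \reg(J,x) = \reg(J:x^0, x) + 0$, as required.

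For the inductive step, assume the conclusion holds (for all nonzero monomial ideals and all variables) with $d$ replaced by $d-1$. Apply Lemma \ref{cor:bdl} to $J$ itself. If $\reg(J) = \reg(J,x)$, then the conclusion holds with $k = 0$. Otherwise $\reg(J) = \reg(J:x) + 1$, and we apply the induction hypothesis to the ideal $J' := J:x$ in place of $J$. The key observation is that the stability condition transfers: for every $k \ge 0$ one has $J' : x^k = J : x^{k+1}$, so
\[
J' : x^{d-1} = J : x^d = J : x^{d+1} = J' : x^d,
\]
which is precisely the hypothesis needed to invoke the induction for $J'$ with exponent $d-1$. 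This yields
\[
\reg(J') \in \{\reg(J' : x^k, x) + k \mid 0 \le k \le d-1\} = \{\reg(J : x^{k+1}, x) + k \mid 0 \le k \le d-1\}.
\]
Reindexing by $j = k+1$ and adding $1$, we get $\reg(J) = \reg(J') + 1 \in \{\reg(J : x^j, x) + j \mid 1 \le j \le d\}$. Combined with the first case, this proves $\reg(J) \in \{\reg(J : x^k, x) + k \mid 0 \le k \le d\}$.

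I do not expect any real obstacle here: the argument is essentially bookkeeping on top of Lemma \ref{cor:bdl}. The only point that requires a moment's care is the propagation of the stability hypothesis to $J:x$, but this is automatic from the identity $(J:x):x^k = J:x^{k+1}$.
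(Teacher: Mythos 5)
Your proof is correct and essentially mirrors the paper's argument, packaging the same iteration of Lemma~\ref{cor:bdl} as a formal induction on $d$. The one small variation is the base/terminal step: the paper invokes Lemma~\ref{reg mod regular element} (regular element criterion) to conclude $\reg(J:x^d)=\reg(J:x^d,x)$, whereas you extract the same conclusion directly from Lemma~\ref{cor:bdl} by noting that $J:x^d=J:x^{d+1}$ rules out the alternative $\reg(J:x^d)=\reg(J:x^{d+1})+1$. That is a slight economy, but the substance of the two proofs is the same.
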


\begin{proof}
	Using Lemma \ref{cor:bdl} we get
	\[
	\reg (J : x^k) \in \{\reg (J : x^{k+1}) +  1,\ \reg (J : x^k, x)\}
	\]
	for every integer $k \ge 0$. By assumption, $x$ is a non-zero-divisor on $S/(J: x^d)$. So according to Lemma \ref{reg mod regular element},
	\[
	\reg (J : x^d) = \reg (J : x^d, x).
	\]
    Combining this with the first estimate for $k = 0,\ldots,d-1$, the result follows.
\end{proof}

In the next lemma we apply the previous result to $\Inc(\N)^i$-invariant chains of monomial ideals, for which we need some more notation. Let $\Icc=(I_n)_{n\ge 1}$ be a nonzero $\Inc(\N)^i$-invariant chain of monomial ideals with $r=\ind^i(\Icc)$. We write $\delta(I_r)$ for the maximal degree of a monomial in $G(I_r)$ and set
\[
q(\Icc)=\sum_{j=0}^{\delta(I_r)}\dim_K (R_r/I_r)_j.
\]
Moreover, let $\sigma_i\in\Inc(\N)^i$ denote the \emph{$i$-shift} defined as follows:
\[
\sigma_i(j) =
\begin{cases}
j &\text{if }\ 1\le j\le i,\\
j+1 &\text{if }\ j\ge i+1.
\end{cases}
\]

\begin{lem}
	\label{lem_linearform_manysteps}
	Let $\Icc=(I_n)_{n\ge 1}$ be a nonzero $\Inc(\N)^i$-invariant chain of monomial ideals with $r=\ind^i(\Icc)$. 
	For each ${\bf e}\in \mathbb{Z}^c_{\ge0}$ consider a chain of monomial ideals $\Icc_{\bf e}=(I_{{\bf e},n})_{n\ge 1}$
	given by
	\[
	I_{{\bf e},n}=\begin{cases}
	0 &\text{if }\ 1\le n\le r,\\
	\langle I_n:x_{1,i+1}^{e_1}\cdots x_{c,i+1}^{e_c},x_{1,i+1},\ldots,x_{c,i+1}\rangle &\text{if }\  n\ge r+1.
	\end{cases}
	\]
	Fix an integer $n_0 \ge r+1$. 
	Set
	\[
	\begin{aligned}
	d_k&= \max\{e\ge 0\mid x_{k,i+1}^e ~ \text{divides some monomial in $G(I_{n_0})$}\}\quad \text{for } k\in [c],\\
	E&=\{{\bf e}=(e_1,\ldots,e_c)\in \mathbb{Z}^c\mid 0\le e_k\le d_k\ \text{ for all } k\in [c]\}.
	\end{aligned}
	\]
Then the following statements hold:
	\begin{enumerate}
	 \item
	 $\Icc_{\bf e}$ is an $\Inc(\N)^{i+1}$-invariant chain with $\ind^{i+1}(\Icc_{\bf e})= r+1$.
	 \item
	 We have
	\[
	\reg {(I_n)} \in \{\reg (I_{{\bf e},n})+|{\bf e}|\mid {\bf e}\in E\} \quad \text{for all }\ n\ge n_0,
	\]
	where $|{\bf e}|=e_1+\cdots+e_c$.
        \item
	We have
	\[
	 q(\Icc_{\bf e})\le q(\Icc)\quad\text{for all }\ {\bf e}\in \mathbb{Z}^c_{\ge0},
	\]
	and if the equality holds, then
	\[
	I_{{\bf e},n+1}=\langle\sigma_i(I_n),x_{1,i+1},\ldots,x_{c,i+1}\rangle\ \text{ and }\  \reg(I_{{\bf e},n+1})= \reg(I_n)\quad \text{for all }\ n\ge r.
	\]
	\end{enumerate}
\end{lem}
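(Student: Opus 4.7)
The plan is to prove the three parts in turn. The $\Inc(\N)^{i+1}$-invariance of $\Icc_{\bf e}$ is a routine check once one notes that every $\pi\in\Inc(\N)^{i+1}$ fixes the monomial $m:=x_{1,i+1}^{e_1}\cdots x_{c,i+1}^{e_c}$, so that $\pi(I_n:m)\subseteq \pi(I_n):m\subseteq I_{n+1}:m$. Concerning the claim $\ind^{i+1}(\Icc_{\bf e})=r+1$, the inequality $\ge r+1$ is immediate from $I_{{\bf e},n}=0$ for $n\le r$, so the essential task is to verify that
\[
I_{{\bf e},n+1}=\langle \Inc(\N)^{i+1}_{n,n+1}(I_{{\bf e},n})\rangle_{R_{n+1}}
\]
holds for every $n\ge r+1$.

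To handle this, I will exploit the disjoint decomposition $\Inc(\N)^i_{n,n+1}=\Inc(\N)^{i+1}_{n,n+1}\sqcup\{\sigma_i\}$, which yields
\[
I_{n+1}:m=\Inc(\N)^{i+1}_{n,n+1}(I_n:m)+\sigma_i(I_n),
\]
since $\sigma_i(I_n)$ involves no $x_{k,i+1}$ and is therefore unaffected by colon with $m$. Adding $(x_{1,i+1},\ldots,x_{c,i+1})$ to both sides reduces the problem to establishing the inclusion $\sigma_i(I_n)\subseteq \langle \Inc(\N)^{i+1}_{n,n+1}(I_{{\bf e},n})\rangle$ for $n\ge r+1$. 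This is the main obstacle. The strategy is to use the stability of $\Icc$ at $r$ to lift each $u\in G(I_n)$ as $u=\pi(u_0)$ with $\pi\in\Inc(\N)^i_{n-1,n}$ and $u_0\in G(I_{n-1})$, and then to verify the composition identity $\sigma_i\circ\pi=\pi''\circ\sigma_i$ for a suitable $\pi''\in\Inc(\N)^{i+1}_{n,n+1}$ whose missing element is one more than that of $\pi$ (a direct case analysis on each $j\in[n-1]$). Combined with the observation that $\sigma_i(u_0)\in I_n$ carries no $x_{k,i+1}$ factor and hence belongs to $I_n:m\subseteq I_{{\bf e},n}$, one gets $\sigma_i(u)=\pi''(\sigma_i(u_0))\in\Inc(\N)^{i+1}_{n,n+1}(I_{{\bf e},n})$, completing part (i).

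Part (ii) will follow by iterating Corollary \ref{cor:repeat bdl} with the variables $x_{1,i+1},\ldots,x_{c,i+1}$ in turn. At each step the required stabilization $J:x_{k,i+1}^{d_k}=J:x_{k,i+1}^{d_k+1}$ holds since any $\pi\in\Inc(\N)^i_{n_0,n}$ with $\pi(i+1)>i+1$ strips all $x_{k,i+1}$ factors from $\pi(u)$, whereas a $\pi$ fixing $i+1$ preserves the exponent of $x_{k,i+1}$ in each $u\in G(I_{n_0})$, which is bounded by $d_k$ by construction. The elementary identity $(J,x):y^k=(J:y^k,x)$ for a monomial ideal $J$ and distinct variables $x\ne y$ then lets us collect all the colon operations into a single $I_{{\bf e},n}$, producing the stated set-inclusion for $\reg(I_n)$.

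For part (iii), the inequality $q(\Icc_{\bf e})\le q(\Icc)$ should come from a comparison of Hilbert functions. Since $\sigma_i$ induces an isomorphism of $R_r$ onto the subring $K[x_{k,j}\mid k\in[c],\; j\in[r+1]\setminus\{i+1\}]$ of $R_{r+1}$ sending $I_r$ into $I_{r+1}:m$, composition with the quotient map produces a graded surjection $R_r/I_r\twoheadrightarrow R_{r+1}/I_{{\bf e},r+1}$. This yields $\dim_K(R_{r+1}/I_{{\bf e},r+1})_j\le\dim_K(R_r/I_r)_j$ degree-by-degree, and combined with $\delta(I_{{\bf e},r+1})\le\delta(I_r)$ gives the inequality on $q$. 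Equality forces the surjection to be an isomorphism, hence $I_{{\bf e},r+1}=\sigma_i(I_r)+(x_{1,i+1},\ldots,x_{c,i+1})$; the analogous structural identity for all $n\ge r$ then propagates via the $\Inc(\N)^{i+1}$-invariance of $\Icc_{\bf e}$ established in (i). Finally, the variables $x_{1,i+1},\ldots,x_{c,i+1}$ form a regular sequence on $R_{n+1}/\sigma_i(I_n)$ (which uses none of them), so repeated application of Lemma \ref{reg mod regular element} together with the graded isomorphism induced by $\sigma_i$ identifies $\reg(R_{n+1}/\sigma_i(I_n))$ with $\reg(R_n/I_n)$ and yields $\reg(I_{{\bf e},n+1})=\reg(I_n)$.
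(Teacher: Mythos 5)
Your approach diverges from the paper's in parts (i) and (iii): the paper simply cites \cite{NR17} (Lemma 6.10 for the first statement, Theorem 6.2 and Lemma 6.11 for the third), whereas you give direct, self-contained arguments. For (i) your decomposition $\Inc(\N)^i_{n,n+1}=\Inc(\N)^{i+1}_{n,n+1}\sqcup\{\sigma_i\}$ (as maps on $[n]$), the resulting identity for $I_{n+1}:m$, and the commutation $\sigma_i\circ\pi=\pi''\circ\sigma_i$ with $\pi''$ having jump point one higher than $\pi$ are all correct and give a clean proof that $\ind^{i+1}(\Icc_{\be})\le r+1$; combined with the obvious lower bound this yields equality. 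Part (ii) matches the paper: the containment $G(I_n)\subseteq\Inc(\N)^i_{n_0,n}(G(I_{n_0}))$ caps the exponent of $x_{k,i+1}$ by $d_k$, which is exactly the stabilization needed to apply \Cref{cor:repeat bdl} iteratively.

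In part (iii), however, there is a genuine gap. You correctly build a graded surjection $\phi\colon R_r/I_r\twoheadrightarrow R_{r+1}/I_{\be,r+1}$ (since $\sigma_i(I_r)\subseteq I_{\be,r+1}$ and each $x_{k,i+1}\in I_{\be,r+1}$) and, together with $\delta(I_{\be,r+1})\le\delta(I_r)$, obtain $q(\Icc_{\be})\le q(\Icc)$. But the claim that ``equality forces the surjection to be an isomorphism'' does not stand alone: equality of $q$'s gives $\dim_K(R_{r+1}/I_{\be,r+1})_j=\dim_K(R_r/I_r)_j$ only for $j\le\delta(I_r)$, i.e.\ $\phi$ is bijective in low degrees, while in degrees $j>\delta(I_r)$ the Hilbert functions are not constrained by $q$ at all. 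To conclude $\ker\phi=0$ — equivalently $\sigma_i^{-1}(I_{\be,r+1})=I_r$ — you need the additional observation that the monomial ideal $\sigma_i^{-1}(I_{\be,r+1})$ is generated in degrees at most $\delta(I_r)$: its minimal generators are among those $u/\gcd(u,m)$ with $u\in G(I_{r+1})$ lying in the subring $\sigma_i(R_r)$, hence have degree $\le\delta(I_{r+1})\le\delta(I_r)$. Only with this degree bound does coincidence of Hilbert functions in degrees $\le\delta(I_r)$ rule out extra generators and force the ideal equality. You should make this explicit. The propagation of $I_{\be,n+1}=\langle\sigma_i(I_n),x_{1,i+1},\dots,x_{c,i+1}\rangle$ from $n=r$ to all $n\ge r$ is also only gestured at, though the same commutation mechanism does work; and the final identification $\reg(I_{\be,n+1})=\reg(I_n)$ via the regular sequence $x_{1,i+1},\dots,x_{c,i+1}$ agrees with the paper.
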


\begin{proof}
	(i) follows from \cite[Lemma 6.10]{NR17}. Let us prove (ii). As in the proof of Lemma \ref{lem_decreasing} (see \eqref{eq31}) one can show that
	\[ 
	 G(I_{n})\subseteq \Inc(\N)^i_{n_0,n}(G(I_{n_0}))\quad \text{for all }\ n\ge n_0.
	\]
        Since $x_{k,i+1}^{d_k+1}$ does not divide any monomial in $G(I_{n_0})$, the containment above implies that $x_{k,i+1}^{d_k+1}$ also does not divide any monomial in $G(I_{n})$ for all $n\ge n_0$. Hence,
	\[
	\begin{aligned}
	&\langle I_n:x_{1,i+1}^{e_1}\cdots x_{k-1,i+1}^{e_{k-1}},x_{1,i+1},\ldots,x_{k-1,i+1}\rangle:x_{k,i+1}^{d_k}\\
	&=\langle I_n:x_{1,i+1}^{e_1}\cdots x_{k-1,i+1}^{e_{k-1}},x_{1,i+1},\ldots,x_{k-1,i+1}\rangle:x_{k,i+1}^{d_k+1}
	\end{aligned}
	\]
	for all $n\ge n_0$, $1\le k\le c$ and $0\le e_1\le d_1,\dots,0\le e_{k-1}\le d_{k-1}$. Now using Corollary \ref{cor:repeat bdl}, the desired assertion follows by induction.
		
	Finally, we prove (iii). Note that the estimate $q(\Icc_{\bf e})\le q(\Icc)$ is shown in the proof of \cite[Theorem 6.2]{NR17} (page 227). To finish the proof we employ the following implications:
	\begin{align*}
	 q(\Icc_{\bf e})= q(\Icc) &\Rightarrow I_{{\bf e},r+1}=\langle\sigma_i(I_r),x_{1,i+1},\ldots,x_{c,i+1}\rangle\\
	 &\Rightarrow I_{{\bf e},n+1}=\langle\sigma_i(I_n),x_{1,i+1},\ldots,x_{c,i+1}\rangle\quad \text{for all $n\ge r$}\\
	 &\Rightarrow \reg(I_{{\bf e},n+1})= \reg(I_n) \quad \text{for all $n\ge r$}.
	\end{align*}
        The first two implications are shown in the proofs of \cite[Theorem 6.2]{NR17} and \cite[Lemma 6.11]{NR17}, respectively. It remains to check the last one. By definition of $\sigma_i$, every monomial in $G(\sigma_i(I_n))$ does not involve $x_{k,i+1}$ for all $k\in[c]$. Hence, $x_{1,i+1},\ldots,x_{c,i+1}$ form a regular sequence on $R_{n+1}/\langle\sigma_i(I_n)\rangle$. So it follows from Lemma \ref{reg mod regular element} that
        \[
         \reg(\langle\sigma_i(I_n),x_{1,i+1},\ldots,x_{c,i+1}\rangle)=\reg(\langle\sigma_i(I_n)\rangle)= \reg(I_n),
        \]
        where the last equality is obvious, since the action of $\sigma_i$ is merely a shift of variables.
\end{proof}

Next we prove a version of Corollary \ref{cor_i_bound}, stating that this result holds for more general bounds. Let us first make precise what is meant by \textquotedblleft more general bounds". 

\begin{defn}
        \label{BC}
	Let $\mathcal{F}$ denote the family of all pairs $(i,\Icc)$, where $i\ge 0$ is an integer and $\Icc=(I_n)_{n\ge 1}$ is a nonzero $\Inc(\N)^i$-invariant chain of monomial ideals. Associate to each $(i,\Icc)\in \mathcal{F}$ a real number $C^{(i)}(\Icc)\ge0$. 
	Then the family $\{C^{(i)}(\Icc)\}$ is said to be a family of {\em bounding coefficients} if, for each pair $(i,\Icc) \in \mathcal{F}$, there is an integer $n_0 \ge \ind^i (\Icc)+1$ such that the following conditions are satisfied for every  ${\bf e}\in E$:
	\begin{enumerate}[(BC1)]
		\item
		if $q(\Icc_{{\bf e}})< q(\Icc)$, then $C^{(i+1)}(\Icc_{\bf e})\le C^{(i)}(\Icc)$,
		\item
		if  $q(\Icc_{{\bf e}})= q(\Icc)$, then $|{\bf e}|\le C^{(i)}(\Icc)$,
	\end{enumerate}
	where the set $E$ and the chain $\Icc_{\bf e}=(I_{{\bf e},n})_{n\ge 1}$ are defined as in Lemma \ref{lem_linearform_manysteps}.
\end{defn}

\begin{thm}
 \label{bounding_coefficients}
	Let $\Icc=(I_n)_{n\ge 1}$ be a nonzero $\Inc(\N)^i$-invariant chain of monomial ideals. Then for any family of bounding coefficients $\{C^{(i)}(\Icc)\}$, there is a constant $D^{(i)}(\Icc)$ such that
	\[
	\reg(I_n)\le C^{(i)}(\Icc)n+D^{(i)}(\Icc)\quad \text{for }\ n\gg 0.
	\]
\end{thm}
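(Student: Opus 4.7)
The plan is to argue by induction on $q(\Icc)$, using Lemma \ref{lem_linearform_manysteps} as the main reduction tool. Its part (iii) presents exactly the dichotomy $q(\Icc_{\bf e})<q(\Icc)$ vs.\ $q(\Icc_{\bf e})=q(\Icc)$, and this matches precisely the two clauses (BC1) and (BC2) in Definition \ref{BC}. The base case $q(\Icc)=0$ is immediate: unravelling the definition, $\dim_K(R_r/I_r)_0=0$, i.e. $1\in I_r$, so $I_n=R_n$ and $\reg(I_n)=0$ for every $n\ge r$, and any constant $D^{(i)}(\Icc)$ works.

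For the inductive step, fix $(i,\Icc)\in\mathcal{F}$ with $q(\Icc)\ge 1$ and assume the theorem for every pair $(j,\Jcc)\in\mathcal{F}$ with $q(\Jcc)<q(\Icc)$. Choose $n_0\ge\ind^i(\Icc)+1$ as in Definition \ref{BC}, and form the finite set $E$ and the family of chains $\{\Icc_{\bf e}\}_{{\bf e}\in E}$ from Lemma \ref{lem_linearform_manysteps}. By part (i) of that lemma each $\Icc_{\bf e}$ is $\Inc(\N)^{i+1}$-invariant, so $(i+1,\Icc_{\bf e})\in\mathcal{F}$ and the induction hypothesis will apply to $\Icc_{\bf e}$ whenever $q(\Icc_{\bf e})<q(\Icc)$. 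For each $n\ge n_0$, part (ii) provides some ${\bf e}_n\in E$ with
\[
\reg(I_n)\;=\;\reg(I_{{\bf e}_n,n})+|{\bf e}_n|.
\]

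I now split according to $q(\Icc_{{\bf e}_n})$. If $q(\Icc_{{\bf e}_n})<q(\Icc)$, the induction hypothesis combined with (BC1) gives
\[
\reg(I_n)\;\le\;C^{(i)}(\Icc)\,n+\bigl(D^{(i+1)}(\Icc_{{\bf e}_n})+|{\bf e}_n|\bigr).
\]
If $q(\Icc_{{\bf e}_n})=q(\Icc)$, Lemma \ref{lem_linearform_manysteps}(iii) (applicable because $n\ge n_0\ge r+1$) gives $\reg(I_{{\bf e}_n,n})=\reg(I_{n-1})$, while (BC2) yields $|{\bf e}_n|\le C^{(i)}(\Icc)$, whence
\[
\reg(I_n)\;\le\;\reg(I_{n-1})+C^{(i)}(\Icc).
\]

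Setting $f(n):=\reg(I_n)-C^{(i)}(\Icc)\,n$ and
\[
M\;:=\;\max\bigl\{D^{(i+1)}(\Icc_{\bf e})+|{\bf e}|\;\big|\;{\bf e}\in E,\ q(\Icc_{\bf e})<q(\Icc)\bigr\}
\]
(with the convention $M=-\infty$ if this set is empty; $M$ is finite because $E$ is finite), the two cases read $f(n)\le M$ and $f(n)\le f(n-1)$ respectively. Descending from $n$ down to either the nearest index where Case 1 fires or to $n_0$ itself yields $f(n)\le\max\{M,f(n_0)\}$ for every $n\ge n_0$, completing the inductive step with $D^{(i)}(\Icc):=\max\{M,f(n_0)\}$. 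The main subtlety to keep in mind is that $n_0$, $E$, and the family $\{\Icc_{\bf e}\}$ must be selected once and for all at the beginning of the inductive step, so that the maximum defining $M$ ranges over a \emph{finite} collection of chains that is independent of $n$; once this uniformity is secured, the telescoping argument is routine.
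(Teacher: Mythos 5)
Your proposal follows the same route as the paper's proof: induction on $q(\Icc)$, with the base case $q(\Icc)=0$ handled trivially and the inductive step powered by Lemma~\ref{lem_linearform_manysteps}(ii)--(iii) together with Conditions (BC1) and (BC2). The telescoping reformulation with $f(n)=\reg(I_n)-C^{(i)}(\Icc)\,n$ is a clean way to package the secondary induction on $n$ that the paper carries out implicitly.

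There is one small gap, though it is expository rather than structural. When Case~1 fires at some index $m$ in your descending chain, the inequality $f(m)\le M$ relies on the induction hypothesis applied to $\Icc_{{\bf e}_m}$, and that hypothesis only supplies a linear bound for $m\ge N(\Icc_{{\bf e}_m})$ for some threshold $N(\Icc_{{\bf e}_m})$ coming out of the ``$n\gg 0$'' in the statement. You let the descent run all the way down to $n_0$, but $n_0$ comes from Definition~\ref{BC} and has no reason to dominate these thresholds. The fix is to stop the descent at $N:=\max\bigl\{n_0,\ \max\{N(\Icc_{\bf e})\mid {\bf e}\in E,\ q(\Icc_{\bf e})<q(\Icc)\}\bigr\}$ (which is finite since $E$ is finite) and set $D^{(i)}(\Icc):=\max\{M,f(N)\}$; this is precisely what the paper does by introducing $N_1$ and $N=\max\{N_1,n_0\}$. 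Your closing remark about fixing $n_0$, $E$, and $\{\Icc_{\bf e}\}$ once and for all addresses finiteness of the collection but not these thresholds, so it does not cover this point. With that one adjustment your argument is complete and essentially identical to the paper's.
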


\begin{proof}
Let us fix a family of bounding coefficients $\{C^{(i)}(\Icc)\}$. Following the idea of the proof of \cite[Theorem 6.2]{NR17}, we argue by induction on $q(\Icc)$. Set $r=\ind^i(\Icc)$. If $q(\Icc)=0$, then $I_r=R_r$, and so $I_n=R_n$ for every $n\ge r$. It follows that $\reg(I_n)=0$ for $n\ge r$, and we are done by choosing $D^{(i)}(\Icc)=0$.

Now assume $q(\Icc)\ge 1$. For each ${\bf e}\in \mathbb{Z}^c_{\ge0}$,
consider the chain $\Icc_{\bf e}=(I_{{\bf e},n})_{n\ge 1}$ as in Lemma \ref{lem_linearform_manysteps}.
Set
\[
\begin{aligned}
d_k&= \max\{e\ge 0\mid x_{k,i+1}^e ~ \text{divides some monomial in $G(I_{n_0})$}\}\ \text { for  $ k\in [c]$},\\
E&=\{{\bf e}=(e_1,\ldots,e_c)\in \mathbb{Z}^c\mid 0\le e_k\le d_k \ \text { for all  $ k\in [c]$}\},
\end{aligned}
\]
where $n_0 \ge r+1$ is a suitable integer such that Conditions (BC1) and (BC2) in Definition \ref{BC} are satisfied. According to Lemma \ref{lem_linearform_manysteps}, $\Icc_{\bf e}$ is an $\Inc(\N)^{i+1}$-invariant chain with 
\[
q(\Icc_{{\bf e}})\le q(\Icc).
\]
Note that $|{\bf e}|=e_1+\cdots+e_c\le d:=\sum_{k=1}^cd_k$ for all ${\bf e}\in E$. We write $E=E_1\cup E_2$ with
\[
E_1=\{ {\bf e}\in E\mid q(\Icc_{{\bf e}})< q(\Icc)\}\quad \text{and}\quad
E_2=\{ {\bf e}\in E\mid q(\Icc_{{\bf e}})= q(\Icc)\}.
\]

If ${\bf e}\in E_1$, then by the induction hypothesis applied to $(i+1,\Icc_{\bf e})\in\mathcal{F}$, there exist numbers $D^{(i+1)}(\Icc_{\bf e})$ and $N(\Icc_{\bf e})$ such that
\[
\reg (I_{{\bf e},n}) \le C^{(i+1)}(\Icc_{\bf e}) n +D^{(i+1)}(\Icc_{\bf e})\quad \text{for all }\ n\ge N(\Icc_{\bf e}).
\]
Since $C^{(i+1)}(\Icc_{\bf e})\le C^{(i)}(\Icc)$ by Condition (BC1), we get
\[
\reg (I_{{\bf e},n}) \le C^{(i)}(\Icc) n +D_1\quad \text{for all }\ {\bf e}\in E_1\ \text{ and }\ n\ge N_1,
\]
where $D_1=\max\{D^{(i+1)}(\Icc_{\bf e})\mid {\bf e}\in E_1\}$ and $N_1=\max\{N(\Icc_{\bf e})\mid {\bf e}\in E_1\}$.

On the other hand, if ${\bf e}\in E_2$, then Lemma \ref{lem_linearform_manysteps}(iii) gives
\[
\reg (I_{{\bf e},n})=\reg (I_{n-1}) \quad \text{for all }\ n\ge r+1.
\]
Moreover, one has
$
|{\bf e}|\le C^{(i)}(\Icc)
$
for all ${\bf e}\in E_2$ by Condition (BC2).

Now for  $n> N=\max\{N_1,n_0\}$, it follows from Lemma \ref{lem_linearform_manysteps}(ii) that
\begin{align*}
\reg (I_n) &\le \max\{\reg (I_{{\bf e},n})+|{\bf e}|\mid {\bf e}\in E\}  \\
               &= \max\big\{\max\{\reg (I_{{\bf e},n})+|{\bf e}|\mid {\bf e}\in E_1\},\ \max\{\reg (I_{{\bf e},n})+|{\bf e}|\mid {\bf e}\in E_2\}\big\}  \\
               & \le \max \{C^{(i)}(\Icc) n +D_1+d,\ \reg(I_{n-1})+C^{(i)}(\Icc)\}\\
               &\le C^{(i)}(\Icc) n +D^{(i)}(\Icc),
\end{align*}
where
$
 D^{(i)}(\Icc)=\max\{D_1+d,\ \reg (I_N)-C^{(i)}(\Icc)N\}.
$
This concludes the proof.
\end{proof}

To complete the proof of Theorem \ref{thm_bounding_reg_monomial} and Corollary \ref{cor_i_bound}, in the remaining part of this section we will show:

\begin{prop}
 \label{i-weight BC}
 Let $\mathcal{F}$ be the family of all pairs $(i,\Icc)$ as in Definition \ref{BC}. For each $(i,\Icc)\in\mathcal{F}$, set 
        \[
	C^{(i)}(\Icc)=\max\{\omega^{(i)}(\Icc)-1,0\}+\max\Big\{\sum_{k\ne l}w_k^{(i)}(\Icc)\mid l\in[c]\Big\}. 
	\] 
 Then $\{C^{(i)}(\Icc)\}$ is a family of bounding coefficients.
\end{prop}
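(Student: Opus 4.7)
The plan is to choose a uniform stabilization threshold $n_0$ that works simultaneously for the chain $\Icc$ and the finitely many auxiliary chains $\Icc_{\bf e}$ with ${\bf e}\in E$, extract a clean description of $G(I_{{\bf e},n+1})$, and then verify Conditions (BC1) and (BC2) separately. First I would fix $n_0\ge \ind^i(\Icc)+1$ large enough so that all the weights $w_k^{(i)}(I_n)$, $\omega^{(i)}(I_n)$, $w_k^{(i+1)}(I_n)$, $\omega^{(i+1)}(I_n)$, and the analogous weights for each of the finitely many chains $\Icc_{\bf e}$, are stable for $n\ge n_0$. A direct inspection of the colon and quotient operations defining $I_{{\bf e},n+1}$ then shows that $G(I_{{\bf e},n+1})$ consists of the variables $x_{k,i+1}$ together with monomials $w_u:=u/\prod_k x_{k,i+1}^{b_k(u)}$, one for each $u\in G(I_{n+1})$ with $b_k(u)\le e_k$ for all $k$, where $b_k(u)$ denotes the $x_{k,i+1}$-exponent of $u$. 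Since $w_u$ and $u$ agree at every variable other than the $x_{k,i+1}$'s, one has $w_k^{(i+1)}(w_u)=w_k^{(i+1)}(u)\le w_k^{(i)}(u)$ and $w^{(i+1)}(w_u)\le w^{(i)}(u)$. Passing to the supremum gives $w_k^{(i+1)}(\Icc_{\bf e})\le w_k^{(i)}(\Icc)$ for every $k$ and, by coordinatewise monotonicity, $\max_l\sum_{k\ne l} w_k^{(i+1)}(\Icc_{\bf e})\le \max_l\sum_{k\ne l} w_k^{(i)}(\Icc)$.

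For (BC2), assume $q(\Icc_{\bf e})=q(\Icc)$ and invoke Lemma \ref{lem_linearform_manysteps}(iii) to obtain $I_{{\bf e},n+1}=\langle\sigma_i(I_n),x_{1,i+1},\dots,x_{c,i+1}\rangle$. The case $\omega^{(i)}(\Icc)=0$ forces $w_k^{(i)}(\Icc)=0$ for all $k$, hence ${\bf e}=\mathbf{0}$ and the bound is trivial, so assume $\omega^{(i)}(\Icc)\ge 1$. I would pick $u_0\in G(I_{n+1})$ with $w^{(i)}(u_0)=\omega^{(i)}(\Icc)$, chosen in $G^{(i)}_+(I_{n+1})$ when this set is nonempty and in $G^{(i)}(I_{n+1})$ otherwise. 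If some $b_k(u_0)>e_k$, then $e_k\le \omega^{(i)}(\Icc)-1$; combining with $e_l\le w_l^{(i)}(\Icc)$ for $l\ne k$ yields $|{\bf e}|\le C^{(i)}(\Icc)$. Otherwise $v(u_0):=u_0/\prod_k x_{k,i+1}^{b_k(u_0)}$ lies in $\sigma_i(I_n)$, so some $\sigma_i(u_1)$ divides $v(u_0)$ for a $u_1\in G(I_n)$; a direct exponent computation shows $\max(u_1)<\max(u_0)$ and (when $G^{(i)}_+(I_n)\ne\emptyset$) that $u_1\in G^{(i)}_+(I_n)$ with $w^{(i)}(u_1)=\omega^{(i)}(\Icc)$. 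Iterating produces a strictly decreasing sequence of max-indices; termination must occur through the branch bounding some $e_k$, since the alternative $v(u_m)=1$ would give $1\in\sigma_i(I_n)$ and contradict the properness of $I_n$. A slightly more delicate book-keeping of the low-index part of $u_0$ handles the case $G^{(i)}_+(I_n)=\emptyset$.

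For (BC1), assume $q(\Icc_{\bf e})<q(\Icc)$. Given the $w$-bound from the first paragraph, it remains to control $\max\{\omega^{(i+1)}(\Icc_{\bf e})-1,0\}$. When $G^{(i+1)}_+(I_{{\bf e},n})$ is nonempty, its elements biject with certain $u\in G^{(i)}_+(I_n)$ satisfying $b_k(u)\le e_k$ and $v(u)\ne 1$, and choosing such a $u$ to attain $\omega^{(i)}(\Icc)$ yields $\omega^{(i+1)}(\Icc_{\bf e})\le \omega^{(i)}(\Icc)$ directly. When $G^{(i+1)}_+(I_{{\bf e},n})$ is empty, Lemma \ref{lem_compare} gives $\omega^{(i+1)}(\Icc_{\bf e})\le \max_k w_k^{(i+1)}(\Icc_{\bf e})$; one then exploits the strict inequality $q(\Icc_{\bf e})<q(\Icc)$---which produces ``lost'' generators relative to the structural identity of (BC2)---to absorb the $\omega$-excess into the slack of the $\sum_{k\ne l} w_k$ term. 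This absorption is the main obstacle: the naive bound $\omega^{(i+1)}(\Icc_{\bf e})\le \omega^{(i)}(\Icc)$ need not hold on the nose, and a careful parallel of the BC2 descent---now applied to the generators whose disappearance causes the strict drop in $q$---is required.
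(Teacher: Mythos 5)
Your framework is right---stabilize at some $n_0$, describe $G(I_{\mathbf{e},n+1})$, and check (BC1), (BC2) by comparing weights---and the first paragraph's derivation of $w_k^{(i+1)}(\Icc_{\bf e})\le w_k^{(i)}(\Icc)$ is essentially what the paper does. But there are two substantive problems, one of which is a misperceived obstacle and the other a genuine gap.

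\emph{The $\omega$-bound in (BC1) is not actually problematic.} You case on whether $G^{(i+1)}_+(\Icc_{\bf e})$ is empty and, when it is, declare that ``the naive bound $\omega^{(i+1)}(\Icc_{\bf e})\le\omega^{(i)}(\Icc)$ need not hold on the nose.'' In fact the paper proves exactly this bound unconditionally (Lemma \ref{compare weights}(i)), by casing instead on $G^{(i)}_+(I_n)$ for the \emph{original} chain. When $G^{(i)}_+(I_n)\ne\emptyset$, one does not look for a $u\in G(I_{n+1})$ with all $b_k(u)\le e_k$ that survives the colon---rather one takes the $i$-shift $\sigma_i(u)$ of a $u\in G^{(i)}_+(I_n)$ realizing $\omega^{(i)}(I_n)$. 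Since $\sigma_i(u)$ is never divisible by any $x_{k,i+1}$, it lies in $I_{{\bf e},n+1}$ regardless of ${\bf e}$, and any minimal generator $v$ dividing it has $\min(v)\ge\min(u)+1>i+1$, so $G^{(i+1)}_+(I_{{\bf e},n+1})$ is \emph{automatically nonempty} and $\omega^{(i+1)}(I_{{\bf e},n+1})\le w^{(i+1)}(v)\le w^{(i)}(\sigma_i(u))=\omega^{(i)}(I_n)$. When $G^{(i)}_+(I_n)=\emptyset$, Lemma \ref{lem_compare} gives the \emph{equality} $\omega^{(i)}(I_n)=\max_k w_k^{(i)}(I_n)$, which, combined with your $w_k$-chain of inequalities, closes the case. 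The ``absorption into the $\sum_{k\ne l}w_k$ slack'' you propose is unnecessary, unsubstantiated, and would not survive the induction anyway, since the recursion requires the clean inequality $C^{(i+1)}(\Icc_{\bf e})\le C^{(i)}(\Icc)$ on the nose.

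\emph{The descent in (BC2) is a genuine gap.} The paper introduces the weight $\widetilde\omega^{(i)}$ (a minimum over $G^{(i)}_+\cup G^{(i)}$, regardless of whether $G^{(i)}_+$ is empty) and the notion of $i$-critical and $i$-distinguished monomials precisely because your choice of $u_0$ realizing $\omega^{(i)}$ does not support the iteration. Your descent invariant requires $w^{(i)}(u_m)=\omega^{(i)}(\Icc)$ at every step, which you verify only by sandwiching: $w^{(i)}(u_{m+1})\le\omega^{(i)}$ from divisibility and $w^{(i)}(u_{m+1})\ge\omega^{(i)}$ from $u_{m+1}\in G^{(i)}_+$. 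But when $G^{(i)}_+(I_n)=\emptyset$, the lower bound fails because $\omega^{(i)}$ becomes a maximum; and in that case $u_{m+1}\in G^{(i)}(I_{n-1})$ (not $G^{(i)}_+$), so $w^{(i)}(u_{m+1})$ can drop, the $e_k<\omega^{(i)}$ conclusion at termination is no longer available, and your ``slightly more delicate book-keeping'' remark is hiding the whole problem. There is a further difficulty you do not address: the levels in your descent decrease along with the max-index, so the iteration may leave the range where the stabilized weights $\omega^{(i)}(I_n)=\omega^{(i)}(\Icc)$ are valid. The paper's $i$-critical monomials are exactly the objects that make such an argument robust---they are chosen to realize $\widetilde\omega^{(i)}$ with minimal degree, and Lemma \ref{prop:i-weight properties}(i) guarantees they persist as minimal generators of $I_n$ for all $n\ge r$, so no level-dropping occurs; and the paper then runs a \emph{direct non-containment} argument (via the smallest position $j_0$ where $x_{k,j_0}^{\widetilde\omega}$ divides an $i$-critical monomial) rather than an open-ended descent. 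To repair your proof you would effectively have to reinvent this machinery, replacing $\omega^{(i)}$ by $\widetilde\omega^{(i)}$ and the descent by the distinguished-monomial argument of Lemma \ref{compare weights}(ii).
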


The proof of this result requires further preparations. First of all, it is convenient to introduce one more weight function.

\begin{defn}
 Let $I_n\subseteq R_n$ be a nonzero monomial ideal. Set
		\[
		\widetilde{\omega}^{(i)}(I_n) =		
		\min \{  w^{(i)}(u) \mid u \in G_+^{(i)}(I_n)\cup G^{(i)}(I_n)\}.
		\]
We adopt the convention that $\widetilde{\omega}^{(i)}(I_n) =0$ in case $G_+^{(i)}(I_n)\cup G^{(i)}(I_n)=\emptyset$.
		
A monomial $u \in G(I_n)$ is called \emph{$i$-critical} if $w^{(i)}(u) = \widetilde{\omega}^{(i)}(I_n)$ and $\deg u \le \deg u'$ for every $u' \in G(I_n)$ with $w^{(i)} (u') = \widetilde{\omega}^{(i)} (I_n)$. 
\end{defn}

\begin{example}
 Consider the ideal $I_6$ in Example \ref{ex33}. We have
 \[
  \widetilde{\omega}^{(2)}(I_6) =\min \{w^{(2)}(u_2),\; w^{(2)}(u_3),\; w^{(2)}(u_4),\; w^{(2)}(u_5)\}=2.
 \]
 Note that $\widetilde{\omega}^{(2)}(I_6) =w^{(2)}(u_2)= w^{(2)}(u_3)$, but only $u_2$ is a 2-critical monomial of $I_6$ since $\deg u_2<\deg u_3$.
\end{example}

Comparing the definitions of ${\omega}^{(i)}(I_n)$ and $\widetilde{\omega}^{(i)}(I_n)$ one sees immediately that 
\begin{equation}
\label{eq_weights}
 \widetilde{\omega}^{(i)}(I_n)\le {\omega}^{(i)}(I_n). 
\end{equation}
Moreover, the weight $\widetilde{\omega}^{(i)}$ can also be extended to invariant chains.

\begin{lem}
 \label{prop:i-weight properties}
 Let $\Icc=(I_n)_{n\ge 1}$ be a nonzero $\Inc(\N)^i$-invariant chain of monomial ideals with $r= \ind^i(\Icc)$.
 Then the following statements hold:
 \begin{enumerate}
 	\item
 	Every $i$-critical monomial of $I_r$ is a minimal generator of $I_{n}$ for all $n\ge r$.
 	\item
 	One has
 	\[
 	 \widetilde{\omega}^{(i)}(I_n)=\widetilde{\omega}^{(i)}(I_r)\quad \text{for all }\ n\ge r,
 	\]
 	and this common value will be denoted by $\widetilde{\omega}^{(i)}(\Icc)$.
\end{enumerate}
\end{lem}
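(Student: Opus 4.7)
My plan is to leverage two structural facts. First, for $n \ge r$, one has $I_n = \langle \Inc(\N)^i_{r,n}(I_r)\rangle_{R_n}$, so uniqueness of minimal monomial generating sets yields $G(I_n) \subseteq \Inc(\N)^i_{r,n}(G(I_r))$; in particular each $v \in G(I_n)$ is of the form $\pi(u')$ with $u' \in G(I_r)$ and $\pi \in \Inc(\N)^i_{r,n}$. Second, any $\pi \in \Inc(\N)^i$ fixes indices $\le i$ and maps indices $> i$ to indices $> i$, so both $\deg$ and the weights $w^{(i)}_k$ (hence $w^{(i)}$) are $\Inc(\N)^i$-invariant: $\deg \pi(u') = \deg u'$ and $w^{(i)}(\pi(u')) = w^{(i)}(u')$.

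For (i), let $u \in G(I_r)$ be $i$-critical. If $\widetilde{\omega}^{(i)}(I_r) = 0$ via the convention, then $G(I_r) = G^{(i)}_-(I_r)$ is pointwise fixed by $\Inc(\N)^i$, so $G(I_n) = G(I_r)$ for all $n \ge r$ and there is nothing to prove. Otherwise $\widetilde{\omega}^{(i)}(I_r) \ge 1$ since every element of $G^{(i)}_+(I_r) \cup G^{(i)}(I_r)$ has some variable of index $> i$, forcing $u \in G^{(i)}_+(I_r) \cup G^{(i)}(I_r)$. Choose $v \in G(I_n)$ dividing $u$ and write $v = \pi(u')$. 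If $u' \in G^{(i)}_-(I_r)$ then $\pi$ fixes $u'$, so $u' = v$ divides $u$, and minimality in $I_r$ forces $u' = u$, contradicting $u \notin G^{(i)}_-(I_r)$. Hence $u' \in G^{(i)}_+(I_r) \cup G^{(i)}(I_r)$, so $w^{(i)}(u') \ge \widetilde{\omega}^{(i)}(I_r)$; combining this with $w^{(i)}(v) = w^{(i)}(u')$ and $w^{(i)}(v) \le w^{(i)}(u) = \widetilde{\omega}^{(i)}(I_r)$ (as $v \mid u$) yields $w^{(i)}(u') = \widetilde{\omega}^{(i)}(I_r)$. The $i$-critical condition on $u$ then gives $\deg u \le \deg u' = \deg v \le \deg u$, so $v = u$ and $u \in G(I_n)$.

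Part (ii) falls out as a short corollary. The upper bound $\widetilde{\omega}^{(i)}(I_n) \le \widetilde{\omega}^{(i)}(I_r)$ is obtained by evaluating at an $i$-critical generator $u$ of $I_r$, which by (i) lies in $G(I_n)$ and (since $\max(u) > i$) in $G^{(i)}_+(I_n) \cup G^{(i)}(I_n)$. For the reverse inequality, any $v \in G^{(i)}_+(I_n) \cup G^{(i)}(I_n)$ has $\max(v) > i$; lifting as $v = \pi(u')$ and noting that $\pi$ maps $G^{(i)}_-(I_r)$ to itself (and would then preserve $\max \le i$), we get $u' \in G^{(i)}_+(I_r) \cup G^{(i)}(I_r)$, hence $w^{(i)}(v) = w^{(i)}(u') \ge \widetilde{\omega}^{(i)}(I_r)$.

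The main technical inputs are the inclusion $G(I_n) \subseteq \Inc(\N)^i_{r,n}(G(I_r))$ and the $\Inc(\N)^i$-invariance of $w^{(i)}$ and $\deg$; once these are in hand, the argument reduces to a clean case analysis, and I do not foresee a serious obstacle.
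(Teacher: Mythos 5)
Your proof is correct and follows essentially the same approach as the paper's: both rest on the inclusion $G(I_n)\subseteq\Inc(\N)^i_{r,n}(G(I_r))$ together with the $\Inc(\N)^i$-invariance of $\deg$ and $w^{(i)}$, and both use the $i$-critical condition to force any generator of $I_n$ dividing $u$ to equal $u$. The only cosmetic difference is that in part (i) the paper argues by contradiction (assuming $u\notin G(I_n)$ and obtaining $\deg v<\deg u$) whereas you run the same estimates directly.
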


\begin{proof}
(i) Let $u$ be an $i$-critical monomial of $I_r$. Then
	\[
	w^{(i)}(u)=\widetilde{\omega}^{(i)}(I_r) =		
	\min \{w^{(i)}(u') \mid u' \in G_+^{(i)}(I_r)\cup G^{(i)}(I_r)\}
	\]
	and $u$ has lowest degree among all monomials of $G(I_r)$ with this property. Assume that $u\notin G(I_{n})$ for some $n\ge r$. Since $u\in I_r\subseteq I_n$, $u$ must be divisible by some $v\in G(I_{n})$ with $\deg v < \deg u$. As in the proof of Lemma \ref{lem_decreasing} one has
	\begin{equation}
	\label{eq61}
	 G(I_{n})\subseteq \Inc (\N)^i_{r, n}(G(I_r)).
	\end{equation}
	Hence there exist $u'\in G(I_r)$ and $\pi \in \Inc (\N)^i_{r, n}$ such that $v=\pi (u')$. We show that 
	\[
	u' \in G_+^{(i)}(I_r)\cup G^{(i)}(I_r). 
	\]
        Indeed, if $u' \in G_-^{(i)}(I_r)$, then $u'$ is fixed under the action of $\Inc (\N)^i$ and $u'=\pi (u')=v$ divides $u$. This contradicts the fact that $u\in G(I_r)$. Thus, $u'\in G_+^{(i)}(I_r)\cup G^{(i)}(I_r)$. We have
	 \[
	 w^{(i)}(u') = w^{(i)}(\pi (u'))= w^{(i)}(v)\le w^{(i)}(u) = \widetilde{\omega}^{(i)}(I_r).
	 \]
        By definition of $\widetilde{\omega}^{(i)}(I_r)$, this implies $w^{(i)}(u') = \widetilde{\omega}^{(i)}(I_r)$. So $\deg u'=\deg v < \deg u$ is a contradiction to the choice of $u$ as an $i$-critical monomial of $I_r$.

(ii) Let $n\ge r$. We may assume $G_+^{(i)}(I_r)\cup G^{(i)}(I_r)\ne \emptyset$, since otherwise $G(I_r)=G_-^{(i)}(I_r)$ is fixed by $\Inc (\N)^i$, implying $G(I_n)=G(I_r)$ and thus the assertion holds trivially. Let $u$ be an $i$-critical monomial of $I_r$. Then $u\in G_+^{(i)}(I_r)\cup G^{(i)}(I_r)$. According to (i), $u\in G(I_n)$. Hence, $u\in G_+^{(i)}(I_n)\cup G^{(i)}(I_n)$. This yields
\[
 \widetilde{\omega}^{(i)}(I_r)=w^{(i)}(u)\ge \widetilde{\omega}^{(i)}(I_n).
\]
On the other hand, it follows from \eqref{eq61} that
\[
 G_+^{(i)}(I_{n})\subseteq \Inc (\N)^i_{r, n}(G_+^{(i)}(I_r))\quad\text{and}\quad G^{(i)}(I_{n})\subseteq \Inc (\N)^i_{r, n}(G^{(i)}(I_r)).
\]
Now arguing as in the proof of Lemma \ref{lem_decreasing} (see \eqref{eq32}) one can show that
\[
 \{w^{(i)}(v)\mid v\in G_+^{(i)}(I_n)\cup G^{(i)}(I_{n})\}\subseteq \{w^{(i)}(v')\mid v'\in G_+^{(i)}(I_r)\cup G^{(i)}(I_{r})\}.
\]
This implies
\[
\begin{aligned}
   \widetilde{\omega}^{(i)}(I_n)&=\min\{w^{(i)}(v)\mid v\in G_+^{(i)}(I_n)\cup G^{(i)}(I_{n})\}\\
   &\ge \min\{w^{(i)}(v')\mid v'\in G_+^{(i)}(I_r)\cup G^{(i)}(I_{r})\}=\widetilde{\omega}^{(i)}(I_r),
  \end{aligned}
\]
which concludes the proof.
\end{proof}

The proof of Proposition \ref{i-weight BC} is essentially based on the following facts.

\begin{lem}
	\label{compare weights}
	Let $\Icc=(I_n)_{n\ge 1}$ be a nonzero $\Inc(\N)^i$-invariant chain of monomial ideals.
	For each ${\bf e}=(e_1,\dots,e_c)\in \mathbb{Z}^c_{\ge 0}$, consider the chain $\Icc_{\bf e}=(I_{{\bf e},n})_{n\ge 1}$ as in Lemma \ref{lem_linearform_manysteps}.
	Then the following statements hold:
	\begin{enumerate}
	\item
	For every $k\in[c]$ and every ${\bf e}\in \mathbb{Z}^c_{\ge 0}$ one has
	\[
	 w_k^{(i+1)}(\Icc_{\bf e})\le w_k^{(i)}(\Icc) \quad \text{and}\quad
	 \omega^{(i+1)}(\Icc_{\bf e})\le \omega^{(i)}(\Icc).
	\]
        \item
	If $\widetilde{\omega}^{(i)}(\Icc)> 0$ and $e_k\ge \widetilde{\omega}^{(i)}(\Icc)$ for all $k\in [c]$, then $q(\Icc_{\bf e})< q(\Icc)$.
	\end{enumerate}
\end{lem}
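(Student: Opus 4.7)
My plan is to trace how minimal generators of $I_{{\bf e},n}$ arise from those of $I_n$. Writing $a_k(u)$ for the exponent of $x_{k,i+1}$ in a monomial $u$, a standard analysis of the colon-plus-linear-forms construction shows that every $v \in G(I_{{\bf e},n})$ is either one of the $x_{k,i+1}$ or equals $u'' := u/\prod_k x_{k,i+1}^{a_k(u)}$ for some $u \in G(I_n)$ with $a_k(u) \le e_k$ for all $k$. Since stripping the variables $x_{k,i+1}$ preserves all other exponents, one gets $w_k^{(i+1)}(v) \le w_k^{(i)}(u)$ and $w^{(i+1)}(v) \le w^{(i)}(u)$; passing to the limit in $n$ then yields $w_k^{(i+1)}(\Icc_{\bf e}) \le w_k^{(i)}(\Icc)$.

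For the inequality $\omega^{(i+1)}(\Icc_{\bf e}) \le \omega^{(i)}(\Icc)$ in (i), I split on whether $G_+^{(i)}(I_n) = \emptyset$ for large $n$. In the empty case, every $u \in G(I_n)$ has $\min(u) \le i$, so $\min(u'') \le i$ as well; combined with $\min(x_{k,i+1}) = i+1$, this forces $G_+^{(i+1)}(I_{{\bf e},n}) = \emptyset$, and the bound is immediate from the max-definition of $\omega$. Otherwise, choose $u_1 \in G_+^{(i)}(I_n)$ realising $w^{(i)}(u_1) = \omega^{(i)}(\Icc)$ and apply the $i$-shift $\sigma_i \in \Inc(\N)^i_{n,n+1}$: any $u_2 \in G(I_{n+1})$ dividing $\sigma_i(u_1)$ avoids every variable of index $\le i+1$, so $u_2 = u_2''$ lies in $I_{{\bf e},n+1}$, and any minimal generator $v_2 \in G(I_{{\bf e},n+1})$ dividing $u_2$ sits in $G_+^{(i+1)}(I_{{\bf e},n+1})$ with $w^{(i+1)}(v_2) \le w^{(i+1)}(\sigma_i(u_1)) = w^{(i)}(u_1) = \omega^{(i)}(\Icc)$.

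For (ii) I will use the fact, extracted from the proof of \Cref{lem_linearform_manysteps}(iii), that $q(\Icc_{\bf e}) = q(\Icc)$ forces $I_{{\bf e},r+1} = \langle \sigma_i(I_r), x_{1,i+1},\ldots,x_{c,i+1}\rangle$, so the task reduces to producing a single monomial of $I_{{\bf e},r+1}$ outside this right-hand side. The natural candidate is $u^{*''}$, where $u^* \in G(I_r)$ is an $i$-critical monomial (which lies in $G_+^{(i)}(I_r) \cup G^{(i)}(I_r)$ because $\widetilde{\omega}^{(i)}(\Icc) > 0$); the hypothesis $e_k \ge \widetilde{\omega}^{(i)}(\Icc) \ge a_k(u^*)$ ensures $u^{*''} \in I_{{\bf e},r+1}$. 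It remains to rule out $\sigma_i(u) \mid u^{*''}$ for every $u \in G(I_r)$: the case $u \in G_-^{(i)}(I_r)$ is immediate because $\sigma_i$ fixes $u$ and $u \mid u^{*''}$ would force $u = u^*$, contradicting $u^* \notin G_-^{(i)}$; in the case $u \in G_+^{(i)}(I_r) \cup G^{(i)}(I_r)$ one combines the identity $w^{(i)}(\sigma_i(u)) = w^{(i)}(u)$ with the bound $w^{(i)}(u^{*''}) = w^{(i+1)}(u^*) \le \widetilde{\omega}^{(i)}(\Icc)$ to squeeze $w^{(i)}(u) = \widetilde{\omega}^{(i)}(\Icc)$, and then uses the minimal-degree property of $u^*$ to extract a contradiction.

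The main obstacle lies in this last deduction. When $u^*$ can be chosen with some $a_{k_0}(u^*) > 0$, stripping strictly drops the degree, and $\deg \sigma_i(u) = \deg u \ge \deg u^* > \deg u^{*''}$ rules out the divisibility directly. The delicate subcase is when every $i$-critical monomial of $I_r$ has all $a_k = 0$: then $u^{*''} = u^*$, and $\sigma_i(u) \mid u^{*''}$ instead forces $\sigma_i(u) = u^*$, so $u = \sigma_i^{-1}(u^*) \in G(I_r)$ is again $i$-critical with $\max(u) = \max(u^*) - 1$. I plan to iterate this descent; since each iterated monomial is $i$-critical and must therefore avoid $x_{k,i+1}$, its maximum index can never equal $i+1$, yet it strictly decreases at every step, forcing the iteration to terminate at the monomial $1$ and contradicting $I_r \ne R_r$.
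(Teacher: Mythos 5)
Your part (i) is correct and mirrors the paper's argument, modulo swapping the order of the two cases and using the explicit description of $G(I_{{\bf e},n})$ in terms of the stripped monomials $u''$. Your part (ii) shares the paper's overall reduction (exhibit a monomial of $I_{{\bf e},r+1}$ outside $\langle\sigma_i(I_r),x_{1,i+1},\ldots,x_{c,i+1}\rangle$) but replaces the paper's notion of \emph{$i$-distinguished} monomial and the associated index $j_0$ by the simpler invariant $\max(\cdot)$ as the descent quantity. This is a genuinely different and somewhat leaner route, and your Case~1 (some $a_{k_0}(u^*)>0$) is airtight.

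However, the final step of the delicate subcase is wrong. You write that the iteration ``terminates at the monomial $1$, contradicting $I_r\ne R_r$.'' This cannot happen: each iterated monomial is $i$-critical, hence lies in $G_+^{(i)}(I_r)\cup G^{(i)}(I_r)$ and has $\max > i$, and in your delicate subcase it also avoids every $x_{k,i+1}$, so its $\max$ is always at least $i+2$. The sequence can therefore never reach $1$ (nor indeed any $\max \le i+1$); instead the contradiction is that $\max$ decreases strictly by $1$ at each step while staying bounded below by $i+2$, which is impossible. There is also a structural subtlety in the iteration itself: from the single assumption ``$\sigma_i(u)\mid u^{*''}$'' you obtain $u_1 := \sigma_i^{-1}(u^*)\in G(I_r)$, but you have no hypothesis giving $u_1\in\langle\sigma_i(I_r),x_{1,i+1},\ldots,x_{c,i+1}\rangle$, so you cannot take the next descent step without first upgrading your contradiction assumption to the statement from Lemma~\ref{lem_linearform_manysteps}(iii), namely $I_{{\bf e},r+1}=\langle\sigma_i(I_r),x_{1,i+1},\ldots,x_{c,i+1}\rangle$ (which does apply to every $u_j$). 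The cleanest repair avoids iteration altogether: choose $u^*$ among the $i$-critical monomials with $\max(u^*)$ \emph{minimal}; then $u_1=\sigma_i^{-1}(u^*)$ being $i$-critical with $\max(u_1)=\max(u^*)-1$ contradicts minimality in one step. This is exactly the role that the paper's index $j_0$ plays; $\max$ can serve the same purpose once the extremal choice is made explicit.
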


\begin{proof} 
	 (i) Set $r= \ind^i(\Icc)$. By definition, it suffices to prove that
	 \[
	 w_k^{(i+1)}(I_{{\bf e},n})\le w_k^{(i)}(I_n) \quad \text{and}\quad
	 \omega^{(i+1)}(I_{{\bf e},n+1})\le \omega^{(i)}(I_n)\quad\text{for all }\ n\ge r+1.
	\]
	
	 For the estimate $w_k^{(i+1)}(I_{{\bf e},n})\le w_k^{(i)}(I_n)$ we only need to show that for each $u\in G(I_{{\bf e},n})$ there exists $v\in G(I_n)$ such that $w_k^{(i+1)}(u)\le w_k^{(i)}(v)$. Evidently, if $u\in G(I_{{\bf e},n})$, then either $u=x_{l,i+1}$ for some $l\in [c]$, or $u$ divides some $v\in G(I_{n})$. The former case is trivial since $w_k^{(i+1)}(x_{l,i+1})=0$. In the latter case, we have
	\[
	 w_k^{(i+1)}(u)\le w_k^{(i+1)}(v)\le w_k^{(i)}(v),
	\]
	as desired.
	
	Now we show that $\omega^{(i+1)}(I_{{\bf e},n+1})\le \omega^{(i)}(I_n)$ for all $n\ge r+1$. If $I_{{\bf e},n+1} =R_{n+1}$, then $\omega^{(i+1)}(I_{{\bf e},n+1})=\omega^{(i+1)}(R_{n+1})=0$, and we are done. Assume that $I_{{\bf e},n+1} \neq R_{n+1}$. We distinguish two cases:
	
	\emph{Case 1}: $G^{(i)}_+(I_n)\ne\emptyset$. Then $\omega^{(i)}(I_n)=\min\{w^{(i)}(u)\mid u\in G^{(i)}_+(I_n)\}$. Choose $u\in G^{(i)}_+(I_n)$ such that $w^{(i)}(u)=\omega^{(i)}(I_n)$. Let $\sigma_i$ be the $i$-shift introduced above Lemma \ref{lem_linearform_manysteps}. 
	Since $\sigma_i(u)\in I_{n+1}\subseteq I_{{\bf e},n+1}$, it is divisible by some $v\in G(I_{{\bf e},n+1})$. As $v\ne 1$, one has
	\[
	 \min(v)\ge\min(\sigma_i(u))=\min(u)+1>i+1.
	\]
	Thus $v\in G^{(i+1)}_+(I_{{\bf e},n+1})$, and the latter is non-empty. This implies 
	\[
	 \omega^{(i+1)}(I_{{\bf e},n+1})\le w^{(i+1)}(v)\le w^{(i)}(v)\le w^{(i)}(\sigma_i(u))=w^{(i)}(u)=\omega^{(i)}(I_n).
	\]
	
	\emph{Case 2}: $G^{(i)}_+(I_n)=\emptyset$. Using Lemma \ref{lem_compare} and the estimate $w_k^{(i+1)}(I_{{\bf e},n+1})\le w_k^{(i)}(I_{n+1})$ shown above we get
	\[
	 \begin{aligned}
	  \omega^{(i+1)}(I_{{\bf e},n+1})&\le \max\{w_k^{(i+1)}(I_{{\bf e},n+1})\mid k\in[c]\}\le\max\{w_k^{(i)}(I_{n+1})\mid k\in[c]\}\\
	  &\le \max\{w_k^{(i)}(I_{n})\mid k\in[c]\}=\omega^{(i)}(I_n),
	 \end{aligned}
	\]
	where the third inequality is due to Lemma \ref{lem_decreasing}.

	(ii)  Set $\widetilde{\omega}=\widetilde{\omega}^{(i)}(\Icc)=\widetilde{\omega}^{(i)}(I_r)$. According to Lemma \ref{lem_linearform_manysteps}(iii), it is enough to show that if $e_k\ge \widetilde{\omega}>0$ for all $k\in [c]$, then
	\[
	 \langle\sigma_i(I_r),x_{1,i+1},\ldots,x_{c,i+1}\rangle\subsetneq I_{{\bf e},r+1}.
	\]
	Recall that, as $\widetilde{\omega}>0$, each $i$-critical monomial of $I_r$ is divisible by $x_{k,j}^{\widetilde{\omega}}$ for some $j > i$ and $k\in [c]$.
	Let $j_0$ be the smallest index $j$ with $j > i$ such that $x_{k,j}^{\widetilde{\omega}}$ divides an $i$-critical monomial of $I_r$ for some $k\in [c]$.
	An $i$-critical monomial of $I_r$ is said to be \emph{$i$-distinguished} if it is divisible by $x_{k,j_0}^{\widetilde{\omega}}$ for some  $k\in [c]$.
	We consider two cases:
	
    \emph{Case 1}: $I_r$ has an $i$-distinguished monomial $u$ that is divisible by $x_{k,i+1}$ for some $k\in [c]$. Then we may write $u=x_{1,i+1}^{e_1'}\cdots x_{c,i+1}^{e_c'}v$ with $v$ not divisible by $x_{l,i+1}$ for any $l\in [c]$. Evidently, $e_k'\ge 1$ and $ e_l'\le w_l^{(i)}(u)\le w^{(i)}(u) = \widetilde{\omega}$ for all $l\in [c]$. So from $e_1,\dots,e_c \ge \widetilde{\omega}$ it follows that
    \[
       v = {u}/x_{1,i+1}^{e_1'}\cdots x_{c,i+1}^{e_c'} \in I_{r+1} : x_{1,i+1}^{e_1}\cdots x_{c,i+1}^{e_c}.
    \]
    Since $u$ is a minimal generator of $I_{r+1}$ by Lemma \ref{prop:i-weight properties}(i), $v \notin I_{r+1}$. Thus $v \notin \sigma_i (I_r)$, and therefore,
    \[
      \langle\sigma_i(I_r),x_{1,i+1},\ldots,x_{c,i+1}\rangle\subsetneq I_{{\bf e},r+1}.
    \]

   \emph{Case 2}: $x_{k,i+1}$ does not divide any $i$-distinguished monomial of $I_r$ for every $k\in [c]$. Then $j_0 > i+1$. Let $u \in I_r$ be any $i$-distinguished monomial. If $u\in \sigma_i (I_r)$, then $u = \sigma_i (v)$ for some $v \in I_r$. Note that $w^{(i)}(v)=w^{(i)}(\sigma_i (v))=w^{(i)}(u)$ and $\deg v=\deg u$. So $v$ is also an $i$-critical monomial of $I_r$. Since $u$ is divisible by $x_{k,j_0}^{\widetilde{\omega}}$ for some $k\in [c]$, $v$ is divisible by $x_{k,j_0-1}^{\widetilde{\omega}}$. This contradicts the definition of $j_0$. Hence
   \[
   u \in I_{{\bf e},r+1}\setminus \langle\sigma_i(I_r),x_{1,i+1},\ldots,x_{c,i+1}\rangle,
   \]
   which yields the desired claim.
\end{proof}

We are now ready to prove Proposition \ref{i-weight BC}.

\begin{proof}[Proof of Proposition \ref{i-weight BC}]
 We show that the family $\{C^{(i)}(\Icc)\}$ satisfies both Conditions (BC1) and (BC2) in Definition \ref{BC}. Let $(i,\Icc)\in\mathcal{F}$. Choose $n_0\ge \ind^i(\Icc)+1$ sufficiently large such that $\omega^{(i)}(\Icc)=\omega^{(i)}(I_{n_0})$ and $w_k^{(i)}(\Icc)=w_k^{(i)}(I_{n_0})$ for all $k\in [c]$.
 Define the set $E$ and the chain $\Icc_{\bf e}=(I_{{\bf e},n})_{n\ge 1}$ as in Lemma \ref{lem_linearform_manysteps}. So
 \[
 E=\{{\bf e}=(e_1,\ldots,e_c)\in \mathbb{Z}^c\mid 0\le e_k\le d_k\ \text{ for all } k\in [c]\},
 \]
 where
 \[
  d_k= \max\{e\ge 0\mid x_{k,i+1}^e ~ \text{divides some monomial in $G(I_{n_0})$}\}.
 \]

From Lemma \ref{compare weights}(i) it follows that
\[
 C^{(i+1)}(\Icc_{\bf e})\le C^{(i)}(\Icc)\quad\text{for all } {\bf e}\in \mathbb{Z}^c_{\ge 0}.
\]
In particular, this implies that Condition (BC1) is satisfied.

Now assume ${\bf e}\in E$ and $q(\Icc_{\bf e})= q(\Icc)$. Recall that $\widetilde{\omega}^{(i)}(\Icc)=\widetilde{\omega}^{(i)}(I_{n_0})$ by Lemma \ref{prop:i-weight properties}(ii). If $\widetilde{\omega}^{(i)}(I_{n_0})=0$, then $G(I_{n_0})=G^{(i)}_-(I_{n_0})$. It follows that $d_k=0$ for all $k\in[c]$, giving $E=\{(0,\ldots,0)\}$. Hence, Condition (BC2) holds trivially. Consider the case $\widetilde{\omega}^{(i)}(I_{n_0})>0$. Then by Lemma \ref{compare weights}(ii) and Inequality \eqref{eq_weights}, there exists some $\ k\in [c]$ such that
\[
 e_k<\widetilde{\omega}^{(i)}(I_{n_0})\le {\omega}^{(i)}(I_{n_0})={\omega}^{(i)}(\Icc).
\]
Since it is clear that $d_l\le w_l^{(i)}(I_{n_0})=w_l^{(i)}(\Icc)$ for all $l\in[c]$, we get
\[
 |{\bf e}|=e_1+\cdots+e_c\le e_k+\sum_{l\ne k}d_l\le {\omega}^{(i)}(\Icc) -1+ \sum_{l\ne k}w_l^{(i)}(\Icc)\le C^{(i)}(\Icc)
\]
for all ${\bf e}\in E$ with $q(\Icc_{\bf e})= q(\Icc)$. Thus, Condition (BC2) is also satisfied. This concludes the proof.
\end{proof}

\begin{remark}
 From the above proof of Proposition \ref{i-weight BC} one might expect that Corollary \ref{cor_i_bound} still holds if the coefficient $C^{(i)}(\Icc)$ is replaced by
 \[
  \widetilde{C}^{(i)}(\Icc)=\max\{\widetilde{\omega}^{(i)}(\Icc)-1,0\}+\max\Big\{\sum_{k\ne l}w_k^{(i)}(\Icc)\mid l\in[c]\Big\}.
 \]
 However, this may not be true in general. The reason is that the inequality
 \[
  \widetilde{\omega}^{(i)}(\Icc_{\bf e})\le \widetilde{\omega}^{(i)}(\Icc)
 \]
 can be violated for some ${\bf e}\in E$, and thus the family $\{\widetilde{C}^{(i)}(\Icc)\}$ does not necessarily satisfy Condition (BC2) in Definition \ref{BC}. For a concrete example, the reader is invited to check this with the chain $\Icc'=(I_n')_{n\ge 1}$ in Example \ref{ex31}. This observation again indicates that the bound in Theorem \ref{thm_bounding_reg_monomial} is rather sharp.
\end{remark}

Finally, we conclude this section with the proof of Theorem \ref{thm_bounding_reg_monomial}.

\begin{proof}[Proof of Theorem \ref{thm_bounding_reg_monomial}]
By Remark \ref{rem: equi} it suffices to prove Corollary \ref{cor_i_bound}. But this result follows directly from Theorem \ref{bounding_coefficients} and Proposition~\ref{i-weight BC}.
\end{proof}


\section{Extremal chains}\label{sec6}

In this section we propose a method to attack Conjecture \ref{conj}, and we establish it for chains of ideals that are extremal in a certain sense. As an application, we show that the conjecture is true for chains whose ideals are eventually Artinian.

Basically, our method relies on a careful analysis of the proofs of Lemma \ref{lem_linearform_manysteps}(ii) and Theorem \ref{bounding_coefficients}.
Let $\Icc=(I_n)_{n\ge 1}$ be an $\Inc(\N)^i$-invariant chain of monomial ideals. 
Using the notation of Lemma~\ref{lem_linearform_manysteps}, it follows from the second statement of this lemma that
\[
 \reg {(I_n)} \le\max\{\reg (I_{{\bf e},n})+|{\bf e}|\mid {\bf e}\in E\}\quad \text{ for all }\ n\gg0.
\]
We are interested in chains for which the above inequality eventually becomes an equality for some subset $E$ of $\Z^c_{\ge 0}$.

\begin{defn}
 \label{extremal}
 Let $\mathcal{F}$ be the family of all pairs $(i,\Icc)$ as in Definition \ref{BC}. We say that a subfamily $\mathcal{E}\subseteq\mathcal{F}$ is \emph{extremal} if for every $(i,\Icc)\in \mathcal{E}$, there exists a subset $E(\Icc)\subset\Z^c_{\ge 0}$ such that the following conditions are satisfied:
 \begin{enumerate}[(X1)]
  \item
  $(i+1,\Icc_{\bf e})\in\mathcal{E}$ for all $\be\in E(\Icc)$,
  \item
  $\reg {(I_n)} = \max\{\reg (I_{{\bf e},n})+|{\bf e}|\mid {\bf e}\in E(\Icc)\}$ for all $n\gg0$,
 \end{enumerate}
 where the chain $\Icc_{\bf e}=(I_{{\bf e},n})_{n\ge 1}$ is defined as in Lemma \ref{lem_linearform_manysteps}. 
\end{defn}

A chain $\Icc$ is called \emph{extremal} if the pair $(i,\Icc)$ belongs to an extremal family for some integer $i\ge 0$. As shown below, chains whose ideals are eventually Artinian are extremal. We do not know whether this is true for all chains of monomial ideals. The main result of this section is the following:

\begin{thm}
 \label{thm_extremal}
 Let $\mathcal{E}$ be an extremal subfamily of $\mathcal{F}$. Then for every $(i,\Icc)\in \mathcal{E}$ with $\Icc=(I_n)_{n\ge 1}$, there exist integer constants $\mathcal{C}(\Icc),$ $\mathcal{D}(\Icc)$ such that
\[
\reg(I_n)=\mathcal{C}(\Icc)n+\mathcal{D}(\Icc) \quad \text{whenever }\ n\gg 0.
\]
\end{thm}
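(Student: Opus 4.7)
The plan is to induct on $q(\Icc) \in \Z_{\ge 0}$. For the base case $q(\Icc) = 0$, one has $I_r = R_r$ for $r := \ind^i(\Icc)$, so $I_n = R_n$ and $\reg(I_n) = 0$ for all $n \ge r$; take $\mathcal{C}(\Icc) = \mathcal{D}(\Icc) = 0$.

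For the inductive step, assume $q(\Icc) \ge 1$ and the conclusion holds for every $(i', \Icc') \in \mathcal{E}$ with $q(\Icc') < q(\Icc)$. By condition (X2) of Definition \ref{extremal}, for $n \gg 0$
\[
\reg(I_n) = \max_{\be \in E(\Icc)}\bigl[\reg(I_{\be,n}) + |\be|\bigr].
\]
Split $E(\Icc) = E_1 \cup E_2$, where $E_1 = \{\be \in E(\Icc) \mid q(\Icc_\be) < q(\Icc)\}$ and $E_2 = \{\be \in E(\Icc) \mid q(\Icc_\be) = q(\Icc)\}$; Lemma \ref{lem_linearform_manysteps}(iii) ensures these cover $E(\Icc)$. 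For $\be \in E_1$, condition (X1) puts $(i+1, \Icc_\be) \in \mathcal{E}$, so the inductive hypothesis yields $\reg(I_{\be,n}) = \mathcal{C}(\Icc_\be) n + \mathcal{D}(\Icc_\be)$ for $n \gg 0$. For $\be \in E_2$, Lemma \ref{lem_linearform_manysteps}(iii) gives $\reg(I_{\be,n}) = \reg(I_{n-1})$ for all $n \gg 0$. Setting
\[
A(n) := \max_{\be \in E_1}\bigl[\reg(I_{\be,n}) + |\be|\bigr] \quad \text{and} \quad m := \max_{\be \in E_2}|\be|,
\]
with the convention $A \equiv -\infty$ or $m = -\infty$ when the corresponding index set is empty, I obtain the scalar recursion
\[
\reg(I_n) = \max\bigl(A(n),\ \reg(I_{n-1}) + m\bigr) \quad \text{for all } n \gg 0.
\]
Being the pointwise maximum of finitely many linear functions, $A(n)$ agrees with a single line $C^* n + D^*$ for $n \gg 0$.

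The remaining task is to solve this recursion and show that $\reg(I_n)$ is eventually linear of slope $\mathcal{C}(\Icc) := \max(C^*, m)$. Fixing $N_0$ so that both $A(n) = C^* n + D^*$ and the recursion hold for $n \ge N_0$, unrolling yields
\[
\reg(I_n) = \max\Bigl(\max_{0 \le k \le n - N_0}\bigl[C^* n + D^* + k(m - C^*)\bigr],\ \reg(I_{N_0 - 1}) + (n - N_0 + 1)m\Bigr).
\]
The inner maximum is attained at one of the endpoints $k = 0$ or $k = n - N_0$, producing lines of slope $C^*$ and $m$ respectively; together with the boundary term, every argument of the outer maximum is a line in $n$ with slope in $\{C^*, m\}$. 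A short case distinction on $\mathrm{sign}(C^* - m)$ shows that for $n \gg 0$ the dominating candidate has slope exactly $\max(C^*, m)$, so $\reg(I_n) = \mathcal{C}(\Icc) n + \mathcal{D}(\Icc)$ with an explicit intercept extracted from the comparison.

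I expect the main obstacle to be the tie case $C^* = m$: every candidate in the unrolled expression then becomes a parallel line with distinct intercepts, and one must argue that the pointwise maximum of these finitely many parallels is itself a line. The degenerate possibilities $E_1 = \emptyset$ (the recursion collapses to $\reg(I_n) = \reg(I_{n-1}) + m$, trivially linear of slope $m$) and $E_2 = \emptyset$ (the recursion reduces to $\reg(I_n) = A(n)$) are immediate. Well-foundedness of the induction is guaranteed by condition (X1) together with the strict drop of $q$ along the $E_1$-branches.
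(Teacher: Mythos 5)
Your proof is correct and follows the paper's overall plan: induction on $q(\Icc)$, the decomposition $E(\Icc) = E_1 \cup E_2$ via Lemma \ref{lem_linearform_manysteps}(iii), the inductive hypothesis applied to $(i+1, \Icc_{\be})$ for $\be \in E_1$, and the recursion $\reg(I_n) = \max(A(n), \reg(I_{n-1}) + m)$, which is exactly the paper's Equation \eqref{eq:compar}. Where you diverge is in how the recursion is resolved. The paper formulates its induction as showing the \emph{difference} $\reg(I_{n+1}) - \reg(I_n)$ is eventually the constant $\mathcal{C}(\Icc) = \max\{\max_{\be \in E_1}\mathcal{C}(\Icc_{\be}), \max_{\be \in E_2}|\be|\}$: it proves the upper bound $\reg(I_{n+1}) \le \reg(I_n) + \mathcal{C}(\Icc)$ directly, notes the lower bound $\reg(I_{n+1}) \ge \reg(I_n) + m$ directly, and closes the gap when $\mathcal{C}(\Icc) > m$ by a contradiction argument that extracts an increasing sequence $(n_j)$ of deficient steps and shows $\reg(I_{n_j+1})$ eventually falls below $\reg(I_{\be_0, n_j+1}) + |\be_0|$ for a maximizing $\be_0 \in E_1$, violating \eqref{eq:compar}. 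You instead unroll the recursion into an explicit finite max of affine functions of $n$ (noting the inner max over $k$ is attained at an endpoint) and read off that the envelope has slope $\max(C^*, m)$, with the tie case $C^* = m$ handled cleanly because all inner candidates collapse to a single line. Both are sound; your unrolling is more computational and arguably more transparent, while the paper's difference-plus-contradiction route avoids having to write down the explicit envelope. One small caveat that applies to both proofs: the arguments implicitly use that $E(\Icc)$ is finite (so maxes over $E_1$, $E_2$ are attained and the thresholds $N(\Icc_{\be})$ can be taken uniform); this holds in the intended applications (e.g., $E(\Icc) \subseteq E$ bounded coordinatewise by the $d_k$'s) but is worth flagging as a standing assumption.
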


\begin{proof}
Following the strategy of the proof of Theorem \ref{bounding_coefficients}, we show that for every pair $(i,\Icc)\in \mathcal{E}$ there is an integer $\mathcal{C}(\Icc)$ such that
\[
\reg (I_{n+1}) = \reg (I_n) + \mathcal{C}(\Icc) \quad \text{whenever }\ n\gg 0
\]
by induction on $q(\Icc)$.
Repeating the first part of the proof of Theorem \ref{bounding_coefficients}, we may assume that $q(\Icc)\ge 1$. For each ${\bf e}\in\Z^c_{\ge 0}$ define the chain $\Icc_{\bf e}$ as in Lemma~\ref{lem_linearform_manysteps}. Since the family $\mathcal{E}$ is extremal, there exists $E(\Icc)\subset\Z^c_{\ge 0}$ such that $(i+1,\Icc_{\bf e})\in\mathcal{E}$ for all $\be\in E(\Icc)$. Lemma \ref{lem_linearform_manysteps}(iii) allows us to write $E(\Icc)=E_1\cup E_2$, where
\[
E_1=\{ {\bf e}\in E(\Icc)\mid q(\Icc_{\bf e})<q(\Icc)\}\quad \text{and}\quad
E_2=\{ {\bf e}\in E(\Icc)\mid q(\Icc_{\bf e})=q(\Icc)\}.
\]

If ${\bf e}\in E_2$, then Lemma \ref{lem_linearform_manysteps}(iii) implies
\[
\reg (I_{{\bf e},n})=\reg (I_{n-1}) \quad \text{for all }\ n\ge r+1.
\]
Hence, by Condition (X2) in Definition \ref{extremal}, there is a number $N_1\ge r+1$ such that for all $n \ge N_1$ one has
\begin{align}
    \label{eq:compar}
\reg (I_n) & = \max\big\{\max\{\reg (I_{{\bf e},n})+|{\bf e}|  \mid  {\bf e}\in E_1\},\ \max\{\reg (I_{{\bf e},n})+|{\bf e}|\mid {\bf e}\in E_2\}\big\}  \nonumber  \\
               & =  \max\big\{\max\{\reg (I_{{\bf e},n})+|{\bf e}|\mid {\bf e}\in E_1\},\ \reg (I_{n-1})+ \max\{|{\bf e}|\mid {\bf e}\in E_2\}\big\}.
\end{align}

For ${\bf e} \in E_1$, the induction hypothesis applied to $(i+1,\Icc_{\bf e})\in\mathcal{E}$ yields the existence of integers $\mathcal{C}(\Icc_{\bf e})$ and $N(\Icc_{\bf e})$ such that
\begin{equation}
    \label{eq:use IH}
\reg (I_{{\bf e}, n+1}) = \reg (I_{{\bf e}, n}) + \mathcal{C}(\Icc_{\bf e}) \quad \text{ whenever } n \ge N(\Icc_{\bf e}).
\end{equation}
Set
\[
N = \max \{ N_1,\; \max \{N(\Icc_{\bf e})  \mid {\bf e} \in E_1\}\}
\]
and
\[
\mathcal{C}(\Icc) = \max \{\max \{\mathcal{C}(\Icc_{\bf e}) \mid  {\bf e} \in E_1\},\ \max \{ |{\bf e}|  \mid {\bf e} \in E_2\} \}.
\]
We will show that
\begin{equation}
 \label{eq:induction}
 \reg (I_{n+1}) = \reg (I_n) + \mathcal{C}(\Icc) \quad \text{whenever }\ n\ge N.
\end{equation}
Indeed, if $n \ge N$ and ${\bf e}  \in E_1$, then
\begin{align*}
\reg (I_{{\bf e},n+1}) + |\be| & = \reg (I_{\be, n}) + \mathcal{C}(\Icc_{\bf e}) + |\be | \\
& \le \reg (I_n) + \mathcal{C}(\Icc_{\bf e}) &&\text{by Equation~\eqref{eq:compar} } \\
& \le \reg (I_n) + \mathcal{C}(\Icc) &&\text{by definition of $\mathcal{C}(\Icc)$. }
\end{align*}
Combined with Equation~\eqref{eq:compar}, this implies
\begin{align}
    \label{eq:upper bound}
\reg (I_{n+1}) \le \reg (I_n) + \mathcal{C}(\Icc) \quad \text{ if }\ n \ge N.
\end{align}
Moreover, Equation~\eqref{eq:compar} gives
\[
\reg (I_{n+1}) \ge \reg (I_n) + \max \{ |\be |  \mid  \be \in E_2\}  \quad \text{ if }\ n \ge N.
\]
This together with Inequality~\eqref{eq:upper bound} yields Equation~\eqref{eq:induction} if $\mathcal{C}(\Icc) = \max \{ |\be |  \mid  \be \in E_2\}$.

Thus, it remains to consider the case $\mathcal{C}(\Icc) > \max \{ |\be |  \mid \be \in E_2\}$. Suppose Equation~\eqref{eq:induction} is not true. Taking into account Inequality~\eqref{eq:upper bound}, this means that, for any $n_1 \ge N$, there is some $n > n_1$ with
\[
\reg (I_{n+1}) <  \reg (I_n)  + \mathcal{C}(\Icc).
\]
We use this to define an increasing sequence $(n_j)_{j \in \N}$ of integers:  set $n_1 = N$ and, for $j \ge 2$, let
$n_j$ be the least integer $n > n_{j-1}$ with $\reg (I_{n+1}) <  \reg (I_n)  + \mathcal{C}(\Icc)$. Thus, we obtain for every $j \ge 2$,
\begin{equation}
	\label{eq:ineq}
\reg (I_{n_j+1}) \le  \reg (I_N)  + (n_j + 1 - N) \mathcal{C}(\Icc) - (j-1).
\end{equation}
Our assumption that $\mathcal{C}(\Icc) > \max \{ |\be | \mid \be \in E_2\}$ allows us to fix some element $\be_0 \in E_1$ with $\mathcal{C}(\Icc) = \mathcal{C}(\Icc_{{\bf e}_0})$. Consider some integer $j$ with $j-1 > \reg (I_N) - \reg (I_{\be_0, N}) - | \be_0 |$, or equivalently,
\[
\reg (I_N)  + (n_j + 1 - N) \mathcal{C}(\Icc) - (j-1) < \reg (I_{\be_0,N})  + (n_j + 1 - N) \mathcal{C}(\Icc)  + | \be_0 |.
\]
Combined with Inequality~\eqref{eq:ineq}, this gives
\begin{align*}
\reg (I_{n_j + 1}) & <  \reg (I_{\be_0,N})  + (n_j + 1 - N) \mathcal{C}(\Icc)  + | \be_0 |\\
& =  \reg (I_{\be_0,N})  + (n_j + 1 - N) \mathcal{C}(\Icc_{{\bf e}_0})  + | \be_0 | \\
& = \reg (I_{\be_0, n_j+1}) + | \be_0 |  \quad \text{ by Equation~\eqref{eq:use IH}.}
\end{align*}
However, this contradicts Equation~\eqref{eq:compar}. Thus, the argument is complete.
\end{proof}

Based on the previous result we will verify Conjecture \ref{conj} for chains whose ideals are eventually Artinian.
By abuse of notation, we call an ideal $J$ of a Noetherian polynomial ring $S$ {\em Artinian} if $S/J$ is an Artinian ring. We need a more precise version of Corollary~\ref{cor:repeat bdl}.

\begin{lem}
     \label{lem:bdl for Artinian}
Let $x$ be a variable of a Noetherian polynomial ring $S$ over $K$. If $J \subseteq S$ is an Artinian monomial ideal and $d\ge 0$ is the smallest integer such that $J:x^d=J:x^{d+1}$, then
\[
\reg (J) = \max \{\reg (J : x^k, x) + k   \mid  0 \le k \le d\}.
\]
\end{lem}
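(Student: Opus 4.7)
The plan is to upgrade Corollary \ref{cor:repeat bdl}, which already places $\reg(J)$ inside the finite set $\{\reg(J:x^k, x) + k \mid 0 \le k \le d\}$, by showing that every element of this set is also a lower bound for $\reg(J)$. The Artinian hypothesis enters through the standard fact that every graded Artinian $S$-module $M$ satisfies $\reg(M) = \max\{j \mid M_j \neq 0\}$ (since $H^i_{\mathfrak{m}}(M) = 0$ for $i > 0$ while $H^0_{\mathfrak{m}}(M) = M$). In particular $\reg(J) = \reg(S/J) + 1$ equals one plus the top nonzero degree of $S/J$, and the same characterization applies to each Artinian quotient $S/(J:x^k, x)$.

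For each $k$ with $0 \le k < d$, I would set $D_k = \reg(S/(J:x^k, x))$ and choose a monomial $f$ of degree $D_k$ lying outside the monomial ideal $(J:x^k, x)$. Since $(J:x^k, x) \supseteq J:x^k$, we have $f \notin J:x^k$, so $x^k f$ is a monomial of degree $D_k + k$ not belonging to $J$. Hence $\reg(S/J) \ge D_k + k$, which rearranges to $\reg(J) \ge \reg(J:x^k, x) + k$. For the boundary case $k = d$, the Artinian hypothesis forces $J:x^d = S$: the equality $J:x^d = J:x^{d+1}$ says that $x$ is a non-zero-divisor on $S/(J:x^d)$, which is Artinian as a quotient of $S/J$, so it must vanish. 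Thus $\reg(J:x^d, x) = \reg(S) = 0$, and the minimality of $d$ gives $x^{d-1} \notin J$, producing a nonzero element of degree $d-1$ in $S/J$ and hence $\reg(J) \ge d$.

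The main subtlety is the closing Artinian argument that converts the stabilization $J:x^d = J:x^{d+1}$ into $J:x^d = S$; once this is available, everything else is essentially formal, since the witness monomial $x^k f$ is transparent and the regularity-equals-top-degree characterization for Artinian modules is standard. The monomial hypothesis on $J$ is used only to ensure that $(J:x^k, x)$ is again a monomial ideal admitting monomial representatives in each degree.
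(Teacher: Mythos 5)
Your proof is correct, but it takes a genuinely different route from the paper's. The paper strengthens Lemma~\ref{cor:bdl} under the Artinian hypothesis into an \emph{exact} one-step recursion $\reg(I)=\max\{1+\reg(I:x),\,\reg(I,x)\}$, obtained from the short exact sequence $0\to (S/(I:x))(-1)\to S/I\to S/(I,x)\to 0$ together with the top-degree characterization of regularity for Artinian ideals, and then unrolls this recursion $d$ times. You instead keep the one-sided containment of Corollary~\ref{cor:repeat bdl} (which already pins $\reg(J)$ to be \emph{one of} the values $\reg(J:x^k,x)+k$) and supply the missing lower bounds by exhibiting explicit monomial witnesses: a top-degree monomial $f\notin(J:x^k,x)$ gives $x^k f\notin J$ of degree $\reg(S/(J:x^k,x))+k$, so $\reg(J)\ge\reg(J:x^k,x)+k$. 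Both arguments ultimately rest on the fact that $\reg$ of an Artinian quotient equals its top nonzero degree, but your witness construction is more elementary and bypasses the exact sequence entirely for the lower bound; your observation that the stabilization $J:x^d=J:x^{d+1}$ together with Artinianness forces $J:x^d=S$ is also a nice point the paper leaves implicit. One small presentational gap: your argument for $k=d$ invokes $x^{d-1}$, which is meaningless when $d=0$; you should note that $d=0$ forces $J=S$ (since a positive-degree non-zero-divisor on the Artinian module $S/J$ forces $S/J=0$), in which case both sides of the claimed identity are $0$ and there is nothing to prove.
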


\begin{proof}
If $d=0$, then $x$ is a non-zero-divisor on $S/J$. So
$\reg (J) = \reg (J, x)$ by Lemma \ref{reg mod regular element}, and we are done.
Now assume $d>0$. In this case, $J$ is a proper ideal in $S$.
Note that for any graded Artinian ideal $I \subsetneq S$ one has
\[
\reg (I) = 1 + \reg (S/I)
 = 1 + \max \{ j \in \Z  \mid  [S/I]_j \neq 0\}
\]
(see \cite[Theorem 18.4]{Pe11}). Hence, if $x \notin I$, then the exact sequence
\[
0 \to (S/(I : x))(-1) \to S/I \to S/(I, x) \to 0
\]
yields
\[
\reg (I) = \max \{1 + \reg (I : x),\ \reg (I, x) \}.
\]
Obviously, this formula also holds when $x\in I$. Now applying the formula repeatedly as in the proof of Corollary \ref{cor:repeat bdl}, the claim follows.
\end{proof}

In order to apply this result to chains of Artinian ideals, we have to refine the numbers $d_k$ and the set $E$ introduced in Lemma \ref{lem_linearform_manysteps}. Let $\Icc=(I_n)_{n\ge 1}$ be a nonzero $\Inc(\N)^i$-invariant chain of monomial ideals with $r=\ind^i(\Icc)$. Choose $n_0\ge r+1$ large enough such that $w_k^{(i)}(\Icc)=w_k^{(i)}(I_{n_0})$ for all $k\in [c]$, and for such $n_0$ define the numbers $d_k$ and the set $E$ as in Lemma \ref{lem_linearform_manysteps}. Thus, 
\[
\begin{aligned}
 d_k&= \max\{e\ge 0\mid x_{k,i+1}^e ~ \text{divides some monomial in $G(I_{n_0})$}\}\quad \text{for } k\in [c],\\
 E&=\{{\bf e}=(e_1,\dots,e_c)\in\Z^c\mid 0\le e_k\le d_k \ \text{ for all }\ k\in [c]\}.
\end{aligned}
\]

Since $w_1^{(i)}(\Icc)=w_1^{(i)}(I_{n_0})$, it is evident that $d_1$ is the largest non-negative integer such that $x_{1,i+1}^{d_1}$ divides some monomial in $G(I_n)$ for all $n\ge n_0$. Equivalently, $d_1\ge 0$ is the smallest integer such that
\[
 I_n:x_{1,i+1}^{d_1}=I_n:x_{1,i+1}^{d_1+1}\quad\text{for all } n\ge n_0.
\]

Now for each $0\le e_1\le d_1$ and $n\ge n_0$, let $d_2(n;e_1)\ge 0$ be the smallest integer such that
\[
 \langle I_n:x_{1,i+1}^{e_1},x_{1,i+1}\rangle :x_{2,i+1}^{d_2(n;e_1)}=\langle I_n:x_{1,i+1}^{e_1},x_{1,i+1}\rangle :x_{2,i+1}^{d_2(n;e_1)+1}.
\]
Evidently, $d_2(n;e_1)\le d_2$. Moreover, since $G(I_{n+1})\subseteq\Inc(\N)^i_{n,n+1}(G(I_{n}))$, we see that $d_2(n;e_1)\ge d_2(n+1;e_1)$. Thus, there exists $n_1\ge n_0$ such that $d_2(n;e_1)=d_2(n_1;e_1)$ for all $n\ge n_1$. We set $d_2(e_1)=d_2(n_1;e_1)$. 

More generally, for each $k$-tuple $(e_1,\dots,e_k)$ with $1\le k<c$, $0\le e_1\le d_1,\dots,$ $0\le e_k\le d_k(e_1,\dots,e_{k-1})$, and each $n\ge n_{k-1}$ (as above, $n_{k-1}$ is some number such that
\[
 d_k(n;e_1,\dots,e_{k-1})=d_k(e_1,\dots,e_{k-1})
\]
for all $n\ge n_{k-1}$), let $d_{k+1}(n;e_1,\dots,e_{k})\ge0$ be the smallest integer such that
\[\begin{aligned}
 \langle &I_n:x_{1,i+1}^{e_1}\cdots x_{k,i+1}^{e_k},x_{1,i+1},\dots,x_{k,i+1}\rangle :x_{k+1,i+1}^{d_{k+1}(n;e_1,\dots,e_{k})}\\
 &=\langle I_n:x_{1,i+1}^{e_1}\cdots x_{k,i+1}^{e_k},x_{1,i+1},\dots,x_{k,i+1}\rangle :x_{k+1,i+1}^{d_{k+1}(n;e_1,\dots,e_{k})+1}.
 \end{aligned}
\]
As before, $d_{k+1}(n;e_1,\dots,e_{k})$ is independent of $n$ when $n\gg0$, and for such $n$ we set
\[
d_{k+1}(e_1,\dots,e_{k})=d_{k+1}(n;e_1,\dots,e_{k}).
\]

Now define
\[
 E(\Icc)=\{{\bf e}=(e_1,\dots,e_c)\in\Z^c\mid 0\le e_k\le d_k(e_1,\dots,e_{k-1}) \ \text{ for all }\ k\in [c]\},
\]
where $d_1(\emptyset)=d_1$. Note that $d_k(e_1,\dots,e_{k-1})\le d_k$ for all $k\in [c]$.
Hence, $E(\Icc)\subseteq E$.

We have the following version of Lemma \ref{lem_linearform_manysteps}(ii) for chains of Artinian ideals:

\begin{cor}
      \label{cor:repeared artinian bdl}
 Let $\Icc=(I_n)_{n\ge 1}$ be an $\Inc(\N)^i$-invariant chain of monomial ideals with $r=\ind^i(\Icc)$. Assume $I_r\subseteq R_r$ is an Artinian ideal. Define the chain $\Icc_{\bf e}=(I_{{\bf e},n})_{n\ge 1}$ as in Lemma \ref{lem_linearform_manysteps}. Then with the set $E(\Icc)$ introduced above, we have
	\[
	\reg {(I_n)} =\max \{\reg (I_{{\bf e},n})+|{\bf e}|\mid {\bf e}\in E(\Icc)\}\quad \text{ for }\ n\gg 0.
	\]
\end{cor}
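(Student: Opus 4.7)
The plan is to prove the equality by an iterated application of Lemma~\ref{lem:bdl for Artinian}, which upgrades the containment in Lemma~\ref{lem_linearform_manysteps}(ii) to an equality provided the ideals involved are Artinian. The first preparatory step is to check that Artinianity propagates along the chain: if $I_r$ is Artinian, then so is $I_n$ for every $n \ge r$. This follows from $I_n = \langle \Inc(\N)^i_{r,n}(I_r)\rangle_{R_n}$ together with the observation that, given $k \in [c]$ and $j \in [n]$, one can produce a $\pi \in \Inc(\N)^i_{r,n}$ mapping some $j' \in [r]$ with $x_{k,j'}^{e_{k,j'}} \in I_r$ to $j$ (for instance $\pi = \mathrm{id}$ when $j \le r$, and the identity on $[r-1]$ extended by $\pi(r) = j$ and shifted thereafter when $j \ge r$), forcing a pure power of $x_{k,j}$ into $I_n$.

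Next, I would iterate Lemma~\ref{lem:bdl for Artinian} along the variables $x_{1,i+1}, x_{2,i+1}, \ldots, x_{c,i+1}$. For $0 \le k \le c$, set
\[
J^{(k)}_n(e_1, \ldots, e_k) \;=\; \langle I_n : x_{1,i+1}^{e_1}\cdots x_{k,i+1}^{e_k},\ x_{1,i+1}, \ldots, x_{k,i+1}\rangle,
\]
so that $J^{(0)}_n = I_n$ and $J^{(c)}_n(\be) = I_{\be, n}$. Each $J^{(k)}_n$ contains the Artinian ideal $I_n$ and is therefore itself Artinian. Applying Lemma~\ref{lem:bdl for Artinian} with $x = x_{k+1,i+1}$ to $J^{(k)}_n$ gives
\[
\reg\!\bigl(J^{(k)}_n(e_1,\ldots,e_k)\bigr) \;=\; \max\bigl\{\reg\!\bigl(J^{(k+1)}_n(e_1,\ldots,e_{k+1})\bigr) + e_{k+1} \bigm| 0 \le e_{k+1} \le d_{k+1}(n;e_1,\ldots,e_k)\bigr\},
\]
because $d_{k+1}(n;e_1,\ldots,e_k)$ is, by construction, precisely the smallest colon-stabilization exponent required by that lemma. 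Telescoping these equalities for $k = 0,1,\ldots,c-1$ yields
\[
\reg(I_n) \;=\; \max\bigl\{\reg(I_{\be, n}) + |\be| \bigm| 0 \le e_k \le d_k(n;e_1,\ldots,e_{k-1})\ \text{for all}\ k \in [c]\bigr\}.
\]

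Finally, as already observed in the setup preceding the corollary, each sequence $\bigl(d_k(n;e_1,\ldots,e_{k-1})\bigr)_n$ is non-increasing in $n$ and bounded above by $d_k$, hence stabilizes at $d_k(e_1,\ldots,e_{k-1})$. Only finitely many tuples $(e_1,\ldots,e_{k-1})$ can ever enter the formula (they all lie in a box determined by the $d_j$'s), so for $n$ larger than the maximum of their individual stabilization indices, the indexing set on the right-hand side coincides exactly with $E(\Icc)$, which concludes the argument. The main subtlety is precisely this last step of simultaneous stabilization---one must arrange the iteration so that a single threshold $n_0$ works uniformly across every branch of the nested max---but since only finitely many tuples are ever exercised, this reduces to taking the maximum over a finite family of stabilization thresholds and poses no genuine obstruction.
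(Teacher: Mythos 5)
Your proposal is correct and follows essentially the same path as the paper: establish that Artinianity propagates from $I_r$ to every $I_n$ with $n \ge r$, observe that each $d_{k+1}(n;e_1,\dots,e_k)$ is exactly the colon-stabilization exponent that Lemma~\ref{lem:bdl for Artinian} requires, apply that lemma iteratively across the $c$ variables $x_{1,i+1},\dots,x_{c,i+1}$ (what you phrase as telescoping, the paper phrases as induction on $c$), and finally use the stabilization of the nested thresholds over the finite box $\prod_k[0,d_k]$ to replace the $n$-dependent index set by $E(\Icc)$ once $n$ is large. The only point worth noting is that your explicit argument for the propagation of Artinianity (choosing $\pi\in\Inc(\N)^i_{r,n}$ with $\pi(r)=j$) tacitly uses $r>i$; this assumption is equally implicit in the paper's one-line version of the same claim, so it is not a defect of your write-up specifically.
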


\begin{proof}
	Since $I_r$ is an Artinian ideal in $R_r$, the ideal $I_n$ is Artinian in $R_n$ for every $n \ge r$. By the above construction, for each ${\bf e}=(e_1,\dots,e_c)\in E(\Icc)$, each $k\in[c]$ and all $n\gg 0$, $d_k(e_1,\dots,e_{k-1})$ is the smallest non-negative integer such that
	\[\begin{aligned}
	\langle &I_n:x_{1,i+1}^{e_1}\cdots x_{k-1,i+1}^{e_k},x_{1,i+1},\dots,x_{k-1,i+1}\rangle :x_{k,i+1}^{d_{k}(e_1,\dots,e_{k-1})}\\
	&=\langle I_n:x_{1,i+1}^{e_1}\cdots x_{k-1,i+1}^{e_k},x_{1,i+1},\dots,x_{k-1,i+1}\rangle :x_{k,i+1}^{d_{k}(e_1,\dots,e_{k-1})+1}.
	\end{aligned}
	\]
	Using this equation together with Lemma \ref{lem:bdl for Artinian}, the claim follows by induction on $c$.
\end{proof}

Now we are ready to prove Conjecture \ref{conj} for chains of graded ideals which are eventually Artinian. Note that we do not require such ideals to be monomial ideals.

\begin{cor}
   \label{thm: artinian chains}
Let $\Icc = (I_n)_{n\ge 1}$ be an $\Inc(\N)^i$-invariant chain of graded ideals with $r = \ind^i (\Icc)$.
Suppose that $I_r$ is an Artinian ideal in $R_r$.
Then $\reg(I_n)$ is eventually a linear function in $n$, that is, there are integer constants $C,$ $D$ such that
\[
\reg(I_n)=Cn+D \quad \text{ whenever }\ n\gg 0.
\]
\end{cor}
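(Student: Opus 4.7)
The plan is to reduce to the case of a chain of Artinian monomial ideals via taking initial ideals, and then to deduce the result from Theorem~\ref{thm_extremal} by exhibiting a suitable extremal family.

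First I would pick a monomial order $\le$ on $R$ that respects $\Inc(\N)^i$ (the lexicographic order described in Section~\ref{sec2}). By Lemma~\ref{lem_initial_filtration} the chain $\ini_{\le}(\Icc)=(\ini_{\le}(I_n))_{n\ge 1}$ is again $\Inc(\N)^i$-invariant. Next, I would check that $I_n$ is Artinian in $R_n$ for every $n\ge r$, and hence so is $\ini_{\le}(I_n)$. Since $I_r$ is Artinian, for each $k\in [c]$ and each $j\in[r]$ a power $x_{k,j}^{a_{k,j}}$ lies in $I_r$. For $l\le i$ this power is fixed under $\Inc(\N)^i$ and remains in $I_n$. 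For $l>i$ and any $n\ge r$, a direct construction produces $j$ with $i<j\le r$ and $\pi\in \Inc(\N)^i_{r,n}$ with $\pi(j)=l$, so that $x_{k,l}^{a_{k,j}}=\pi(x_{k,j}^{a_{k,j}})\in I_n$; thus $\sqrt{I_n}$ contains all the variables, i.e.\ $I_n$ is Artinian.

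Now the key reduction: because $I_n$ and $\ini_{\le}(I_n)$ have identical Hilbert functions and both are Artinian, the formula $\reg(I)=1+\max\{j\mid [R_n/I]_j\ne 0\}$ cited before Lemma~\ref{lem:bdl for Artinian} gives
\[
\reg(I_n)=\reg(\ini_{\le}(I_n))\qquad\text{for all }n\ge r.
\]
Thus it suffices to establish the linearity assertion for the monomial chain $\ini_{\le}(\Icc)$. For this I would let $\mathcal{E}\subseteq\mathcal{F}$ be the subfamily consisting of all pairs $(j,\Jcc)$ where $\Jcc=(J_n)_{n\ge 1}$ is a chain of monomial ideals such that $J_n$ is Artinian in $R_n$ for all $n\gg 0$, equipped with the set $E(\Jcc)$ introduced just before Corollary~\ref{cor:repeared artinian bdl}. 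I would show $\mathcal{E}$ is extremal: condition (X2) is precisely Corollary~\ref{cor:repeared artinian bdl}, while for (X1) one observes that $\Jcc_{\bf e}$ is $\Inc(\N)^{j+1}$-invariant by Lemma~\ref{lem_linearform_manysteps}(i) and that the operation $J_n\mapsto J_{{\bf e},n}=\langle J_n:x_{1,j+1}^{e_1}\cdots x_{c,j+1}^{e_c},\,x_{1,j+1},\ldots,x_{c,j+1}\rangle$ preserves Artinianness (a colon with a monomial keeps the quotient a submodule of a finite-dimensional quotient, and adjoining linear forms only shrinks the quotient).

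With $\mathcal{E}$ identified as an extremal family containing $(i,\ini_{\le}(\Icc))$, an appeal to Theorem~\ref{thm_extremal} produces integers $C,D$ with $\reg(\ini_{\le}(I_n))=Cn+D$ for $n\gg 0$, and by the identity $\reg(I_n)=\reg(\ini_{\le}(I_n))$ the same linear formula holds for $\reg(I_n)$, as required. The step I expect to be most delicate is verifying that the Artinianness really is inherited uniformly along the whole invariant chain (the argument in the second paragraph), because it must survive both the extension from $R_r$ to $R_n$ under the $\Inc(\N)^i$-action and the subsequent passage to initial ideals; once this is secured, Theorem~\ref{thm_extremal} and Corollary~\ref{cor:repeared artinian bdl} do the rest of the work.
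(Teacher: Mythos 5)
Your proposal is correct and follows essentially the same route as the paper: pass to initial ideals using that an Artinian ideal's regularity is read off from its Hilbert function, then apply Theorem~\ref{thm_extremal} to the extremal subfamily of eventually Artinian monomial chains, with (X2) supplied by Corollary~\ref{cor:repeared artinian bdl} and (X1) by the containment $J_n\subseteq J_{{\bf e},n}$. The only differences are cosmetic — you spell out why Artinianness propagates along the chain (the paper asserts it), and your justification of (X1) is slightly more elaborate than the paper's one-line observation that $I_n\subseteq I_{{\bf e},n}$ forces $I_{{\bf e},n}$ to be Artinian.
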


\begin{proof}
By assumption, $I_n$ is an Artinian ideal in $R_n$ for every $n \ge r$. Let $\le$ be a monomial order on $R$ respecting $\Inc(\N)^i$. It is well-known that  the $K$-algebras $R_n/I_n$ and $R_n/\ini_{\le}(I_n)$ have the same Hilbert function. Since the regularity of an Artinian algebra is determined by its Hilbert function (see \cite[Theorem 18.4]{Pe11}), we conclude that
\[
\reg (I_n) = \reg (\ini_{\le} (I_n)) \quad \text{ whenever }\ n \ge r.
\]
So invoking Lemma \ref{lem_initial_filtration}, we may assume that $\Icc$ is a chain of monomial ideals.

With the family $\mathcal{F}$ as in Definition \ref{BC}, let $\mathcal{E}$ be the subfamily of $\mathcal{F}$ consisting of all pairs $(i',\Icc')$ such that the ideals $I_n'$ in the chain $\Icc'$ are eventually Artinian. For each such chain $\Icc'$ and each ${\bf e}\in \Z^c_{\ge0}$, define the chain $\Icc_{\bf e}'=(I_{{\bf e},n}')_{n\ge 1}$ as in Lemma \ref{lem_linearform_manysteps}. Then $I_{{\bf e},n}'$ is eventually Artinian, since $I_n'\subseteq I_{{\bf e},n}'$. It follows that $(i'+1,\Icc_{\bf e}')\in\mathcal{E}$. Together with Corollary \ref{cor:repeared artinian bdl} this implies that $\mathcal{E}$ is an extremal family. We now conclude the proof by Theorem \ref{thm_extremal}.
\end{proof}

The constant $C$ in Corollary \ref{thm: artinian chains} is certainly not always zero.

\begin{example}
 \label{artinian}
Let $I= \langle\Inc(\N) \cdot \{x_{1,1}^{a_1},\ldots,x_{c,1}^{a_c} \}\rangle _R$ be an $\Inc(\N)$-invariant ideal in $R$,
where  $a_1,\dots,a_c \in \N$. Set $I_n = I \cap R_n=\langle x_{1,1}^{a_1},\ldots,x_{c,1}^{a_c},\dots, x_{1,n}^{a_1},\ldots,x_{c,n}^{a_c}\rangle$ and $|a| = a_1 + \cdots + a_c$. Then using Lemma \ref{reg mod regular element} it is easy to see that 
\[
\reg (I_n) =  (|a| - c) n + 1\quad \text{for every }\ n \ge 1.
\]
\end{example}


\section{Existence of generic initial chains}\label{sec7}

The existence of generic initial ideals in a Noetherian polynomial ring reduces the study of Hilbert series and many other interesting invariants of an arbitrary graded ideal to the case of a Borel-fixed ideal (or a strongly stable ideal if the base field has characteristic 0). This technique is very powerful and has many interesting applications; see, e.g., \cite{Gr}. Given the theory of equivariant Hilbert series of $\Inc(\N)^i$-invariant chains developed in \cite{NR17}, it is natural to ask whether there is a \emph{good} notion of generic initial chains in this context.

Let $\Icc=(I_n)_{n\ge 1}$ be an $\Inc(\N)^i$-invariant chain of graded ideals. Then a generic initial chain $\Jcc$ of $\Icc$ should have the following properties:

\begin{enumerate}
 \item
 $\Jcc=(J_n)_{n\ge 1}$ is an $\Inc(\N)^i$-invariant chain of monomial ideals,
 \item
 $\Icc$ and $\Jcc$ have the same equivariant Hilbert series, i.e., $I_n$ and $J_n$ have the same Hilbert series as ideals in $R_n$ for every $n\ge 1$,
 \item
 $J_n$ is a Borel-fixed ideal in $R_n$ for every $n\ge 1$.
\end{enumerate}

Given a monomial order $\le$ on $R$, the most natural and reasonable candidate for such a generic initial chain would be the chain $(\gin_{\le}(I_n))_{n\ge 1}$, where $\gin_{\le}(I_n)$ denotes the generic initial ideal of $I_n$ with respect to $\le$ in $R_n$. This chain clearly satisfies Conditions (ii) and (iii) above. Unfortunately, as the next result shows, it does not satisfy (i) in general.

\begin{prop}
 \label{generic}
 Assume the base field $K$ has characteristic $0$. Let $\Icc=(I_n)_{n\ge 1}$ be an $\Inc(\N)^i$-invariant chain of graded ideals. If the chain $(\gin_{\le}(I_n))_{n\ge 1}$ is  $\Inc(\N)^i$-invariant, then the sequence $(\reg(I_n))_{n\ge 1}$ is bounded.
\end{prop}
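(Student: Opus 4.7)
The plan is to exploit the rigidity of Borel-fixed ideals in characteristic zero together with the stabilization of invariant chains. By hypothesis the chain $(\gin_{\le}(I_n))_{n\ge 1}$ is $\Inc(\N)^i$-invariant, so by the $\Inc(\N)^i$-Noetherianity of $R$ (Cohen's theorem, recalled in Section \ref{sec2}), this chain stabilizes. Thus there exists an integer $r \ge 1$ such that for every $n \ge r$,
\[
\gin_{\le}(I_n) = \langle \Inc(\N)^i_{r,n}(\gin_{\le}(I_r)) \rangle_{R_n}.
\]
As in the proof of \Cref{lem_decreasing}, every minimal generator of $\gin_{\le}(I_n)$ lies in $\Inc(\N)^i_{r,n}(G(\gin_{\le}(I_r)))$. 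Since every $\pi\in\Inc(\N)^i$ acts simply by renaming second indices of variables, the action preserves total degree. Consequently, writing $\delta(J)$ for the maximal degree of a minimal generator of a monomial ideal $J$, we obtain
\[
\delta(\gin_{\le}(I_n)) \le \delta(\gin_{\le}(I_r)) \quad \text{for all } n\ge r,
\]
so the degrees of the minimal generators of $\gin_{\le}(I_n)$ are bounded by a constant independent of $n$.

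Since $\ch K = 0$, the generic initial ideal $\gin_{\le}(I_n)$ is Borel-fixed (equivalently, strongly stable) in $R_n$ for every $n$. By the Eliahou-Kervaire formula, the regularity of any Borel-fixed monomial ideal equals the maximal degree of a minimal generator; hence
\[
\reg(\gin_{\le}(I_n)) = \delta(\gin_{\le}(I_n)) \le \delta(\gin_{\le}(I_r))
\]
for all $n \ge r$. Combined with the standard inequality $\reg(I_n) \le \reg(\gin_{\le}(I_n))$ (obtained from $\reg(I_n) \le \reg(\ini_{\le}(I_n))$ after a generic linear change of coordinates, using that regularity is invariant under the $GL_n$-action), this yields
\[
\reg(I_n) \le \delta(\gin_{\le}(I_r)) \quad \text{for all } n\ge r,
\]
proving the boundedness claim.

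The argument is essentially a one-line observation once the right pieces are in place, so I do not anticipate a genuine obstacle; the only points requiring care are the two ingredients just invoked (Borel-fixedness of $\gin$ in characteristic zero and the Eliahou-Kervaire regularity formula for Borel-fixed ideals), both of which are standard and can be cited from, e.g., Eisenbud's \emph{Geometry of Syzygies}. The conceptual content is that $\Inc(\N)^i$-invariance of the $\gin$-chain forces the minimal generators of $\gin_{\le}(I_n)$ to be obtained by shifting those of $\gin_{\le}(I_r)$, hence to have bounded degree, which in the Borel-fixed world controls the entire regularity.
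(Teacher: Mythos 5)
Your proof is correct and takes essentially the same route as the paper's: the paper cites Green's \emph{Generic initial ideals} (Corollary~2.12) for the bound $\reg(I_n)\le\delta(\gin_{\le}(I_n))$ and then observes that $\Inc(\N)^i$-invariance forces $(\delta(\gin_{\le}(I_n)))_{n\ge 1}$ to be eventually constant. You derive the cited ingredient from scratch (Borel-fixedness of $\gin$ in characteristic zero, the Eliahou--Kervaire description of regularity of strongly stable ideals, and the inequality $\reg(I_n)\le\reg(\gin_{\le}(I_n))$), and you flesh out the boundedness of the degree sequence via the stabilization of the invariant chain exactly as in the paper's Lemma~\ref{lem_decreasing}; these are honest expansions, not a different argument.
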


\begin{proof}
 It is well-known that $\reg(I_n)$ is bounded by the maximal degree $\delta_n$ of a minimal generator of $\gin_{\le}(I_n)$; see, e.g., \cite[Corollary 2.12]{Gr}. If $(\gin_{\le}(I_n))_{n\ge 1}$ is an $\Inc(\N)^i$-invariant chain, then the sequence $(\delta_n)_{n\ge 1}$ is eventually constant, which yields the desired statement.
\end{proof}

In view of Proposition \ref{mon_orbit}, Theorem \ref{thm_extremal}, and Example \ref{artinian}, the previous proposition suggests that there might not be a satisfactory notion of generic initial chains (of $\Inc(\N)^i$-invariant chains) in general. Nevertheless, one can still expect the existence of generic initial chains of special chains, such as those ones defined by squarefree monomials. We conclude the paper with the following conjecture, which might be of interest from the combinatorial point of view.

\begin{conj}
 \label{gen_squarefree}
 Assume $c=1$. Let $\Icc=(I_n)_{n\ge 1}$ be an $\Inc(\N)^i$-invariant chain of squarefree monomial ideals. Then the chain $(\gin_{\rev}(I_n))_{n\ge 1}$ is also $\Inc(\N)^i$-invariant, where $\rev$ denotes the reverse lexicographic order on $R$.
\end{conj}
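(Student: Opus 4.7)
The natural approach is to reduce to elementary shifts and then exploit the fact that the rev-lex gin of a squarefree monomial ideal is governed by Kalai's symmetric algebraic shifting. First, using the notation of Lemma~\ref{lem_linearform_manysteps}, any $\pi \in \Inc(\N)^i_{m,n}$ factors as a composition of elementary shifts $\sigma_{i'}$ with $i' \ge i$, so it suffices to show, for each $n \ge \ind^i(\Icc)$ and each $i' \ge i$, that
\[
\sigma_{i'}(\gin_{\rev}(I_n)) \subseteq \gin_{\rev}(I_{n+1}).
\]
By iterating this one-step containment (and composing with the trivial inclusion $R_n \hookrightarrow R_{n+1}$), one recovers $\Inc(\N)^i$-invariance of the full chain $(\gin_{\rev}(I_n))_{n \ge 1}$.

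For the one-step statement, I would translate from ideals to simplicial complexes. Write $I_n = I_{\Delta_n}$ for a simplicial complex $\Delta_n$ on $[n]$; we may assume $\ch K = 0$ since $\gin_{\rev}$ is generically Borel-fixed and the property we want is stable under base change of the field. By Aramova--Herzog--Hibi (or the equivalent description via Kalai's algebraic shifting), $\gin_{\rev}(I_{\Delta_n})$ is strongly stable in characteristic zero, and under the standard order-preserving bijection $u = x_{j_1}^{a_1}\cdots x_{j_s}^{a_s} \leftrightarrow u^{\mathrm{sq}} = x_{j_1} x_{j_1+1}\cdots$ between strongly stable and squarefree strongly stable ideals, the squarefree shadow of $\gin_{\rev}(I_{\Delta_n})$ is $I_{\Delta_n^s}$, where $\Delta_n^s$ is the symmetric algebraic shift. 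Crucially, this bijection depends only on the indexing of variables, and hence commutes with each $\sigma_{i'}$. The conjecture therefore reduces to the purely combinatorial assertion
\[
\sigma_{i'}(I_{\Delta_n^s}) \subseteq I_{\Delta_{n+1}^s} \quad \text{for all } n \ge \ind^i(\Icc),\ i' \ge i.
\]

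To prove this combinatorial statement I would try to characterize $\Delta_n^s$ from $\Delta_n$ via a greedy Hilbert-function recipe (e.g.\ using Björner--Kalai type results) and check directly that the recipe is equivariant under $\sigma_{i'}$. An auxiliary input is that the equivariant Hilbert-series theory of \cite{NR17} already pins down the $f$-vectors of $(\Delta_n^s)_{n \ge 1}$ compatibly with the $\Inc(\N)^i$-action, so the ``target sizes'' are in any event consistent with the desired containment. One would then have to show that the actual shifted \emph{generators} of $\Delta_{n+1}^s$ include the image under $\sigma_{i'}$ of those of $\Delta_n^s$, presumably by an induction on degree using that the chain $(I_n)$ itself is generated by $\sigma$-shifting from finitely many generators after $n \ge \ind^i(\Icc)$.

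The main obstacle is the last step. Symmetric algebraic shifting is defined through a generic change of coordinates intrinsic to each $R_n$, and there is no \emph{a priori} coherence between the generic coordinate choices across the tower $R_1 \subset R_2 \subset \cdots$: the generic coordinates realising $\gin_{\rev}(I_n)$ involve only $x_1, \dots, x_n$, while those realising $\gin_{\rev}(I_{n+1})$ involve the new variable $x_{n+1}$ in an essential way. An optimistic resolution would be to identify a single ``universally generic'' coordinate system over $R$ that simultaneously realises every $\gin_{\rev}(I_n)$ and is preserved by $\Inc(\N)^i$; a more pedestrian one is a generator-by-generator combinatorial analysis along the lines of Murai's approach \cite{Mu} to the Betti-table version of Conjecture~\ref{con_c=1}, which should carry strong structural information about $\Delta_n^s$ as $n$ grows.
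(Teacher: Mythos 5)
This statement is Conjecture~\ref{gen_squarefree} in the paper and is explicitly left open, so there is no proof of it in the paper to compare against; your proposal is an attack on an open problem. You are candid that the last step is an obstacle, but there is also a gap earlier in the argument than you acknowledge: the bijection $u \mapsto u^{\mathrm{sq}}$ between strongly stable and squarefree strongly stable monomials does \emph{not} commute with the elementary shifts $\sigma_{i'}$ once $i' \ge 1$ and the monomial involves a variable of index $\le i'$. Concretely, with $u = x_1^2$ and $i' = 1$ one has $\sigma_1(u) = x_1^2$, so $(\sigma_1(u))^{\mathrm{sq}} = x_1 x_2$, whereas $u^{\mathrm{sq}} = x_1 x_2$ and $\sigma_1(u^{\mathrm{sq}}) = x_1 x_3$. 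Since $\gin_{\rev}(I_n)$ is strongly stable, its minimal generators typically do involve $x_1, \ldots, x_i$ (indeed $x_1$), and since the factorization of a general $\pi \in \Inc(\N)^i_{m,n}$ into elementary shifts requires $\sigma_{i'}$ for arbitrary $i' \ge i$ (in particular $i' \ge 1$ even when $i=0$), the equivalence you assert between
\[
\sigma_{i'}\bigl(\gin_{\rev}(I_n)\bigr) \subseteq \gin_{\rev}(I_{n+1})
\quad\text{and}\quad
\sigma_{i'}\bigl(I_{\Delta_n^s}\bigr) \subseteq I_{\Delta_{n+1}^s}
\]
does not hold, and the reduction to the combinatorics of symmetric algebraic shifting breaks down at this step.

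A secondary caution: $\sigma_{i'}(\gin_{\rev}(I_n))$ is in general not a strongly stable ideal (the shift creates gaps in the index set), so even a correct inclusion-preserving correspondence between strongly stable and squarefree strongly stable ideals would not directly transport the containment you need. The genuine difficulty you identify at the end --- the lack of coherence of the generic coordinate changes across the tower $R_1 \subset R_2 \subset \cdots$, since the generic coordinates realising $\gin_{\rev}(I_{n+1})$ essentially involve $x_{n+1}$ --- is the real crux, and looking for an $\Inc(\N)^i$-compatible ``universally generic'' coordinate system, or for a generator-by-generator combinatorial analysis in the spirit of Murai's work on Betti tables, is a sensible direction; but as written the proposal has an unrepaired error before it reaches that crux, and the crux itself remains unresolved.
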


\end{document}